\newcommand\footnoteref[1]{\protected@xdef\@thefnmark{\ref{#1}}\@footnotemark}
\newcommand{\imp}{\rightarrow}
\newcommand{\Pb}{\mathbb{P}}
\newcommand{\Qb}{\mathbb{Q}}
\newcommand{\Acal}{\mathcal{A}}
\newcommand{\Bcal}{\mathcal{B}}
\newcommand{\Fcal}{\mathcal{F}}
\newcommand{\Ucal}{\mathcal{U}}
\newcommand{\Ical}{\mathcal{I}}
\newcommand{\Lcal}{\mathcal{L}}
\newcommand{\Mcal}{\mathcal{M}}
\newcommand{\Pcal}{\mathcal{P}}
\newcommand{\Scal}{\mathcal{S}}
\newcommand{\uh}{{\upharpoonright}}
\renewcommand{\setminus}{\smallsetminus}
\def\qt#1{``#1''}%
\newcommand{\s}[1]{\ensuremath{\sf{#1}}}
\DeclareMathOperator{\rca}{\s{RCA}_0}
\DeclareMathOperator{\piooca}{\Pi^1_1\s{CA}}
\DeclareMathOperator{\aca}{\s{ACA}}
\DeclareMathOperator{\atr}{\s{ATR}}
\DeclareMathOperator{\wkl}{\s{WKL}}
\DeclareMathOperator{\rt}{\s{RT}}
\DeclareMathOperator{\srt}{\s{SRT}}
\DeclareMathOperator{\coh}{\s{COH}}
\newcommand{\qvdash}{\operatorname{{?}{\vdash}}}
\newcommand{\nqvdash}{\operatorname{{?}{\nvdash}}}
\DeclareMathOperator{\dom}{dom}
\newtheoremstyle{custom}
  {10pt}
  {10pt}
  {\normalfont}
  {}
  {\bfseries}
  {}
  { }
  {}
\theoremstyle{custom}
\newtheorem{theorem}{Theorem}[section]
\newtheorem{lemma}[theorem]{Lemma}
\theoremstyle{definition}
\newtheorem{definition}[theorem]{Definition}
\newtheorem*{statement}{Statement}
\theoremstyle{remark}
\newtheorem{remark}[theorem]{Remark}
\newtheorem {question}[theorem]{Question}
\numberwithin{equation}{section}
\newtheoremstyle{noparens}%
  {}{}%
{}{}%
{\bfseries}{.}%
{ }%
{\thmname{#1}\thmnumber{ #2}\thmnote{ #3}}
\theoremstyle{noparens}
\newtheorem*{theorem*}{Theorem}
\title{$\mathsf{SRT}^2_2$ does not imply $\mathsf{RT}^2_2$ in $\omega$-models}
\author{
  Benoit Monin \and Ludovic Patey
}
\begin{document}

\begin{abstract}
We complete a 40-year old program on the computability-theoretic analysis of Ramsey's theorem, starting with Jockusch in 1972, and improving a result of Chong, Slaman and Yang in 2014. Given a set $X$, let $[X]^n$ be the collection of all $n$-element subsets of $X$. Ramsey's theorem for $n$-tuples asserts the existence, for every finite coloring of $[\omega]^n$, of an infinite set $X \subseteq \omega$ such that $[X]^n$ is monochromatic. The meta-mathematical study of Ramsey has a rich history, with several long-standing open problems and seminal theorems, including Seetapun's theorem in 1995 and Liu's theorem in 2012 about Ramsey's theorem for pairs. The remaining question about the study of Ramsey's theorem from a computational viewpoint was the relation between Ramsey's theorem for pairs ($\mathsf{RT}^2_2$) and its restriction to stable colorings ($\mathsf{SRT}^2_2$), that is, colorings admitting a limit behavior. Chong, Slaman and Yang first proved that $\mathsf{SRT}^2_2$ does not formally imply $\mathsf{RT}^2_2$ in a proof-theoretic sense, using non-standard models of reverse mathematics. In this article, we answer the open question whether this non-implication also holds within the framework of computability theory. More precisely, we construct a $\omega$-model of $\mathsf{SRT}^2_2$ which is not a model of $\mathsf{RT}^2_2$. For this, we design a new notion of effective forcing refining Mathias forcing using the notion of largeness classes.
\end{abstract}

\maketitle

\section{Introduction}

In this article, we prove that the restriction of Ramsey's theorem for pairs to stable colorings is not equivalent to its full version over $\omega$-models\footnote{The authors are thankful to Damir Dzhafarov for insightful comments and discussions.}. This answers a major open question of modern  \emph{reverse mathematics}, asked by Cholak, Jockusch and Slaman~\cite{Cholak2001strength} and Chong, Slaman and Yang~\cite{Chong2014metamathematics} and completes the 40-years old program started by Jockusch in 1972 about the analysis of Ramsey's theorem from a computability-theoretic viewpoint. 

\subsection{Reverse mathematics and Ramsey's theorem}

Reverse mathematics is a foundational program started by Harvey Friedman in 1975, whose goal is to find the weakest axioms needed to prove ordinary theorems. It uses the framework of second-order arithmetics, with a base theory, $\rca$, capturing “computable mathematics". The early study of reverse mathematics revealed the existence of four linearly ordered big systems $\wkl$, $\aca$, $\atr$, and $\piooca$ (in increasing order), such that, given an ordinary theorem, it is very likely either to be provable in $\rca$, or provably equivalent to one of the four systems in $\rca$. These systems together with $\rca$ are known as the \qt{Big Five}. By its success in finding the exact axioms needed for the large majority of theorems and its foundational consequences, in particular its partial answer to Hilbert's program of finitistic reductionnism~\cite{Simpson1988Partial}, reverse mathematics is cited among the 100 key breakthroughs in mathematics~\cite{Elwes2013Math}. Among the Big Five, $\wkl$ stands for \qt{weak K\"onig's lemma}, and asserts that every infinite binary tree admits an infinite path, while $\aca$ is the comprehension axiom restricted to arithmetical formulas. $\wkl$ can be thought of as capturing compactness arguments, and $\aca$ is equivalent to the existence, for every set $X$, of the halting set relative to~$X$. See Simpson~\cite{Simpson2009Subsystems} for an introduction to reverse mathematics.

Among the theorems studied in reverse mathematics, Ramsey's theorem received a special attention from the community, since Ramsey's theorem for pairs historically was the first theorem known to escape the Big Five phenomenon. Given a set of integers $X$, $[X]^n$ denotes the set of all $n$-element subsets over $X$. For a coloring $f : [\omega]^n \to k$, a set of integers $H$ is \emph{homogeneous} if $f$ is constant over $[H]^n$.

\begin{statement}[Ramsey's theorem]
$\rt^n_k$: \qt{Every $k$-coloring of $[\omega]^n$ admits an infinite homogeneous set}.
\end{statement}

In particular, $\rt^1_k$ is the infinite pigeonhole principle for $k$-partitions. Ramsey's theorem and its consequences are notoriously hard to analyse from a computability-theoretic viewpoint. Jockusch~\cite{Jockusch1972Ramseys} proved that $\rt^n_k$ is equivalent to $\aca$ whenever $n \geq 3$, thereby showing that $\rt^n_k$ satisfies the Big Five phenomenon. The question of whether $\rt^2_k$ implies $\aca$ was a longstanding open question, until Seetapun~\cite{Seetapun1995strength} proved that $\rt^2_k$ is strictly weaker than $\aca$. Later, Jockusch~\cite{Jockusch1972Ramseys,Jockusch197201} and Liu~\cite{Liu2012RT22} showed that $\rt^2_k$ is incomparable with $\wkl$, and therefore that $\rt^2_k$ is not even linearly ordered with the Big Five. See Hirschfeldt~\cite{Hirschfeldt2015Slicing} for an introduction to the reverse mathematics of Ramsey's theorem.

\subsection{Stable Ramsey's theorem for pairs and cohesiveness}

In order to understand better the computational and proof-theoretic content of Ramsey's theorem for pairs, Cholak, Jockusch and Slaman~\cite{Cholak2001strength} decomposed it into two statements, namely, stable Ramsey's theorem for pairs, and cohesiveness.  A coloring of pairs $f : [\omega]^2 \to k$ is \emph{stable} if for every $x \in \omega$, $\lim_y f(\{x, y\})$ exists. An infinite set $C$ is \emph{cohesive} for a countable sequence of sets $R_0, R_1, \dots$ if $C \subseteq^{*} R_i$ or $C \subseteq^{*} \overline{R}_i$ for every $i \in \omega$, where $\subseteq^*$ means inclusion but for finitely many elements.

\begin{statement}[Stable Ramsey's theorem for pairs]
$\srt^2_k$: \qt{Every stable $k$-coloring of $[\omega]^2$ admits an infinite homogeneous set}.
\end{statement}

\begin{statement}[Cohesiveness]
$\coh$: \qt{Every countable sequence of sets has a cohesive set}.
\end{statement}

Cholak, Jockusch and Slaman~\cite{Cholak2001strength} and Mileti~\cite{Mileti2004Partition} proved the equivalence over $\rca$ between $\rt^2_k$ and $\srt^2_k \wedge \coh$. They naturally wondered whether this decomposition is non-trivial, in the sense that both statements $\srt^2_k$ and $\coh$ are strictly weaker than $\rt^2_k$. Hirschfeldt, Jockusch, Kjoss-Hanssen, Lempp and Slaman~\cite{Hirschfeldt2008strength} partially answered the question by proving that  $\coh$ is strictly weaker than $\rt^2_2$ over $\rca$. The question of whether $\srt^2_2$ implies $\rt^2_2$ over $\rca$ remained a long-standing open question. Since $\rt^2_2$ is equivalent to $\srt^2_2 \wedge \coh$, this is equivalent to the question of whether $\srt^2_2$ implies $\coh$ over $\rca$.

From a computability-theoretic viewpoint, stable Ramsey's theorem for pairs and two $k$ colors is equivalent to combinatorially simpler statement called $\mathsf{D}^2_k$.

\begin{statement}
$\mathsf{D}^n_k$: \qt{For every $\Delta^0_n$ $k$-partition of $\omega$, there is an infinite subset of one of the parts}.
\end{statement}

Chong, Lempp and Yang~\cite{Chong2010role}, proved that the computable equivalence between $\rt^2_k$ and $\mathsf{D}^2_k$ also holds over $\rca$.
The cohesiveness principle also admits a nice computability-theoretic characterization. Jockusch and Stephan~\cite{Jockusch1993cohesive} proved that the sequence of all primitive recursive sets is a maximally difficult computable instance of $\coh$. The cohesive sets for this sequence are called \emph{p-cohesive} and their Turing degrees are precisely the ones whose jump is PA over $\emptyset'$, that is, the degrees whose jump can compute a path through any $\Delta^0_2$ infinite binary tree. The following computability-theoretic question is therefore closely related to the previous question.

\begin{question}
Does every $\Delta^0_2$ set have an infinite subset in it or its complement whose jump is not of PA degree over $\emptyset'$?
\end{question}

One natural approach to separate $\srt^2_2$ from $\rt^2_2$ would be to prove that every $\Delta^0_2$ set admits an infinite subset $G$ in it or its complement of low degree, that is, $G' \leq_T \emptyset'$. However, Downey, Hirschfeldt, Lempp and Solomon~\cite{Downey200102} constructed a $\Delta^0_2$ set with no low infinite subset of it or its complement. Very surprisingly, Chong, Slaman and Yang~\cite{Chong2014metamathematics} answered the $\srt^2_2$ vs $\rt^2_2$ question by constructing a model of $\rca + \srt^2_2$ with only low sets, which is not a model of $\rt^2_2$. The solution to this apparent paradox was the use of a non-standard model of $\rca$ in which $\Sigma^0_2$ induction fails. The sets of this model are low \emph{within the model}, but not low in the meta-theory. The construction of Downey, Hirschfeldt, Lempp and Solomon~\cite{Downey200102} requires $\Sigma^0_2$ induction to be carried out.

Although the proof of Chong, Slaman and Yang~\cite{Chong2014metamathematics} formally separated $\srt^2_2$ from $\rt^2_2$ over $\rca$, the separation was not fully satisfactory, for two reasons. First, it leaves open the question of whether $(\forall k)\srt^2_k$ implies $\rt^2_2$ which as also asked by Cholak, Jockusch and Slaman~\cite{Cholak2001strength}. Indeed, $(\forall k)\srt^2_k$ implies $\Sigma^0_2$ induction, and therefore cannot have any models with only low sets. The second reasons is that the separations of Chong, Slaman and Yang~\cite{Chong2014metamathematics} exploits the failure of a property which is known to hold in standard models. A structure in second-order arithmetics is a tuple $(M, \Scal, 0, 1, +, \times, <)$ where $M$ denotes the set of integers, together with some constants $0$ and $1$, some binary operations $+$ and $\times$ and an order relation $<$. $\Scal$ is a collection of subsets of $M$ representing the second-order part. Among these structures, we are particularly interested in those whose first-order part consists of the standard integers, together with their natural operations. These structures are called \emph{$\omega$-structures}, and are fully specified by their second-order part~$\Scal$. Chong, Slaman and Yang~\cite{Chong2014metamathematics} naturally asked the following question:

\begin{question}
Is every $\omega$-model of $\rca \wedge \srt^2_2$ a model of $\rt^2_2$?
\end{question}

This question had an important impact in the development of reverse mathematics, and computability theory in general, not only by its self interest, but also by range of related questions, new techniques and intellectual emulation it generated in the community. Several articles are dedicated to this question~\cite{Cholak2019Some,Chong2010role,Dorais2016uniform,Dzhafarov2014Cohesive,Dzhafarov2019COH,Dzhafarov2016Strong,Hirschfeldt2016notions,Monin2018Pigeons,Patey2016weakness,Patey2017Controlling} and led to the rediscovery of Weihrauch degrees by Dorais, Dzhafarov, Hirst, Mileti and Shafer~\cite{Dorais2016uniform}, and the design of the computable reduction by Dzhafarov~\cite{Dzhafarov2014Cohesive}. Dzhafarov~\cite{Dzhafarov2014Cohesive,Dzhafarov2016Strong} obtained partial separations by proving that $\coh$ is neither Weihrauch reducible, nor strongly computably reducible to~$\srt^2_2$. The most recent improvement is a proof by Dzhafarov and Patey~\cite{Dzhafarov2019COH} proving that $\coh$ is not Weihrauch reducible to $\srt^2_2$ even when finitely many Turing functionals are allowed.

In this article, we answer the question by separating $\rca \wedge \srt^2_2$ from $\rt^2_2$ on $\omega$-models. For this, we prove the following main theorem.

\begin{theorem}\label{thm:jump-pa-avpoidance-d22}
For every set $Z$ whose jump is not of PA degree over $\emptyset'$ and every $\Delta^{0,Z}_2$ set $A$, there is an infinite subset $G \subseteq A$ or $G \subseteq \overline{A}$ such that $(G \oplus Z)'$ is not of PA degree over $\emptyset'$.
\end{theorem}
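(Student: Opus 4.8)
The plan is to build $G$ by a new variant of Mathias forcing in which the infinite reservoir is replaced by a \emph{largeness class}: a nonempty family $\mathcal{L}$ of infinite sets that is upward closed under $\supseteq$, contains arbitrarily sparse sets, and is closed under the Ramsey-type partition operation, namely, whenever a member of $\mathcal{L}$ is split into finitely many pieces, one of the pieces (suitably truncated) is again in $\mathcal{L}$. A condition is a pair $(\sigma,\mathcal{L})$ where $\sigma$ is a finite stem, $\mathcal{L}$ is a largeness class presented effectively relative to $Z$ all of whose members lie above $\max\sigma$, and $(\sigma,\mathcal{L})$ is compatible with the $\Delta^{0,Z}_2$ approximation of $A$ --- meaning $\sigma$ lies on one of the sides $A$, $\overline{A}$ and $\mathcal{L}$ is large on that side, or, before a side has been committed, $\mathcal{L}$ is large with respect to the partition $\omega = A \sqcup \overline{A}$. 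Extension either enlarges $\sigma$ by an element drawn from a member of $\mathcal{L}$, shrinking $\mathcal{L}$ accordingly, or refines $\mathcal{L}$ to a smaller largeness class. The pigeonhole part of the conclusion --- that the generic $G$ is infinite and lies inside $A$ or inside $\overline{A}$ --- then follows from the partition property: since $\omega = A \sqcup \overline{A}$, every largeness class restricts to a largeness class on one of the two sides, so one may, once forced, commit to that side, keep feeding the stem (possible since largeness classes contain arbitrarily sparse sets), and extract an infinite $G$ on the committed side. That $A$ is only $\Delta^{0,Z}_2$ rather than $Z$-computable is handled by choosing the largeness classes to be simultaneously large for cohesiveness with respect to the columns of the approximation, so that one can always pass to a member on which the approximation has stabilized; this portion parallels the computability-theoretic analysis of $\rt^1_k$ and the Jockusch--Stephan characterization of $p$-cohesiveness.

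The substance is the preservation requirement. By the standard characterization of PA degrees relative to $\emptyset'$, ``$(G\oplus Z)'$ is not of PA degree over $\emptyset'$'' is equivalent to: $(G\oplus Z)'$ computes no $\{0,1\}$-valued function $f$ with $f(e)\neq\Phi_e^{\emptyset'}(e)$ whenever $\Phi_e^{\emptyset'}(e)\downarrow$, and by hypothesis $Z'$ already has this property. Fix a Turing functional $\Gamma$ and, by the Limit Lemma, write $\Gamma^{(G\oplus Z)'}=\lim_s\Psi^{G\oplus Z}(\cdot,s)$; we must densely force one of: $\lim_s\Psi^{G\oplus Z}(e,s)$ diverges for some $e$; it takes a value $\geq 2$ for some $e$; or it stabilizes to $\Phi_e^{\emptyset'}(e)$ for some $e$ with $\Phi_e^{\emptyset'}(e)\downarrow$. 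The dichotomy below a condition $c$ is: either (A) some extension of $c$ forces one of these outcomes, and the requirement is met; or (B) none does, and then I claim $Z'$ is of PA degree over $\emptyset'$, a contradiction. Indeed, in case (B) the generic $\Gamma^{(G\oplus Z)'}$ is total and $\{0,1\}$-valued along a sufficiently generic descent, and --- using that the largeness classes are presented effectively relative to $Z$ --- one extracts a $\{0,1\}$-valued $f\leq_T Z'$ by having $Z'$ search, for each $e$, for an extension of $c$ that fixes $\lim_s\Psi^{G\oplus Z}(e,s)$ to a value in $\{0,1\}$; by case (B) that value is never $\Phi_e^{\emptyset'}(e)$ when the latter halts, so $f$ witnesses that $Z'$ is of PA degree over $\emptyset'$. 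Running the dichotomy over all $\Gamma$ with a suitable bookkeeping and taking a sufficiently generic filter produces the desired $G$.

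The main obstacle --- and the reason largeness classes are needed in place of ordinary Mathias reservoirs --- is twofold. First, we control the \emph{jump} $(G\oplus Z)'$, so disagreements with a candidate $\emptyset'$-diagonally-non-computable function are no longer witnessed by finite stems, and the obstruction ``$\Phi_e^{\emptyset'}(e)\downarrow$'' is genuinely $\Sigma^0_2$; ordinary Mathias forcing cannot keep both candidate values of $\Gamma^{(G\oplus Z)'}(e)$ alive while waiting to see whether $\Phi_e^{\emptyset'}(e)$ ever halts, whereas a largeness-class reservoir can, by carrying the reservoir split along the partition property and deferring the commitment. Second, and relatedly, the escape in case (B) must land inside $Z'$ rather than $Z''$: the forcing relation for the relevant $\Sigma^0_1(G\oplus Z)$ statements, and the value-fixing above, must be decidable from $Z'$ together with the largeness-class data, which dictates delicate constraints on how the classes are presented and refined --- in particular, one must never commit to the $A$-versus-$\overline{A}$ side earlier than the preservation requirements allow, since a premature commitment can block the value-fixing. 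Reconciling all of this is the technical heart of the construction. Finally, to obtain the $\omega$-model separation one iterates the theorem over all $\mathsf{D}^2_2$-instances computable from the sets built so far, closing under join and Turing reducibility; since the jump is monotone, every set in the resulting Turing ideal has jump not of PA degree over $\emptyset'$, so the ideal is an $\omega$-model of $\mathsf{D}^2_2$, hence of $\srt^2_2$ by Chong--Lempp--Yang, containing no $p$-cohesive set, so it fails $\coh$ and therefore $\rt^2_2$.
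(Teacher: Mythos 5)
Your high-level plan — Mathias forcing with largeness-class reservoirs, effectiveness relative to $Z$, and a second-jump control — is indeed the right skeleton, and you correctly identify that the obstacles concern $\Sigma^0_2$/$\Pi^0_2$ forcing and the need to defer commitments. But the two central mechanisms of the actual proof are absent, and the dichotomy you propose in case (B) does not close the argument.

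First, your conditions carry a single stem $\sigma$ and you propose to ``commit to a side once forced.'' This is exactly what the construction cannot do. The forced $\Pi^0_2$ formulas (the partiality outcomes) are not guaranteed to hold on the generic: they hold only on a side where the filter turns out to be $1$-generic, and which side that is cannot be determined at any finite stage. The proof therefore maintains two stems $\sigma^0 \subseteq A$ and $\sigma^1 \subseteq \overline{A}$ throughout, never commits, and identifies the ``valid side'' only a posteriori via the notion of $i$-validity (\Cref{lem:every-condition-valid-side}, \Cref{lem:valid-upward-closed}). A premature commitment, as in your design, would in general land on the wrong side.

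Second, your case (B) argument does not produce a completion $f \leq_T Z'$. You need $Z'$ to find, for each $e$, an extension forcing a specific value $v\in\{0,1\}$ for $\Gamma^{(G\oplus Z)'}(e)$. But ``$\Gamma^{(G\oplus Z)'}(e)\downarrow = v$'' is $\Sigma^0_2(G\oplus Z)$, and there is no reason in case (B) for such an extension to exist: the forcing question can answer negatively for both values $v=0,1$ without forcing divergence or a value $\geq 2$, because the negations of those $\Sigma^0_2$ statements are only $\Pi^0_2$-forced, and two $\Pi^0_2$-forcing refinements of the largeness class need not be jointly satisfiable. This is precisely the obstacle the paper overcomes not by a Seetapun-style contradiction but by Liu's valuation combinatorics (\Cref{lem:combi-liu-valuation}): collect the $\emptyset'$-c.e.\ set $W$ of valuations $p$ with $c \qvdash^i \Gamma^{G'}\not\subseteq p$, and conclude that either $W$ contains an $\emptyset'$-correct valuation (forcing the $\Sigma^0_2$ outcome), or $W$ omits $2u_n+1$ pairwise incompatible valuations (so two $\Pi^0_2$ outcomes $\Gamma^{G'}\subseteq p_a$ and $\Gamma^{G'}\subseteq p_b$ can be forced, giving partiality). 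Forcing both $\Pi^0_2$ statements simultaneously is still not possible within one largeness class, which is why the paper additionally needs multi-branch $\Pb_n$-conditions indexed by $n$-indices $I \lhd \Ical_n$, each carrying its own pair of stems $(\sigma^0_I, \sigma^1_I)$ and a largeness class over a product $\Ical_n \to 2^\omega$; the pigeonhole lemma for indices (\Cref{lem:pigeonhole-indices}) then guarantees a valid branch exists. Without the valuation lemma, the multi-branch structure, and the validity bookkeeping, the argument does not go through.
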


This theorem is based on a second-jump control initially developed by Cholak, Jockusch and Slaman~\cite{Cholak2001strength}, and then successively refined by Wang~\cite{Wang2014Cohesive}, Patey~\cite{Patey2017Controlling} and Monin and Patey~\cite{Monin2018Pigeons}. The techniques are combined with combinatorial ideas of Liu~\cite{Liu2012RT22,Liu2015Cone}.
\Cref{thm:jump-pa-avpoidance-d22} can be iterated to construct an $\omega$-model of $\rca \wedge \srt^2_2$ containing no set whose jump is of PA degree over $\emptyset'$, from which we deduce the following theorem.

\begin{theorem}\label{thm:srt22-coh-omega-models}
There is an $\omega$-model of $\rca \wedge \srt^2_2$ which is not a model of $\rt^2_2$.
\end{theorem}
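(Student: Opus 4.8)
The plan is to obtain \Cref{thm:srt22-coh-omega-models} by iterating \Cref{thm:jump-pa-avpoidance-d22}, using three facts already recalled above: $\srt^2_2$ is equivalent over $\rca$ to the statement $\mathsf{D}^2_2$ (Chong, Lempp and Yang~\cite{Chong2010role}); $\rt^2_2$ is equivalent over $\rca$ to $\srt^2_2 \wedge \coh$ (Cholak, Jockusch and Slaman~\cite{Cholak2001strength}, Mileti~\cite{Mileti2004Partition}); and a set which computes a cohesive set for the sequence of all primitive recursive sets has a jump of PA degree over $\emptyset'$ (Jockusch and Stephan~\cite{Jockusch1993cohesive}). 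Since that sequence is uniformly computable, hence coded in every $\omega$-model of $\rca$, it is enough to build a countable Turing ideal $\Scal$ which is closed under the $\mathsf{D}^2_2$-operation (so that $\Scal \models \rca \wedge \srt^2_2$) and contains no set whose jump is of PA degree over $\emptyset'$ (so that $\Scal \not\models \coh$, whence $\Scal \not\models \rt^2_2$).

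To build $\Scal$, I would construct an increasing sequence $\emptyset = Z_0 \leq_T Z_1 \leq_T \cdots$ of sets maintaining the invariant that $Z_s'$ is not of PA degree over $\emptyset'$ — which holds at $s = 0$ because no set is of PA degree over itself — and set $\Scal = \{\, X : \exists s\ X \leq_T Z_s \,\}$. As the $Z_s$ form a $\leq_T$-chain, $\Scal$ is automatically a nonempty countable Turing ideal, hence an $\omega$-model of $\rca$. Fix a bookkeeping enumerating all pairs $(i,e)$ so that $(i,e)$ is considered only after $Z_i$ has been defined, and every pair is considered eventually. At the step $s$ handling $(i,e)$, with current set $Z_s$, let $A$ be the $\Delta^{0,Z_i}_2$ set coded by $e$ (if $e$ codes no such set, keep $Z_{s+1} := Z_s$ and move on); since $Z_i \leq_T Z_s$, the set $A$ is $\Delta^{0,Z_s}_2$, so \Cref{thm:jump-pa-avpoidance-d22} applied with $Z := Z_s$ yields an infinite $G$ with $G \subseteq A$ or $G \subseteq \overline{A}$ and $(G \oplus Z_s)'$ not of PA degree over $\emptyset'$; put $Z_{s+1} := Z_s \oplus G$, which preserves the invariant.

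It remains to verify that $\Scal \models \mathsf{D}^2_2$ and $\Scal \not\models \coh$. For the former, given $X \in \Scal$ and a $\Delta^{0,X}_2$ set $A$, pick $i$ with $X \leq_T Z_i$; then $A$ is $\Delta^{0,Z_i}_2$ (as $X' \leq_T Z_i'$), hence coded by some $e$ relative to $Z_i$, so the pair $(i,e)$ is handled at a later step and places into $\Scal$ an infinite $G$ with $G \subseteq A$ or $G \subseteq \overline{A}$. Thus $\Scal \models \rca \wedge \srt^2_2$. For the latter, suppose $\Scal \models \coh$; applying $\coh$ to the uniformly computable, hence $\Scal$-coded, sequence of all primitive recursive sets produces a cohesive set $C \in \Scal$ for it, and taking $i$ with $C \leq_T Z_i$ gives $C' \leq_T Z_i'$, so, since $C'$ is of PA degree over $\emptyset'$ by Jockusch--Stephan and PA degrees are upward closed, $Z_i'$ is of PA degree over $\emptyset'$, contradicting the invariant. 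Hence $\Scal \not\models \coh$, and since $\rt^2_2$ is equivalent to $\srt^2_2 \wedge \coh$ over $\rca$, we conclude $\Scal \not\models \rt^2_2$, which is \Cref{thm:srt22-coh-omega-models}.

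The iteration is priority-free bookkeeping and, granting \Cref{thm:jump-pa-avpoidance-d22}, involves no genuine difficulty; the only delicate points are to pass the \emph{current join} $Z_s$ (rather than merely the parameter set $Z_i$ of the instance being handled) as the parameter $Z$ of \Cref{thm:jump-pa-avpoidance-d22}, which is precisely what makes the non-PA-over-$\emptyset'$ property propagate up the tower, and to schedule the instances so that every $\Delta^0_2$ set relative to every eventual member of $\Scal$ is addressed. All of the real work is concentrated in \Cref{thm:jump-pa-avpoidance-d22} itself.
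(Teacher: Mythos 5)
Your proposal is correct and follows essentially the same route as the paper: iterate \Cref{thm:jump-pa-avpoidance-d22} with bookkeeping (always passing the current join $Z_s$ as the parameter) to build a Turing ideal closed under $\mathsf{D}^2_2$ but containing no set whose jump is of PA degree over $\emptyset'$, then invoke Jockusch--Stephan and the decomposition $\rt^2_2 \equiv \srt^2_2 \wedge \coh$ to conclude. The paper states the construction more tersely but it is the same argument.
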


This answers a question of Chong, Slaman and Yang~\cite{Chong2014metamathematics}, but also of Cholak, Jockusch and Slaman~\cite{Cholak2001strength} since any $\omega$-model of $\srt^2_2$ is a model of $(\forall k)\srt^2_k$.

By being an adaptation and generalization of the first and second-jump control of Cholak, Jockusch and Slaman, the nature of the proof of \Cref{thm:jump-pa-avpoidance-d22} supports the idea that this framework is the appropriate one for the computable and proof-theoretic analysis of Ramsey-like theorems. This gives a reasonable hope to prove general decidability theorems on Ramsey's theorem, in the spirit of~\cite{Patey2019Ramsey}

\begin{figure}[htbp]
\begin{center}
\begin{tikzpicture}[x=2.5cm, y=1.3cm,
	node/.style={minimum size=2em},
	impl/.style={draw,very thick,-latex},
	strict/.style={draw, thick, -latex, double distance=2pt},
	nonimpl/.style={draw, very thick, dotted, -latex}]

	\node[node] (piooca) at (2, 6) {$\piooca$};
	\node[node] (atr) at (2, 5) {$\atr$};
	\node[node] (aca) at (2, 4) {$\aca$};
	\node[node] (wkl) at (2, 3) {$\wkl$};
	\node[node] (rca) at (2, 1) {$\rca$};
	\node[node] (rtn2) at (1, 4) {$\rt^n_2$};
	\node[node] (rt22) at (1, 3) {$\rt^2_2$};
	\node[node] (srt22) at (0.5, 2) {$\srt^2_2$};
	\node[node] (coh) at (1.5, 2) {$\coh$};

	\draw[impl] (piooca) -- (atr);
	\draw[impl] (atr) -- (aca);
	\draw[impl] (aca) -- (wkl);
	\draw[impl] (wkl) -- (rca);
	\draw[impl,<->] (aca) -- (rtn2);
	\draw[impl] (rtn2) -- (rt22);
	\draw[impl] (rt22) -- (srt22);
	\draw[impl] (rt22) -- (coh);
	\draw[impl] (coh) -- (rca);
	\draw[impl] (srt22) -- (rca);
\end{tikzpicture}

\end{center}
\caption{Summary diagram of implications between statements over $\rca$, and over $\omega$-models. All the implications are strict, and the missing implications are separations.}
\end{figure}
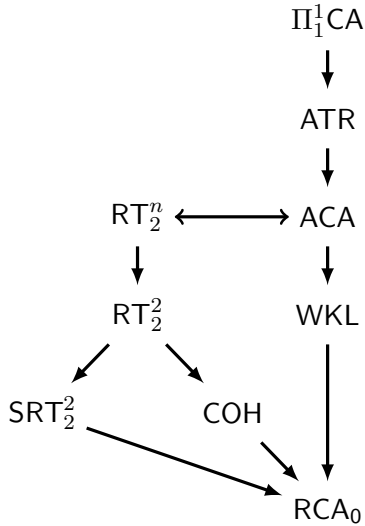

\subsection{Definitions and notation}

A \emph{binary string} is an ordered tuple of bits $a_0, \dots, a_{n-1} \in \{0, 1\}$.
The empty string is written $\epsilon$. A \emph{binary sequence} (or a \emph{real}) is an infinite listing of bits $a_0, a_1, \dots$.
Given $s \in \omega$,
$2^s$ is the set of binary strings of length $s$ and
$2^{<s}$ is the set of binary strings of length $<s$. As well,
$2^{<\omega}$ is the set of binary strings
and $2^{\omega}$ is the set of binary sequences.
Given a string $\sigma \in 2^{<\omega}$, we use $|\sigma|$ to denote its length.
Given two strings $\sigma, \tau \in 2^{<\omega}$, $\sigma$ is a \emph{prefix}
of $\tau$ (written $\sigma \preceq \tau$) if there exists a string $\rho \in 2^{<\omega}$
such that $\sigma \rho = \tau$. Given a sequence $X$, we write $\sigma \prec X$ if
$\sigma = X \uh n$ for some $n \in \omega$.
A binary string $\sigma$ can be interpreted as a finite set $F_\sigma = \{ x < |\sigma| : \sigma(x) = 1 \}$. We write $\sigma \subseteq \tau$ for $F_\sigma \subseteq F_\tau$.
We write $\#\sigma$ for the size of $F_\sigma$.

A \emph{binary tree} is a set of binary strings $T \subseteq 2^{<\omega}$ which is closed downward under the prefix relation. A \emph{path} through $T$ is a binary sequence $P \in 2^\omega$ such that every initial segment belongs to $T$.

A \emph{Turing ideal} $\Ical$ is a non-empty collection of sets which is closed downward under the Turing reduction and closed under the effective join, that is, $(\forall X \in \Ical)(\forall Y \leq_T X) Y \in \Ical$ and $(\forall X, Y \in \Ical) X \oplus Y \in \Ical$, where $X \oplus Y = \{ 2n : n \in X \} \cup \{ 2n+1 : n \in Y \}$. A \emph{Scott set} is a Turing ideal $\Ical$ such that every infinite binary tree $T \in \Ical$ has a path in $\Ical$. In other words, a Scott set is the second-order part of an $\omega$-model of $\rca + \wkl$.
A countable Turing ideal $\Mcal$ is \emph{coded} by a set $X$
if $\Mcal = \{ X_n : n \in \omega \}$ with $X = \bigoplus_n X_n$.
A formula is $\Sigma^0_1(\Mcal)$ (resp.\ $\Pi^0_1(\Mcal)$) if it is $\Sigma^0_1(X)$ (resp.\ $\Pi^0_1(X)$) for some $X \in \Mcal$.

Given two sets $A$ and $B$, we denote by $A < B$ the formula
$(\forall x \in A)(\forall y \in B)[x < y]$.
We write $A \subseteq^{*} B$ to mean that $A - B$ is finite, that is,
$(\exists n)(\forall a \in A)(a \not \in B \imp a < n)$.
A \emph{$k$-cover} of a set $X$ is a sequence of sets $Y_0, \dots, Y_{k-1}$ such that $X \subseteq Y_0 \cup \dots \cup Y_{k-1}$.

\section{Background and sketch of the proof}\label{sect:outline}

In this section, we give a sketch of the proof that every $\Delta^0_2$ set $A$ admits an infinite subset in it or its complement, whose jump is not of PA degree relative to $\emptyset'$. Many claims are formally proven in their full generality in \Cref{sect:delta2-jump-not-pa}. The proof is done by a variant of Mathias forcing with an effective second-jump control, that is, a notion of forcing whose forcing relation for $\Sigma^0_2$ and $\Pi^0_2$ formulas is $\Sigma^0_2$ and $\Pi^0_2$, respectively. In the remainder of this section, fix a $\Delta^0_2$ set $A$ and let $A^0 = A$ and $A^1 = \overline{A}$.

\subsection{First-jump control}

Cholak, Jockusch and Slaman~\cite{Cholak2001strength} designed a notion of forcing for constructing subsets of $A^0$ or $A^1$, with a good first-jump control. This notion of forcing is a variant of Mathias forcing whose conditions are tuples $(\sigma^0, \sigma^1, X)$ where $\sigma^0 \subseteq A^0$ and $\sigma^1 \subseteq A^1$ are finite strings representing the stem of the two sets $G^0 \subseteq A^0$ and $G^1 \subseteq A^1$ that we are building. The set $X \subseteq \omega$ is an infinite set belonging to some fixed Scott set $\Mcal$, and serves as a reservoir of elements to add to the stems $\sigma^0$ and $\sigma^1$. For example, by the low basis theorem, $X$ can be chosen to be of low degree. We furthermore require that $\min X > \max( \sigma^0, \sigma^1)$.  According to the intuition, a condition $d = (\tau^0, \tau^1, Y)$ \emph{extends} a condition $c = (\sigma^0, \sigma^1, X)$ (written $d \leq c$) if the reservoir gets more restrictive, that is, $Y \subseteq X$, and if the stems are extended only with elements coming from the reservoir $X$, that is, $\sigma^i \preceq \tau^i$ and $\tau^i - \sigma^i \subseteq X \cap A^i$. The combinatorics of CJS provide a way to decide $\Sigma^0_1$ formulas without referring to the set $A$ which is computationally too complex for the question.

\begin{definition}
Let $c = (\sigma^0, \sigma^1, X)$ be a condition, $i < 2$ and $\Phi_e(G, x)$ be a $\Delta_0$ formula.
\begin{itemize}
	\item[(a)] $c \Vdash^i (\exists x)\Phi_e(G,x)$ if there is some $x \in \omega$ such that $\Phi_e(\sigma^i, x)$ holds.
	\item[(b)] $c \Vdash^i (\forall x)\neg \Phi_e(G,x)$ if for every $x \in \omega$ and every $\rho \subseteq X$, $\neg \Phi_e(\sigma^i \cup \rho, x)$ holds.
\end{itemize}
\end{definition}

Note that the set $A$ does not appear in the definition of the forcing relation for $\Pi^0_1$ formulas. The forcing relation for $\Sigma^0_1$ and $\Pi^0_1$ formulas is therefore $\Sigma^0_1(X)$ and $\Pi^0_1(X)$, respectively, where $X$ is the reservoir of the condition. There is no reason to consider that either a $\Sigma^0_1$ formula or its negation can be forced on each side of a condition. However, the following forcing question which is at the heart of the CJS combinatorics ensures that this can be achieved on at least one side.

\begin{definition}
Given a condition $c = (\sigma^0, \sigma^1, X)$ and two $\Delta_0$ formulas $\Phi_{e_0}(G, x)$ and $\Phi_{e_1}(G, x)$, define $c \qvdash (\exists x)\Phi_{e_0}(G^0, x) \vee (\exists x)\Phi_{e_1}(G^1, x)$ to hold if for every 2-cover $Z^0 \cup Z^1 = X$, there is some side $i < 2$, some $x \in \omega$ and some finite set $\rho \subseteq Z^i$ such that $\Phi_{e_i}(\sigma^i \cup \rho, x)$ holds.
\end{definition}

The forcing question for $\Sigma^0_1$ formulas is also $\Sigma^0_1(X)$, and satisfies the following property.

\begin{lemma}[Cholak, Jockusch and Slaman~\cite{Cholak2001strength}]\label{lem:forcing-question-first-jump-spec}
Let $c$ be a condition, and $\Phi_{e_0}(G, x)$ and $\Phi_{e_1}(G, x)$ be two $\Delta_0$ formulas.
\begin{itemize}
	\item[(a)] If $c \qvdash (\exists x)\Phi_{e_0}(G^0, x) \vee (\exists x)\Phi_{e_1}(G^1, x)$, then there is some $d \leq c$ and some $i < 2$ such that $d \Vdash^i (\exists x)\Phi_{e_i}(G, x)$.
	\item[(b)] If $c \nqvdash (\exists x)\Phi_{e_0}(G^0) \vee (\exists x)\Phi_{e_1}(G^1)$, then there is some $d \leq c$ and some $i < 2$ such that $d \Vdash^i (\forall x)\neg \Phi_{e_i}(G, x)$.
\end{itemize}
\end{lemma}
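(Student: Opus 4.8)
The plan is to read both items off the definitions; the only genuine work is in item~(b), where one must ensure that the witnessing $2$-cover lies inside the ambient Scott set $\Mcal$. For item~(a), assuming $c \qvdash (\exists x)\Phi_{e_0}(G^0, x) \vee (\exists x)\Phi_{e_1}(G^1, x)$, I would feed into the definition of $\qvdash$ the particular $2$-cover $Z^0 = X \cap A^0$, $Z^1 = X \cap A^1$; this is legitimate because $A^0$ and $A^1$ partition $\omega$, so $Z^0 \cup Z^1 = X$. This yields a side $i < 2$, an integer $x$, and a finite set $\rho \subseteq Z^i = X \cap A^i$ with $\Phi_{e_i}(\sigma^i \cup \rho, x)$. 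Setting $\tau^i = \sigma^i \cup \rho$ (a string, since $\rho \subseteq X$ and $\min X > \max(\sigma^0, \sigma^1)$), $\tau^{1-i} = \sigma^{1-i}$, and $Y = \{\, y \in X : y > \max(\tau^0, \tau^1) \,\}$ (infinite, with $Y \leq_T X$, hence $Y \in \Mcal$), the tuple $d = (\tau^0, \tau^1, Y)$ extends $c$: the reservoir shrank, and $\tau^i - \sigma^i = \rho \subseteq X \cap A^i$. Since $\Phi_{e_i}(\tau^i, x)$ holds, $d \Vdash^i (\exists x)\Phi_{e_i}(G, x)$.

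For item~(b), negating the forcing question produces a $2$-cover $Z^0 \cup Z^1 = X$ that is \emph{bad}, meaning that $\neg\Phi_{e_i}(\sigma^i \cup \rho, x)$ holds for every $i < 2$, every $x$, and every finite $\rho \subseteq Z^i$. First I would replace $(Z^0, Z^1)$ by $(Z^0, Z^1 \setminus Z^0)$, which remains bad and is now a partition of $X$, hence is coded by a coloring $g \in 2^\omega$ via a fixed $X$-computable enumeration of $X$. The set of $g \in 2^\omega$ coding a bad partition of $X$ is a nonempty $\Pi^0_1(X)$ class: for fixed $i$ and finite $\rho$ the predicate $(\exists x)\Phi_{e_i}(\sigma^i \cup \rho, x)$ is $\Sigma^0_1$, so \qt{for all $i$ and all finite $\rho \subseteq g^{-1}(i)$, $\neg(\exists x)\Phi_{e_i}(\sigma^i \cup \rho, x)$} is $\Pi^0_1(g \oplus X)$ uniformly in $g$, and nonemptiness is what the previous sentence provides.

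The one delicate step, where the Scott set hypothesis is essential, is to extract such a $g$ inside $\Mcal$: a nonempty $\Pi^0_1(X)$ class with $X \in \Mcal$ has a member in $\Mcal$, since it equals $[T]$ for an $X$-computable (hence $\Mcal$-coded) infinite binary tree $T$, through which the Scott set $\Mcal$ contains a path. This produces a bad partition $(Z^0, Z^1)$ of $X$ with $Z^0, Z^1 \in \Mcal$; one of them, say $Z^i$, is infinite, so $d = (\sigma^0, \sigma^1, Z^i)$ is a condition (indeed $Z^i \subseteq X$, so $\min Z^i > \max(\sigma^0, \sigma^1)$) extending $c$ (only the reservoir changed), and $d \Vdash^i (\forall x)\neg\Phi_{e_i}(G, x)$ because $\neg\Phi_{e_i}(\sigma^i \cup \rho, x)$ holds for every $x$ and every $\rho \subseteq Z^i$, by badness. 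Items~(a) and~(b) together give the lemma. I expect the bookkeeping that turns a bad $2$-cover into a path through an $\Mcal$-coded tree to be the main obstacle; everything else is a direct unwinding of the definitions.
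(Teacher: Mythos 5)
Your proof is correct and follows the same route as the paper: item~(a) plugs the $A$-partition into the forcing question and truncates the reservoir, and item~(b) packages the bad $2$-covers of $X$ as a nonempty $\Pi^{0,X}_1$ class and extracts a member in $\Mcal$ via the Scott set property. The only difference is cosmetic — you explicitly normalize the $2$-cover to a partition and code it as a coloring before invoking the tree, whereas the paper simply asserts that the class of bad $2$-covers is $\Pi^{0,X}_1$ and applies the Scott set property directly.
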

\begin{proof}
Suppose $c \qvdash (\exists x)\Phi_{e_0}(G^0, x) \vee (\exists x)\Phi_{e_1}(G^1, x)$ holds. Then letting $Z^0 = X \cap A^0$ and $Z^1 = X \cap A^1$, there is some side $i < 2$, some $x \in \omega$ and some finite set $\rho \subseteq X \cap A^i$ such that $\Phi_{e_i}(\sigma^i \cup \rho)$ holds. The condition $d = (\sigma^i \cup \rho, \sigma^{1-i}, X \cap (\max \rho, \infty))$ is an extension of $c$ such that $d \Vdash^i (\exists x)\Phi_{e_i}(G, x)$.

Suppose now that $c \nqvdash (\exists x)\Phi_{e_0}(G^0, x) \vee (\exists x)\Phi_{e_1}(G^1, x)$. Let $\Pcal$ be the collection of all the sets $Z^0 \oplus Z^1$ with $Z^0 \cup Z^1 = X$ such that for every $i < 2$, every $x \in \omega$ and every finite set $\rho \subseteq Z^i$, $\Phi_{e_i}(\sigma^i \cup \rho, x)$ does not hold. By assumption, $\Pcal$ is a non-empty $\Pi^{0,X}_1$ class, so since $X$ belongs to the Scott set $\Mcal$, there is some 2-cover $Z^0 \cup Z^1 = X$ such that $Z^0 \oplus Z^1 \in \Pcal \cap \Mcal$. Let $i < 2$ be such that $Z^i$ is infinite. Then the condition $d = (\sigma^0, \sigma^1, Z^i)$ is an extension of $c$ such that $d \Vdash^i (\forall x)\neg \Phi_{e_i}(G, x)$.
\end{proof}

By a pairing argument (if for every pair $m, n \in \omega$, $m \in A$ or $n \in B$, then $A = \omega$ or $B = \omega$), if a filter $\Fcal$ is sufficiently generic, there is some side $i$
such that for every $\Sigma^0_1$ formula $\varphi(G)$, there is some $c \in \Fcal$ such that $c \Vdash^i \varphi(G)$ or $c \Vdash^i \neg \varphi(G)$.
Note that in the proof of \Cref{lem:forcing-question-first-jump-spec}, the new reservoir refining $X$ is either $X$ truncated by finitely many elements, or in the form $X \cap Z^0$ or $X \cap Z^1$ for some 2-cover $Z^0 \cup Z^1 = \omega$ such that $Z^0 \oplus Z^1 \in \Mcal$.

\subsection{Second-jump control}

The forcing relation for $\Sigma^0_2$ formulas can be defined inductively by stating that $c \Vdash^i (\exists x)(\forall y)\Phi_e\allowbreak(G, x, y)$ if $c \Vdash^i (\forall y)\Phi_e(G, x, y)$ for some $x \in \omega$. The relation $c \Vdash^i (\forall y)\Phi_e(G, x, y)$ is $\Pi^0_1(X)$ where $X$ is the reservoir of $c$. In particular, whenever $X$ is low, then the relation  $c \Vdash^i (\exists x)(\forall y)\Phi_e(G, x, y)$ is $\Sigma^0_2$.

The definition of a forcing relation for $\Pi^0_2$ formulas is more problematic. A $\Pi^0_2$ formula $(\forall x)(\exists y)\neg \Phi_e(G, x, y)$ can bee seen as a collection of $\Sigma^0_1$ properties $\langle (\exists y)\neg \Phi_e(G, x, y) : x \in \omega \rangle$ which all need to be forced. It is usually impossible to force infinitely many $\Sigma^0_1$ properties simultaneously, and one has to satisfy them one by one, by proving that for every $x \in \omega$, the set of collections forcing $(\exists y)\neg \Phi_e(G, x, y)$ is dense below $c$. The forcing relation $c \Vdash^i (\forall x)(\exists y)\neg \Phi_e(G, x, y)$ is therefore naturally defined by the statement
	$$
	(\forall x \in \omega)(\forall c_1 \leq c)(\exists c_2 \leq c_1) c_2 \Vdash^i (\exists y)\neg \Phi_e(G, x, y)
	$$
	This definition of the forcing relation for $\Pi^0_2$ formulas is however too complex from a computational viewpoint. The main complexity comes from the description of the reservoir refinement, saying \qt{there exists an infinite set $Y \in \Mcal$ such that $Y \subseteq X$.}

Cholak, Jockusch and Slaman~\cite{Cholak2001strength} went around this problem by observing that the only operations needed on the reservoirs to provide a good first-jump control were truncation and splitting. In some sense, the notion of forcing should not be considered as a variant of Mathias forcing since the refinement operation of the reservoirs does not need to be the arbitrary subset relation. The relation $c = (\sigma^0, \sigma^1, X) \Vdash^i (\forall x)(\exists y)\neg \Phi_e(G, x, y)$ is then translated into the sentence \qt{For every $x \in \omega$, every $\tau^0$ and $\tau^1$ extending $\sigma^0$ and $\sigma^1$ with elements from $X \cap A^0$ and $X \cap A^1$, respectively, the collection of reservoirs $Y$ such that $(\tau^0, \tau^1, Y) \not\Vdash (\forall y)\Phi_e(G, x, y)$ is \emph{large}}, for some notion of largeness which enables truncation and splitting. We now give a modern presentation of the notion of forcing designed by Cholak, Jockusch and Slaman with a good second-jump control.

\begin{definition}
A class $\Acal \subseteq 2^\omega$ is a \emph{largeness class} if it satisfies the following properties:
\begin{itemize}
	\item[(1)] For every $X \in \Acal$ and $Y \supseteq X$, $Y \in \Acal$
	\item[(2)] For every $k \in \omega$ and every $X_0 \cup \dots \cup X_{k-1} \supseteq \omega$, there is some $j < k$ such that $X_j \in \Acal$
\end{itemize}
\end{definition}

Fix a countable Scott set $\Mcal = \{X_0, X_1, \dots \}$ coded by a set $M$ of low degree, that is, $M = \bigoplus_i X_i$ and $M' \leq_T \emptyset'$. Such a Scott set exists by the Low Basis theorem~\cite{Jockusch197201}.
Fix a uniformly $M$-computable enumeration $\Ucal_0, \Ucal_1, \dots$ of all the upward-closed $\Sigma^{0,X}_1$ sub-classes of $2^\omega$ for every $X \in \Mcal$. We are particularly interested in largeness classes in the form $\Ucal_C = \bigcap_{e \in C} \Ucal_e$ for some $\Delta^0_2$ set of indices $C \subseteq \omega$.
Given an infinite set $X \in \Mcal$, we let $\Lcal_X$ be the largeness class of all $Z \subseteq \omega$ such that $Z \cap X$ is infinite. 

Let us enrich the previous notion of forcing with a $\Delta^0_2$ set of indices $C \subseteq \omega$ representing a largeness class of the form $\Ucal_C$ and to which the reservoir must belong. A forcing condition is therefore a tuple $(\sigma^0, \sigma^1, X, C)$ where $\sigma^0 \subseteq A^0$, $\sigma^1 \subseteq A^1$, $X$ is an infinite set belonging to $\Mcal$ (in particular of low degree), $C \subseteq \omega$ is a $\Delta^0_2$ set such that $\Ucal_C$ is a largeness class, and $\Ucal_C \subseteq \Lcal_X$.

\begin{remark}
Instead of asking $X \in \Ucal_C$, we require the stronger fact that $\Ucal_C \subseteq \Lcal_X$. Since $X$ is of low degree, $\Lcal_X$ can be put in the form $\Ucal_C$ for some $\Delta^0_2$ set $C \subseteq \omega$. The requirement $X \in \Ucal_C$ is not strong enough, as there might be some cover $Y_0 \cup \dots Y_{k-1} \supseteq X$ such that $Y_j \not \in \Ucal_C$ for every $j < k$. There might also be some $D \supseteq C$ such that $\Ucal_D$ is a largeness sub-class of~$\Ucal_C$, but $X \not \in \Ucal_D$. By requiring that $\Ucal_C \subseteq \Lcal_X$, we ensure that every largeness subclass $\Ucal_D \subseteq \Ucal_C$ is a largeness class \emph{within $X$}, that is, for every $k$ and every $k$-cover $Y_0 \cup\dots \cup Y_{k-1} \supseteq X$, there is some $j < k$ such that $Y_j \in \Ucal_D$. In particular $X \in \Ucal_D$.
\end{remark}

A condition $(\tau^0, \tau^1, Y, D)$ extends $(\sigma^0, \sigma^1, X, C)$ if $(\tau^0, \tau^1, Y)$ extends $(\sigma^0, \sigma^1, X)$ as before, except that $D \supseteq C$, which means that $\Ucal_D \subseteq \Ucal_C$.
The forcing relation for $\Sigma^0_1$ and $\Pi^0_1$ formulas is left unchanged. In particular, it does not depend on the largeness class $\Ucal_C$.

\begin{definition}
Given a $\Delta_0$ formula $\Phi_e(G, x, y)$, a finite set $\sigma \in 2^{<\omega}$ and some integer $x \in \omega$, let $\zeta(e, \sigma, x)$ be an index of the $\Sigma^0_1$ class
$$
\Ucal_{\zeta(e, \sigma, x)} = \{ X : (\exists \rho \subseteq X)(\exists y \in \omega) \neg \Phi_e(\sigma \cup \rho, x, y) \}
$$
\end{definition}

In other words, $\Ucal_{\zeta(e, \sigma, x)}$ is the collection of all reservoirs $X$ such that the Mathias condition $(\sigma, X)$ does not force $(\forall y)\Phi_e(G, x, y)$. We can now define a forcing relation for $\Sigma^0_2$ and $\Pi^0_2$ formulas whose complexities are $\Sigma^0_2$ and $\Pi^0_2$, respectively.

\begin{definition}
Let $c = (\sigma^0, \sigma^1, X, C)$ be a condition, $i < 2$ and $\Phi_e(G, x, y)$ be a $\Delta_0$ formula.
\begin{itemize}
	\item[(a)] $c \Vdash^i (\exists x)(\forall y)\Phi_e(G,x, y)$ if there is some $x \in \omega$ such that $c \Vdash^i (\forall y)\Phi_e(G, x, y)$ holds.
	\item[(b)] $c \Vdash^i (\forall x)(\exists y)\neg \Phi_e(G,x, y)$ if for every $x \in \omega$ and every $\rho \subseteq X \cap A^i$, $\zeta(e, \sigma^i \cup \rho, x) \in C$.
\end{itemize}
\end{definition}

The interpretation of the forcing relation for $\Sigma^0_2$ formulas is immediate. The forcing relation for a $\Pi^0_2$ formula $(\forall x)(\exists y)\neg \Phi_e(G, x, y)$ ensures that for every $x \in \omega$ and every extension $d \leq c$, $d \not \Vdash^i (\forall y)\Phi_e(G, x, y)$. Indeed, if $d = (\tau^0, \tau^1, Y, D)$, then for every $x \in \omega$, $Y \in \Ucal_D \subseteq \Ucal_C \subseteq \Ucal_{\zeta(e, \tau^i, x)}$. 

\begin{remark}
Note that if a condition $c$ forces a $\Sigma^0_2$ formula on a side $i < 2$, then the formula will hold on $G^i = \bigcup \{ \sigma^i : (\sigma^0, \sigma^1, X, C) \in \Fcal \}$ for every filter $\Fcal$ containing $c$.
On the other hand, if $c$ forces a $\Pi^0_2$ formula on side $i$,
then the filter $\Fcal$ must be sufficiently generic for the property to hold on $G^i$. More precisely, the forcing relation for a formula $(\forall x)(\exists y)\neg \Phi_e(G, x, y)$ states that for every $x \in \omega$, the formula $(\forall y)\Phi_e(G, x, y)$ will never be forced. One can deduce that $(\exists y)\neg \Phi_e(G^i, x, y)$ whenever the side $i$ is 1-generic, meaning that every $\Sigma^0_1$ formula or its negation is forced on side~$i$. However, the disjunctive nature of the first-jump control guarantees only that at least one of the sides will be 1-generic. Therefore, one can ensure only on one side that the forced $\Pi^0_2$ formulas will actually hold.
\end{remark}

Because of the assumption that $A$ is $\Delta^0_2$, we can design a non-disjunctive forcing question for $\Sigma^0_2$ formulas, which will be $\Sigma^0_2$. This enables us to prove that for \emph{each} side, the set of conditions forcing a $\Sigma^0_2$ formula or its negation is dense. However, by the previous remark, the forced properties are only guaranteed to hold on one side.

\begin{definition}
Let $c = (\sigma^0, \sigma^1, X, C)$ be a condition, $i < 2$ and $\Phi_e(G, x, y)$ be a $\Delta_0$ formula.
Let $c \qvdash^i (\exists x)(\forall y)\Phi_e(G, x, y)$ hold if
$$
\Ucal_C \cap \bigcap \{ \Ucal_{\zeta(e, \sigma^i \cup \rho, x)} : x \in \omega, \rho \subseteq X \cap A^i \}
$$
is not a largeness class.
\end{definition}

As we will see in \Cref{lem:decreasing-largeness-yields-largeness}, the forcing question holds if and only if there is a finite set $F \subseteq X$ and some $n \in \omega$ such that the class
$\Ucal_F \cap \bigcap \{ \Ucal_{\zeta(e, \sigma^i \cup \rho, x)} : x < n, \rho \subseteq X \cap A^i \uh n \}$ is not a largeness class. Note that the class is $\Sigma^{0,Z}_1$ for some $Z \in \Mcal$. A complexity analysis for the forcing question shows that not being a largeness class for a $\Sigma^{0,Z}_1$ class is $\Sigma^{0,Z}_2$ (see \Cref{lem:largeness-class-complexity}), hence $\Sigma^0_2$ whenever $Z$ is low. The forcing relation enjoys the following lemma, which shows in particular that every $\Sigma^0_2$ formula or its complement can be forced in \emph{each} side.

\begin{lemma}\label{lem:question-second-jump-simple}
Let $c$ be a condition, $i < 2$ and $\Phi_e(G, x, y)$ be a $\Delta_0$ formula.
\begin{itemize}
	\item[(1)] If $c \qvdash^i (\exists x)(\forall y)\Phi_e(G, x, y)$, then there is a $d \leq c$ such that $d \Vdash^i (\exists x)(\forall y)\Phi_e(G, x, y)$.
	\item[(2)] If $c \nqvdash^i (\exists x)(\forall y)\Phi_e(G, x, y)$, then there is a $d \leq c$ such that $d \Vdash^i (\forall x)(\exists y)\neg\Phi_e(G, x, y)$.
\end{itemize}
\end{lemma}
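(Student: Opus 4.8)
The plan is to treat the two clauses of the lemma separately. Clause~(2) is essentially bookkeeping, while clause~(1) carries the combinatorial content. For clause~(2), suppose $c \nqvdash^i (\exists x)(\forall y)\Phi_e(G,x,y)$, so that by definition the class $\Ucal_C \cap \bigcap\{\Ucal_{\zeta(e,\sigma^i\cup\rho,x)} : x\in\omega,\ \rho\subseteq X\cap A^i\}$ is a largeness class. I would simply adjoin the relevant indices to $C$, putting $D = C \cup \{\zeta(e,\sigma^i\cup\rho,x) : x\in\omega,\ \rho\subseteq X\cap A^i\}$, so that $\Ucal_D$ equals this class. Then $D\supseteq C$; the set $D$ is $\Delta^0_2$ (since $A$ is $\Delta^0_2$ and $X$ is low, so $X\cap A^i$ is $\Delta^0_2$, and one may take $\zeta$ injective in its last two arguments); $\Ucal_D$ is a largeness class by hypothesis; and $\Ucal_D\subseteq\Ucal_C\subseteq\Lcal_X$. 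Hence $d = (\sigma^0,\sigma^1,X,D)$ is a condition extending $c$, and it forces $(\forall x)(\exists y)\neg\Phi_e(G,x,y)$ on side $i$ directly by the definition of $D$.

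For clause~(1), assume $c \qvdash^i (\exists x)(\forall y)\Phi_e(G,x,y)$, i.e.\ the class $\Ucal_C \cap \bigcap\{\Ucal_{\zeta(e,\sigma^i\cup\rho,x)} : x\in\omega,\ \rho\subseteq X\cap A^i\}$ is not a largeness class. By \Cref{lem:decreasing-largeness-yields-largeness} I may fix a finite $F\subseteq C$ and an $n\in\omega$ so that the finite intersection $\Vcal := \Ucal_F \cap \bigcap\{\Ucal_{\zeta(e,\sigma^i\cup\rho,x)} : x<n,\ \rho\subseteq X\cap A^i\uhr n\}$ is not a largeness class. This $\Vcal$ is $\Sigma^{0,Z}_1$ for some $Z\in\Mcal$, so for a suitable $k$ the set of colorings $g:\omega\to k$ witnessing the non-largeness of $\Vcal$ is a nonempty $\Pi^{0,Z}_1$ subclass of $k^\omega$; since $\Mcal$ is a Scott set containing $Z$, I would fix such a $g\in\Mcal$ and put $Z_l = g^{-1}(l)$, obtaining a $k$-cover $Z_0,\dots,Z_{k-1}$ of $\omega$ with $\bigoplus_{l<k}Z_l\in\Mcal$ and no $Z_l\in\Vcal$.

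The crux is to extract from this cover a single witness serving as the new stem and reservoir. Since $\{Z_l\}_{l<k}$ covers $X$ and, by the remark on $\Ucal_C\subseteq\Lcal_X$, $\Ucal_C$ is a largeness class within $X$, the set $L=\{l:Z_l\in\Ucal_C\}$ is nonempty. I claim some $l_0\in L$ has $\Ucal_C$ large within $Z_{l_0}\cap X$: otherwise, for each $l\in L$ choose a finite cover of $Z_l\cap X$ with no member in $\Ucal_C$, note that for $l\notin L$ the set $Z_l\cap X$ is itself outside $\Ucal_C$ (as $\overline{\Ucal_C}$ is downward closed), and combine all these to get a finite cover of $X$ with no member in $\Ucal_C$, contradicting largeness of $\Ucal_C$ within $X$. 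Fix such an $l_0$. Since $Z_{l_0}\in\Ucal_C\subseteq\Ucal_F$ but $Z_{l_0}\notin\Vcal$, there are $x_0<n$ and $\rho_0\subseteq X\cap A^i\uhr n$ with $Z_{l_0}\notin\Ucal_{\zeta(e,\sigma^i\cup\rho_0,x_0)}$. I would then set $\tau^i = \sigma^i\cup\rho_0$, $\tau^{1-i}=\sigma^{1-i}$, $Y = Z_{l_0}\cap X\cap(\max\rho_0,\infty)$, and take $D'$ a $\Delta^0_2$ index with $\Ucal_{D'}=\Ucal_C\cap\Lcal_Y$ (available since $Y$ is low, so $\Lcal_Y=\Ucal_E$ for some $\Delta^0_2$ set $E$ and one may take $D'=C\cup E$). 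A short verification gives: $Y$ is an infinite set of $\Mcal$; $\Ucal_C$ is large within $Y$ (it is large within $Z_{l_0}\cap X$, of which $Y$ is a finite modification, and singletons adjoined to a cover of $Z_{l_0}\cap X$ cannot lie in $\Ucal_C\subseteq\Lcal_X$), so $\Ucal_{D'}$ is a largeness class contained in $\Lcal_Y$; and $d=(\tau^0,\tau^1,Y,D')$ is a condition with $d\leq c$. Finally $Y\subseteq Z_{l_0}\notin\Ucal_{\zeta(e,\tau^i,x_0)}$ and that class is upward closed, so $Y\notin\Ucal_{\zeta(e,\tau^i,x_0)}$, which is exactly $d\Vdash^i(\forall y)\Phi_e(G,x_0,y)$, hence $d\Vdash^i(\exists x)(\forall y)\Phi_e(G,x,y)$.

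I expect the main obstacle to be precisely this extraction step: turning a witness to the non-largeness of $\Vcal$ into a reservoir that is at once small enough to force the $\Pi^0_1$ kernel $(\forall y)\Phi_e(G,x_0,y)$ and large enough to still admit a valid largeness subclass of $\Ucal_C$. The cover-combining argument — showing that largeness of $\Ucal_C$ \qt{within $X$} descends to one of the pieces $Z_{l_0}\cap X$ — is the key point; the remaining ingredients (the Scott-set argument placing the witnessing cover inside $\Mcal$, and the complexity bookkeeping keeping $D$ and $D'$ at level $\Delta^0_2$) are routine.
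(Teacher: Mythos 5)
Your proof is correct and follows essentially the same route as the paper's. Clause~(2) matches exactly: adjoin the indices $\zeta(e,\sigma^i\cup\rho,x)$ to $C$ and observe that the resulting $D$ is $\Delta^0_2$. For clause~(1), the skeleton is also the same: pass to a finite intersection via \Cref{lem:decreasing-largeness-yields-largeness}, obtain a $k$-cover in $\Mcal$ witnessing non-largeness, pick the right piece $Z_{l_0}$, extend the stem by a $\rho_0$ read off from the fact that $Z_{l_0}$ escapes $\Ucal_{\zeta(e,\sigma^i\cup\rho_0,x_0)}$, and take $\Ucal_{D'}=\Ucal_C\cap\Lcal_Y$. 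The one place you diverge in presentation is the selection of the good piece of the cover: the paper invokes \Cref{lem:infinite-intersection-largeness} (every largeness class $\Acal$ has a large subclass of $W$ with $\Acal\cap\Lcal_W$ large) together with \Cref{lem:intersection-compatibility} to get that $\Ucal_C\cap\Lcal_{X\cap Y_j}$ is large for some $j$, whereas you re-derive this by hand with the cover-combining argument (if every $Z_l\cap X$ with $Z_l\in\Ucal_C$ admitted a bad cover, splicing those covers together with the remaining $Z_l\cap X$ would give a bad cover of $X$). That is exactly the content of \Cref{lem:infinite-intersection-largeness}, unfolded in the special case at hand, so there is no genuine methodological difference — you have simply inlined the lemmas. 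Your final step (from \qt{$\Ucal_C$ large within $Z_{l_0}\cap X$} to \qt{$\Ucal_C\cap\Lcal_Y$ is a largeness class}) is stated somewhat tersely; to make it airtight you should split a given cover of $\omega$ along $Z_{l_0}\cap X$ and apply \qt{large within $Z_{l_0}\cap X$} to the trace, then use upward closure and $\Ucal_C\subseteq\Lcal_X$ to ensure the piece in $\Ucal_C$ meets $Y$ infinitely, which is precisely what \Cref{lem:intersection-compatibility} packages. But the substance is right.
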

\begin{proof}
Say $c = (\sigma^0, \sigma^1, X, C)$.

Case 1: $c \qvdash^i (\exists x)(\forall y)\Phi_e(G, x, y)$. Then there is a finite set $F \subseteq C$ and some $n \in \omega$ such that the class $\Ucal_F \cap \bigcap \{ \Ucal_{\zeta(e, \sigma^i \cup \rho, x)} : x < n, \rho \subseteq X \cap A^i \uh n \}$ is not a largeness class. Since the class is $\Sigma^{0,Z}_1$ for some $Z$ belonging to the Scott set $\Mcal$, there is a $k$-cover $Y_0 \cup \dots \cup Y_{k-1} = \omega$ belonging to $\Mcal$ such that for every $j < k$, $Y_j \not\in \Ucal_F \cap \bigcap \{ \Ucal_{\zeta(e, \sigma^i \cup \rho, x)} : x < n, \rho \subseteq X \cap A^i \uh n \}$. Let $j < k$ be such that $\Ucal_C \cap \Lcal_{X \cap Y_j}$ is a largeness class (see \Cref{lem:intersection-compatibility}). In particular, there is some $x \in \omega$ and some $\rho \subseteq X \cap A^i$  such that $Y_j \not \in \Ucal_{\zeta(e, \sigma^i \cup \rho, x)}$, hence $(\sigma^i \cup \rho, x) \Vdash (\forall y)\Phi_e(G, x, y)$. Let $D \supseteq C$ be a $\Delta^0_2$ set such that $\Ucal_D = \Ucal_C \cap \Lcal_{X \cap Y_j}$. The condition $(\sigma^i \cup \rho, \sigma^{1-i}, X \cap Y_j - \{0, \dots, \max \rho\}, D)$ is the desired extension.

Case 2: $c \nqvdash^i (\exists x)(\forall y)\Phi_e(G, x, y)$. Then let $D = C \cup \bigcup \{ \zeta(e, \sigma^i \cup \rho, x) : x \in \omega, \rho \subseteq X \cap A^i \}$. The condition $(\sigma^0, \sigma^1, X, D)$ is the desired extension.
\end{proof}

One can use the combinatorics of largeness classes to provide a more direct proof that all the forced $\Pi^0_2$ formulas must hold on one of the sides for every sufficiently generic filter. We say that a condition $c = (\sigma^0, \sigma^1, X, C)$ is \emph{$i$-valid} for some $i < 2$ if $X \cap A^i \in \Ucal_C$. Since either $X \cap A^0$ or $X \cap A^1$ belongs to $\Ucal_C$, every condition is $i$-valid for at least one $i < 2$. Moreover, $i$-validity is upward-closed under the extension relation, that is, if a condition $d$ is $i$-valid and $d \leq c$, then $c$ is $i$-valid as well. Therefore, for every filter $\Fcal$, there is at least one side $i < 2$ such that every condition is $i$-valid. When a condition $c$ is $i$-valid, one can make some progress on $\Sigma^0_1$ formulas on the $i$-th side, as states the following lemma.

\begin{lemma}
For every $i$-valid condition $c = (\sigma^0, \sigma^1, X, C)$
and every $\zeta(e, \sigma^i, x) \in C$, there is an extension $d \leq c$ such that $d \Vdash^i (\exists y)\neg \Phi_e(G, x, y)$.
\end{lemma}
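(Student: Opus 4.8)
The plan is to read off an explicit witness directly from the $i$-validity hypothesis, and then freeze it into the stem on side $i$ without touching the index set $C$. First I would note that, since $c$ is $i$-valid, $X \cap A^i \in \Ucal_C$, and since $\zeta(e, \sigma^i, x) \in C$ we have $\Ucal_C = \bigcap_{j \in C} \Ucal_j \subseteq \Ucal_{\zeta(e, \sigma^i, x)}$; hence $X \cap A^i \in \Ucal_{\zeta(e, \sigma^i, x)}$. Unwinding the definition of $\zeta(e, \sigma^i, x)$, this yields a finite set $\rho \subseteq X \cap A^i$ and some $y \in \omega$ with $\neg \Phi_e(\sigma^i \cup \rho, x, y)$; this pair $(\rho, y)$ is the witness around which the extension will be built.

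Next I would take $d = (\tau^0, \tau^1, Y, C)$ with $\tau^i = \sigma^i \cup \rho$, $\tau^{1-i} = \sigma^{1-i}$, $Y = X \cap (\max \rho, \infty)$, and the \emph{same} index set $C$, and check that $d$ is a condition with $d \leq c$. The stem on side $i$ has been enlarged only by elements of $X \cap A^i$; the reservoir $Y$ is infinite and $\leq_T X$, hence lies in $\Mcal$; and $C \supseteq C$, so the index set is still a $\Delta^0_2$ set whose associated class is a largeness class. The one clause deserving a word is $\Ucal_C \subseteq \Lcal_Y$: since $Y$ differs from $X$ by only finitely many elements, $\Lcal_Y = \Lcal_X$, so the inclusion $\Ucal_C \subseteq \Lcal_X$ carried by $c$ transfers verbatim. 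Finally, $d \Vdash^i (\exists y)\neg \Phi_e(G, x, y)$ follows immediately from clause~(a) of the (unchanged) forcing relation for $\Sigma^0_1$ formulas, applied with the $\Delta_0$ formula $\neg\Phi_e(G, x, \cdot)$: indeed $\neg \Phi_e(\tau^i, x, y) = \neg\Phi_e(\sigma^i \cup \rho, x, y)$ holds by choice of $\rho$ and $y$.

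I do not expect a real obstacle here. The only mildly delicate point is the contrast with Case~1 of \Cref{lem:question-second-jump-simple}: there, extracting a witness forced us to split the reservoir and therefore to pass to a largeness \emph{sub}class $\Ucal_D \subsetneq \Ucal_C$; here, $i$-validity already hands us a witness living inside $X \cap A^i$, so a finite truncation of the reservoir suffices and $C$ can be left untouched. Beyond that, the proof is the routine bookkeeping that $d$ is a legitimate condition, which comes down to the finiteness of the truncation as above.
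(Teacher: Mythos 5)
Your proof is correct and follows the paper's argument essentially verbatim: extract the witness pair $(\rho, y)$ from $X \cap A^i \in \Ucal_C \subseteq \Ucal_{\zeta(e,\sigma^i,x)}$, fold $\rho$ into the stem on side $i$, truncate the reservoir past $\max\rho$, and keep $C$ unchanged. The extra verification you supply (that $\Ucal_C \subseteq \Lcal_Y$ survives finite truncation, i.e.\ \Cref{lem:lcal-robust-finite-changes}) is exactly the bookkeeping the paper elides, and your remark contrasting this with Case~1 of \Cref{lem:question-second-jump-simple} correctly identifies why no shrinking of the largeness class is needed here.
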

\begin{proof}
Since $c$ is $i$-valid, then $X \cap A^i \in \Ucal_C \subseteq \Ucal_{\zeta(e, \sigma^i, x)}$. Thus there is some $y \in \omega$ and some $\rho \subseteq X \cap A^i$ such that $\neg \Phi_e(\sigma^i \cup \rho, x, y)$ holds. The condition $d = (\sigma^i \cup \rho, \sigma^{1-i}, X - \{0, \dots, \max \rho\}, C)$ is the desired extension.
\end{proof}

\subsection{Forcing a jump of non-PA degree over $\emptyset'$}

Now the main combinatorics of the second-jump control have been introduced, let us explain the core argument of forcing the jump of a solution not to be of PA degree over $\emptyset'$. A degree is PA over $\emptyset'$ if and only if
it computes a $\{0,1\}$-valued completion of $n \mapsto \Phi^{\emptyset'}_n(n)$.

Suppose we want to prove that for every $\Delta^0_2$ set $A$, there is an infinite set $G \subseteq A$ or $G \subseteq \overline{A}$ such that $G'$ does not compute a $\{0,1\}$-valued completion of $n \mapsto \Phi^{\emptyset'}_n(n)$. We need the following notion of valuation.

\begin{definition}
A \emph{valuation} is a partial function $p :\subseteq \omega \to 2$.
A valuation is \emph{$\emptyset'$-correct} if $p(n) = \Phi^{\emptyset'}_n(n)$ for all $n \in \dom(p)$. Two valuations $p, q$ are \emph{incompatible} if there is an $n \in \dom(p) \cap \dom(q)$ such that $p(n) \neq q(n)$.
\end{definition}

Fix condition $c$, a side $i < 2$ and a Turing functional $\Gamma$. In order to force $\Gamma^{G'}$ not to be a completion of $n \mapsto \Phi^{\emptyset'}_n(n)$, it is sufficient to find an extension $d \leq c$ such that one of the following holds:
\begin{itemize}
	\item[(1)] $d \Vdash^i \Gamma^{G'} \not \subseteq p$ for some $\emptyset'$-correct valuation $p$
	\item[(2)] $d \Vdash^i \Gamma^{G'} \subseteq p_0$ and $d \Vdash^i \Gamma^{G'} \subseteq p_1$ for two incompatible valuations $p_0$ and $p_1$
\end{itemize}
where $\Gamma^{G'} \not \subseteq p$ is the $\Sigma^0_2$ formula $(\exists n \in \dom p) \Gamma^{G'}(n)\downarrow \neq p(n)$, and $\Gamma^{G'} \subseteq p$ is the $\Pi^0_2$ formula $(\forall n \in \dom p) \Gamma^{G'}(n)\uparrow \vee\ \Gamma^{G'}(n)\downarrow =  p(n)$. In particular, forcing the $\Sigma^0_2$ formula for a $\emptyset'$-correct valuation ensures that $\Gamma^{G'}(n)\downarrow \neq \Phi^{\emptyset'}_n(n)\downarrow$, while forcing the $\Pi^0_2$ formula for two incompatible valuations forces the partiality of $\Gamma^{G'}$. In both cases, we force $\Gamma^{G'}$ not to be a completion of $n \mapsto \Phi^{\emptyset'}_n(n)$.
The following lemma is adapted from a combinatorial lemma proven by Liu (Lemma 6.6 in \cite{Liu2012RT22}) and will be proven in \Cref{lem:combi-liu-valuation}.

\begin{lemma}[Liu~\cite{Liu2012RT22}]\label{lem:combi-liu-valuation-simplified}
Let $W$ be a $\emptyset'$-c.e.\  set of valuations.
Either $W$ contains a $\emptyset'$-correct valuation, or for every $k$,
there are $k$ pairwise incompatible valuations outside of $W$.
\end{lemma}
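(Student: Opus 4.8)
The plan is to establish the dichotomy directly, by an application of Kleene's recursion theorem relative to $\emptyset'$; this is legitimate because the auxiliary programs below only enumerate $W$, which is $\emptyset'$-c.e. Fix $k$. The guiding observation is that, for a well-chosen coordinate $e$, a valuation mentioning $e$ can be made $\emptyset'$-correct \emph{a posteriori}: if we keep the program with index $e$ silent (so that $\Phi^{\emptyset'}_e(e){\uparrow}$) as long as that valuation stays out of $W$, and only let $\Phi^{\emptyset'}_e(e)$ converge — to the matching value — once the valuation is seen in $W$, then either the valuation never enters $W$, or it enters $W$ and is at the same time $\emptyset'$-correct.

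Concretely, I would use the recursion theorem relative to $\emptyset'$ to fix distinct indices $e_1,\dots,e_{k-1}$ that are simultaneous fixed points for the programs described below, and set
\[
d_j=\{(e_1,0),\dots,(e_{j-1},0),(e_j,1)\}\quad (1\le j\le k-1),\qquad d_k=\{(e_1,0),\dots,(e_{k-1},0)\}.
\]
These $k$ valuations are pairwise incompatible, since for $i<j\le k$ we have $d_i(e_i)=1$ while $d_j(e_i)=0$. The program attached to $e_j$, on input $e_j$, enumerates $W$ using the oracle $\emptyset'$; the first time it sees some $d_{j_0}$ (with $1\le j_0\le k$) appear in $W$, it outputs $0$ if $j<j_0$, $1$ if $j=j_0$, and $0$ if $j>j_0$; if no such event ever occurs, it diverges. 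Since all $k-1$ programs run the same deterministic enumeration of $W$, they agree on which $j_0$ is witnessed first.

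The two cases of the dichotomy now correspond exactly to the two possible outcomes. If some $d_{j_0}$ is eventually enumerated into $W$, then by the fixed-point property $\Phi^{\emptyset'}_{e_j}(e_j){\downarrow}=d_{j_0}(e_j)$ for every coordinate $e_j\in\dom d_{j_0}$ — that is, for $1\le j\le j_0$ — so $d_{j_0}$ is an $\emptyset'$-correct valuation, and it lies in $W$; this is the first alternative. If instead no $d_{j_0}$ is ever enumerated into $W$, then $d_1,\dots,d_k$ are $k$ pairwise incompatible valuations outside $W$; this is the second alternative. One of the two must hold.

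What is left is routine: arranging the simultaneous recursion theorem so that the $e_j$ are distinct and each program may refer to all of them (the parametrized form of Kleene's recursion theorem, relativized to $\emptyset'$); checking that $\dom d_{j_0}=\{e_1,\dots,e_{j_0}\}$, so that ``$d_{j_0}$ is $\emptyset'$-correct'' is precisely what the programs certify; and the incompatibility computation above. There is no injury and no priority bookkeeping: the one nonobvious step — which I expect to be the only real point — is the decision to leave the values $\Phi^{\emptyset'}_{e_j}(e_j)$ undefined until $W$ forces our hand, which is exactly what converts ``$d_{j_0}$ enters $W$'' into ``$d_{j_0}$ is $\emptyset'$-correct''. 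This is the effective content of Liu's Lemma~6.6.
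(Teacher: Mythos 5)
Your proof is correct, but it takes a genuinely different route from the paper's. The paper proves the stronger, parametrized statement (\Cref{lem:combi-liu-valuation}): for any $X$ and any $Y \geq_T X$ that is not PA over $X$, and any $Y$-c.e. $W$, either $W$ contains an $X$-correct valuation or there are arbitrarily many pairwise incompatible valuations outside $W$. The argument there is indirect: it builds a family $S$ of finite sets, shows by contradiction (using the non-PA hypothesis) that $\emptyset \notin S$ and that one can keep extending a witness sequence $n_0, n_1, \dots$, and then extracts $2^{j+1}$ incompatible valuations with domain $\{n_0,\dots,n_j\}$. Your argument instead \emph{inlines} the diagonal content of ``$\emptyset'$ is not PA over $\emptyset'$'' by invoking the recursion theorem relative to $\emptyset'$ directly, so the contradiction the paper derives from the non-PA hypothesis is replaced by an explicit self-reference that keeps $\Phi^{\emptyset'}_{e_j}(e_j)$ undefined until $W$ commits. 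For the simplified statement (the case $X = Y = \emptyset'$) this is a clean, shorter, constructive proof, and your bookkeeping (pairwise incompatibility of the $d_j$, distinct fixed points via padding, agreement of the programs on the first $d_{j_0}$ to appear) all checks out; the only cosmetic slip is ``for $1\le j\le j_0$'', which in the case $j_0=k$ should read $1\le j\le k-1=j_0-1$, but that is exactly $\dom d_k$ and does not affect the argument. What the paper's approach buys, and yours does not, is the general form: when $X <_T Y$, your program — which must be $X$-computable on input $e_j$ in order to control $\Phi^{X}_{e_j}(e_j)$ — cannot enumerate the $Y$-c.e. set $W$, so the recursion-theorem trick stops working. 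That general form is what the paper actually uses when relativizing \Cref{thm:jump-pa-avpoidance-d22} to an arbitrary $Z$ (taking $X=\emptyset'$, $Y=Z'$ with $Z'$ not PA over $\emptyset'$), so if you wanted your argument to serve in place of \Cref{lem:combi-liu-valuation} in the paper you would still need the paper's indirect version or another argument covering $X \ne Y$.
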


Let us apply \Cref{lem:combi-liu-valuation-simplified} to the following $\emptyset'$-c.e.\ set of valuations
$$
W = \{ p : c \qvdash^i \Gamma^{G'} \not \subseteq p \}
$$

\noindent\textbf{Case 1}: $p \in W$ for some $\emptyset'$-correct valuation $p$. Then $c \qvdash^i \Gamma^{G'} \not \subseteq p$. By \Cref{lem:question-second-jump-simple}, there is an extension $d \leq c$ such that $d \Vdash^i \Gamma^{G'}(n)\not \subseteq p$.

\bigskip
\noindent\textbf{Case 2}: $W \cap \{p_0, p_1, p_2\} = \emptyset$ for three pairwise incompatible valuations $p_0, p_1$ and $p_2$. Say $c = (\sigma^0, \sigma^1, X, C)$. By definition of $W$, $c \nqvdash^i \Gamma^{G'} \not \subseteq p_j$ for every $j < 3$. Unfolding the definition of the forcing question, for every $j < 3$, the class
$$
\Ucal_C \cap \bigcap \{ \Ucal_{\zeta(e_j, \sigma^i \cup \rho, x)} : x \in \omega, \rho \subseteq X \cap A^i \}
$$
is a largeness class, where $e_j \in \omega$ is an index of the $\Sigma^0_2$ formula $(\exists x)(\forall y)\Phi_{e_j}(G, x, y)$ which holds if $\Gamma^{G'} \not \subseteq p_j$. Let $C_j = C \cup \{ \langle \zeta(e_j, \sigma^i \cup \rho, x), 0 \rangle : x \in \omega, \rho \subseteq X \cap A^i \}$. Although $\Ucal_{C_j}$ is a largeness class for every $j < 3$, in general, it is not true that there are $j_0 < j_1 < 3$ such that $\Ucal_{C_{j_0}} \cap \Ucal_{C_{j_1}}$ is a largeness class. However, the product $\Ucal_{C_{j_0}} \times \Ucal_{C_{j_1}}$ is a largeness class in the following generalized sense:

\begin{definition}
A class $\Acal \subseteq 2^\omega \times 2^\omega$ is a \emph{largeness class} if
\begin{itemize}
	\item[(1)] For every $\langle X_0, X_1 \rangle \in \Acal$ and $Y_0 \supseteq X_0$ and $Y_1 \supseteq X_1$, $\langle Y_0, Y_1 \rangle \in \Acal$
	\item[(2)] For every $k, \ell \in \omega$ and every $X_0 \cup \dots \cup X_{k-1} \supseteq \omega$, and $Y_0 \cup \dots \cup Y_{\ell-1} \supseteq \omega$, there is some $r < k$ and $s < \ell$ such that $\langle X_s, Y_\ell\rangle \in \Acal$.
\end{itemize}
\end{definition}

Note that by taking the common refinement of the two covers, one can replace item (2) by \qt{For every $k \in \omega$ and every $X_0 \cup \dots \cup X_{k-1} \supseteq \omega$, there is some $j_0, j_1 < k$ such that $\langle X_{j_0}, X_{j_1} \rangle \in \Acal$.} Again, given some pair $\langle X_0, X_1 \rangle$, we let $\Lcal_{\langle X_0, X_1\rangle}$ be the class of all pairs $\langle Y_0, Y_1\rangle$ such that $Y_0 \cap X_0$ and $Y_1 \cap X_1$ are both infinite. The notion of largeness class over an arbitrary product and the $\Lcal$ notation is defined accordingly.

The cartesian product of two largeness classes is again a largeness class. However, some largeness classes over $2^\omega \times 2^\omega$ cannot be expressed as a cartesian product of two largeness classes over $2^\omega$, as witnessed by the class $\{ \langle X, Y \rangle : |X \cap Y| = \infty \}$. The extension of $c$ forcing partiality of $\Gamma^{G'}$ on side $i$ is of the following type:
$$
c = (\sigma^0_{j_0,j_1}, \sigma^1_{j_0, j_1}, X_0, X_1, X_2, D : j_0 < j_1 < 3)
$$
where $\sigma^i_{j_0, j_1} \subseteq A^i$ for every $j_0 < j_1 < 3$ and $i < 2$,
$X_0, X_1, X_2$ are sets of low degree with $\max (\sigma^0_{j_0,j_1}, \sigma^1_{j_0,j_1}) < \min (X_{j_0}, X_{j_1})$ for every $j_0 < j_1 < 3$.
Moreover, $D$ is a $\Delta^0_2$ set of indices such that
$\Ucal^3_D$ is a largeness class over $2^\omega \times 2^\omega \times 2^\omega$, such that $\Ucal^3_D \subseteq \Lcal_{\langle X_0, X_1, X_2\rangle}$.

One can think of such a condition $c$ from a partial order $\Pb$ as three parallel conditions $c^{\{0,1\}}$, $c^{\{0,2\}}$ and $c^{\{1,2\}}$ from a partial order $\Qb$ where
$$
c^{\{j_0, j_1\}} = (\sigma^0_{j_0,j_1}, \sigma^1_{j_0, j_1}, X_{j_0}, X_{j_1}, \pi_{\{j_0, j_1\}}(\Ucal^3_D))
$$
with $\pi_{\{j_0, j_1\}}(\Ucal^3_D) = \{\langle Y_{j_0}, Y_{j_1}\rangle : \langle Y_0, Y_1, Y_2 \rangle \in \Ucal^3_D \}$. Any such $\Qb$-condition $c^{\{j_0, j_1\}}$ has two reservoirs $X_{j_0}$ and $X_{j_1}$, both of which $\sigma^0_{j_0,j_1}$ and $\sigma^1_{j_0, j_1}$ take elements from.
The forcing relation over $\Qb$ is defined as follows:

\begin{definition}
Given a $\Delta_0$ formula $\Phi_e(G, x, y)$, a finite set $\sigma \in 2^{<\omega}$ and some integer $x \in \omega$, let $\zeta_2(e, \sigma, x)$ be an index of the $\Sigma^0_1$ class
$$
\Ucal^2_{\zeta_2(e, \sigma, x)} = \{ \langle X_0, X_1 \rangle : (\exists \rho \subseteq X_0 \cup X_1)(\exists y \in \omega) \neg \Phi_e(\sigma \cup \rho, x, y) \}
$$
\end{definition}

\begin{definition}
Let $c = (\sigma^0, \sigma^1, X_0, X_1, \Acal)$ be a $\Qb$-condition, $i < 2$ and $\Phi_e(G, x, y)$ be a $\Delta_0$ formula.
\begin{itemize}
	\item[(a)] $c \Vdash^i (\exists x)(\forall y)\Phi_e(G,x, y)$ if there is some $x \in \omega$ such that $c \Vdash^i (\forall y)\Phi_e(G, x, y)$ holds.
	\item[(b)] $c \Vdash^i (\forall x)(\exists y)\neg \Phi_e(G,x, y)$ if for every $x \in \omega$ and every $\rho \subseteq (X_0 \cup X_1) \cap A^i$, $\Acal \subseteq \Ucal^2_{\zeta_2(e, \sigma^i \cup \rho, x)}$.
\end{itemize}
\end{definition}

We need again to define a notion of $i$-validity for a $\Qb$-condition to ensure that the forced $\Pi^0_2$ formulas hold for any sufficiently generic set.
A $\Qb$-condition $c = (\sigma^0, \sigma^1, X_0, X_1, \Acal)$ is \emph{$i$-valid} if $\langle X_0 \cap A^i, X_1 \cap A^i \rangle \in \Acal$. There is no reason to believe that every $\Qb$-condition must have a valid side. However, by the pigeonhole principle, for every $\Pb$-condition $c = (\sigma^0_{j_0,j_1}, \sigma^1_{j_0, j_1}, X_0, X_1, X_2, D : j_0 < j_1 < 3)$, there is some $j_0 < j_1 < 3$ and some $i < 2$ such that $c^{\{j_0, j_1\}}$ is an $i$-valid $\Qb$-condition. Indeed, since $\Ucal^3_C \subseteq \Lcal_{\langle X_0, X_1, X_2\rangle}$ is a largeness class, there are some $i_0, i_1, i_2 < 2$ such that $\langle X_0 \cap A^{i_0}, X_1 \cap A^{i_1}, X_2 \cap A^{i_2} \rangle \in \Ucal^3_C$. In particular, there are some $j_0 < j_2 < 3$ be such that $i_{j_0} = i_{j_1}$. The $\Qb$-condition $c^{\{j_0, j_1\}}$ is $i_{j_0}$-valid. The existence of a valid side is the reason we pick three pairwise incompatible valuations.

Having made the necessary definitions, consider the $\Pb$-condition
$$
d = (\sigma^0_{j_0,j_1}, \sigma^1_{j_0, j_1}, X_0, X_1, X_2, D : j_0 < j_1 < 3)
$$
where $\sigma^0_{j_0, j_1} = \sigma^0$, $\sigma^1_{j_0, j_1} = \sigma^1$,
$X_0 = X_1 = X_2 = X$ and $D$ is such that $\Ucal^3_D = \Ucal_{C_0} \times \Ucal_{C_1} \times \Ucal_{C_2}$. The condition $d$ is an extension of $c$
such that for every $j_0 < j_1 < 3$
$$
d^{\{j_0,j_1\}} \Vdash^i \Gamma^{G'}(n) \subseteq p_{j_0}
\mbox{ and }
d^{\{j_0,j_1\}} \Vdash^i \Gamma^{G'}(n) \subseteq p_{j_1}
$$

We need to define a new forcing question for the generalized notion of $\Pb$-condition, in order to satisfy more requirements. The new notion of $\Pb$-condition admits multiple branches, namely, the $\Qb$-conditions. Only one side of one branch is guaranteed to be valid. We therefore need to force the requirements on each side of each branch.

Given a $\Pb$-condition $c$ and some side $i < 2$, we let $H(c, i)$ be the set of branches (here the set of pairs $\{j_0, j_1\} \in [\{0, 1, 2\}]^2$) such that the requirement is not forced on the side $i$ of $c^{\{j_0, j_1\}}$. We design a forcing question parameterized by the set $H(c, i)$ such that if $c \qvdash^i_H (\exists x)(\forall y)\Phi_e(G, x, y)$ holds, then there is an extension $d \leq c$ which does not increase the number of branches, and such that $d^{\{j_0, j_1\}} \Vdash^i (\exists x)(\forall y)\Phi_e(G, x, y)$ for some $\{j_0, j_1\} \in H$. On the other hand, if  $c \nqvdash^i_H (\exists x)(\forall y)\Phi_{e_j}(G, x, y)$ for sufficiently many $e_j$ (which depends on the number of branches), then there is an extension $d \leq c$ which increases the number of branches,
but such that for every new branch $\nu$ refining a branch $\{j_0, j_1\} \in H$,
there are two indices $r \neq s$ such that $d^{[\nu]} \Vdash^i (\forall x)(\exists y)\neg \Phi_{e_r}(G, x, y)$ and $d^{[\nu]} \Vdash^i (\forall x)(\exists y)\neg \Phi_{e_s}(G, x, y)$ simultaneously.

In both cases, the number of branches for which the requirement is not forced decreases. In the first case, one more branch satisfies the $\Sigma^0_2$ outcome. In the second case, all the remaining branches satisfy the $\Pi^0_2$ outcome. Each time the $\Pi^0_2$ outcome occurs, then the number of branches increases.

\begin{definition}
Let $c = (\sigma^0_{j_0,j_1}, \sigma^1_{j_0, j_1}, X_0, X_1, X_2, C : j_0 < j_1 < 3)$ be a $\Pb$-condition, $i < 2$ and $\Phi_e(G, x, y)$ be a $\Delta_0$ formula. Fix $H \subseteq [\{0,1,2\}]^2$.
Let $c \qvdash^i_H (\exists x)(\forall y)\Phi_e(G, x, y)$ hold if
$$
\Ucal^3_C \cap \bigcap_{\{j_0, j_1\} \in H} \{ \langle X_0, X_1, X_2 \rangle :  \langle X_{j_0}, X_{j_1} \rangle \in \Ucal_{\zeta_2(e, \sigma^i_{j_0,j_1} \cup \rho, x)} : x \in \omega, \rho \subseteq (X_{j_0} \cap X_{j_1}) \cap A^i \}
$$
is not a largeness class.
\end{definition}

The $\Pb$ and $\Qb$ notions of forcing have to be generalized to conditions with arbitrarily many branches and reservoirs. The general case is formally defined and proven in the next section.

\begin{remark}
One important obstacle when using a variant of Mathias forcing to separate $\srt^2_2$ from $\coh$ on $\omega$-models is that every sufficiently generic filter produces a solution to $\srt^2_2$ which is r-cohesive \emph{as a set}. Indeed, given a condition $(\sigma^0, \sigma^1, X)$ and a computable set $R$, either $X \cap R$ or $X \cap \overline{R}$ is infinite (or belongs to some largeness class). Then either $(\sigma^0, \sigma^1, X \cap R)$ or $(\sigma^0, \sigma^1, X \cap \overline{R})$ is an extension forcing both sets to be either almost included in $R$ or in $\overline{R}$. In our situation, we overcome the problem by using increasingly many reservoirs simultaneously. Indeed, consider a condition $(\sigma^0, \sigma^1, X_0, X_1)$ where $\sigma^0$ and $\sigma^1$ take their elements from $X_0 \cup X_1$, and both $X_0$ and $X_1$ are required to be infinite. Then if only $X_0 \cap R$ and $X_1 \cap \overline{R}$ are infinite, the generic sets will have an infinite intersection with $R$ and $\overline{R}$.
\end{remark}

\section{Jump PA avoidance}\label{sect:delta2-jump-not-pa}

In this section, we give a formal proof of the following main theorem outlined in \Cref{sect:outline}:

\begin{theorem*}[\ref{thm:jump-pa-avpoidance-d22}]
For every set $Z$ whose jump is not of PA degree over $\emptyset'$ and every $\Delta^{0,Z}_2$ set $A$, there is an infinite subset $G \subseteq A$ or $G \subseteq \overline{A}$ such that $(G \oplus Z)'$ is not of PA degree over $\emptyset'$.
\end{theorem*}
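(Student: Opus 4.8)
The plan is to carry out the forcing construction sketched in \Cref{sect:outline} in full generality, relativized to $Z$. First I would fix, by the low basis theorem relativized to $Z$, a countable Scott set $\Mcal$ containing $Z$ and coded by a set $M$ with $M \geq_T Z$ and $M' \leq_T Z'$, and set $A^0 = A$, $A^1 = \overline{A}$. A $\Pb$-condition is a tuple consisting of an integer $n$, infinite reservoirs $X_0, \dots, X_{n-1} \in \Mcal$, a finite family of \emph{branches} --- each branch $\nu$ carrying two stems $\sigma^0_\nu \subseteq A^0$, $\sigma^1_\nu \subseteq A^1$ and a support $\mathrm{supp}(\nu) \subseteq \{0, \dots, n-1\}$ from whose reservoirs the stems draw their elements --- together with a $\Delta^{0,Z}_2$ set $C$ such that $\Ucal^n_C$ is a largeness class over $(2^\omega)^n$ with $\Ucal^n_C \subseteq \Lcal_{\langle X_0, \dots, X_{n-1}\rangle}$. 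An extension refines or splits reservoirs (each old reservoir being covered by finitely many new ones), enlarges $C$, grows the stems within their supports, and refines the branches, always preserving this largeness invariant. The forcing relations for $\Sigma^0_1$ and $\Pi^0_1$ formulas are defined branch- and side-wise as in Cholak, Jockusch and Slaman, and those for $\Sigma^0_2$ and $\Pi^0_2$ formulas via the $\zeta$-indices and membership in $C$ as in \Cref{sect:outline}, so that their complexities are $\Sigma^0_2(Z)$ and $\Pi^0_2(Z)$ respectively.

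The technical heart is a small package of lemmas on largeness classes over products: the Cartesian product of largeness classes is again one; a decreasing intersection of upward-closed $\Sigma^0_1$ classes is a largeness class unless some finite sub-intersection already fails to be (\Cref{lem:decreasing-largeness-yields-largeness}); failing to be a largeness class for a $\Sigma^{0,Y}_1$ class is a $\Sigma^{0,Y}_2$ property (\Cref{lem:largeness-class-complexity}); and for any finite cover of $\omega$ and any largeness class $\Ucal^n_C \subseteq \Lcal_{\langle X_0, \dots, X_{n-1}\rangle}$, intersecting $\Ucal^n_C$ with $\Lcal$ of the appropriate coordinate-restricted reservoirs keeps some piece large (\Cref{lem:intersection-compatibility}). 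Since $Z \in \Mcal$ and $M' \leq_T Z'$, these combine to show that the forcing question $c \qvdash^i_H (\exists x)(\forall y)\Phi_e(G, x, y)$ is uniformly $\Sigma^0_2(Z)$ and obeys the branch-aware analogue of \Cref{lem:question-second-jump-simple} described in \Cref{sect:outline}: when it holds there is an extension, creating no new branches, forcing the $\Sigma^0_2$ formula on one branch of $H$; when it fails for enough indices $e_j$ --- a bound depending on the current number of branches --- there is an extension, splitting the branches of $H$, on each refinement of which two of the complementary $\Pi^0_2$ formulas are forced simultaneously. One also defines $i$-validity of a branch ($\langle X_j \cap A^i : j \in \mathrm{supp}(\nu) \rangle$ lying in the projected largeness class), checks via the pigeonhole principle that every $\Pb$-condition has a branch valid on some side and that validity persists upward, so that every filter carries a valid direction.

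Next I would enumerate the requirements. For infinitude it suffices that for each branch $\nu$ and side $i$ the stem $\sigma^i_\nu$ admits an extension, which always holds on a valid side since validity forces the relevant reservoirs intersected with $A^i$ to be infinite. For PA avoidance, fix a Turing functional $\Gamma$ and write $h$ for the partial function $n \mapsto \Phi^{\emptyset'}_n(n)$. Given a condition $c$, a side $i$, and the set $H$ of branches on which the $\Gamma$-requirement is not yet forced, form the set of valuations $W = \{ p : c \qvdash^i_H \Gamma^{(G \oplus Z)'} \not\subseteq p \}$, which is $Z'$-c.e.\ by the complexity of the forcing question. Since $Z'$ is not of PA degree over $\emptyset'$, \Cref{lem:combi-liu-valuation} --- the relativization of \Cref{lem:combi-liu-valuation-simplified} to $Z'$-c.e.\ sets, which is exactly where this hypothesis is consumed --- splits into two cases. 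In Case~1, $W$ contains a $\emptyset'$-correct valuation $p$, and the positive case of the forcing-question dichotomy gives $d \leq c$ with $d^{[\nu]} \Vdash^i \Gamma^{(G\oplus Z)'} \not\subseteq p$ for some $\nu \in H$, whence $\Gamma^{(G\oplus Z)'}(n)\downarrow \neq h(n)$ for some $n$. In Case~2, $W$ omits $k$ pairwise incompatible valuations, with $k$ chosen large relative to how many branches $c$ can split into; feeding these to the negative case of the dichotomy yields $d \leq c$ with more branches such that, on every refinement $\nu'$ of a branch of $H$, there are incompatible valuations $p_r, p_s$ with $d^{[\nu']} \Vdash^i \Gamma^{(G\oplus Z)'} \subseteq p_r$ and $d^{[\nu']} \Vdash^i \Gamma^{(G\oplus Z)'} \subseteq p_s$, forcing $\Gamma^{(G\oplus Z)'}$ to be partial there. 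Iterating over the finitely many elements of $H$ --- Case~1 removes one branch and Case~2 empties $H$, so $|H|$ strictly decreases --- one reaches in finitely many steps an extension forcing the $\Gamma$-requirement on every side of every branch.

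Finally I would build a filter $\Fcal$ meeting the countably many dense sets obtained by interleaving all functionals $\Gamma$ with all stem-extension requirements; the filter need not be effective. A compactness argument on the finitely branching refinement structure of the branches of the conditions in $\Fcal$, together with a pigeonhole over the two sides, produces a valid direction with a fixed side $i^*$; set $G = \bigcup \{ \sigma^{i^*}_\nu : \nu \text{ on this direction} \}$. Then $G \subseteq A^{i^*}$ is infinite, and for every $\Gamma$ the $\Gamma$-requirement has been forced on the eventual branch and side, so $\Gamma^{(G\oplus Z)'}$ is not a $\{0,1\}$-valued completion of $h$; as $\Gamma$ is arbitrary, $(G\oplus Z)'$ computes no such completion, i.e.\ is not of PA degree over $\emptyset'$. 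Since $i^* \in \{0,1\}$, we get $G \subseteq A$ or $G \subseteq \overline{A}$, which proves the theorem. The main obstacle is the bookkeeping of the unboundedly growing branch-and-reservoir structure against the $\Delta^{0,Z}_2$-indexed largeness classes over growing products: one must verify at each stage that a valid extension exists --- largeness being preserved under splitting a class over $(2^\omega)^n$ into one over $(2^\omega)^{n'}$ and under the coordinate restrictions of \Cref{lem:intersection-compatibility} --- and that the valid-direction extraction at the limit is legitimate. A close second is pinning down that the forcing question for the $\Sigma^0_2$ formulas concerning $(G\oplus Z)'$ is exactly $Z'$-c.e., since this is what allows \Cref{lem:combi-liu-valuation} to be applied with the non-PA hypothesis on $Z'$.
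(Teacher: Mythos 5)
Your proposal follows essentially the same approach as the paper: a variant of Mathias forcing whose conditions carry multiple stems and reservoirs (branches), a $\Delta^0_2$-indexed largeness class $\Ucal_C$ over a product constraining the reservoirs, a $\Sigma^0_2$ forcing question parameterized by the set $H$ of unforced branches, Liu's valuation dichotomy applied to the $\emptyset'$-c.e.\ (or $Z'$-c.e.) set $W$ of valuations for which the forcing question holds, the notion of $i$-validity with its upward closure and a pigeonhole argument guaranteeing a valid branch and side, and finally extraction of a valid direction through a sufficiently generic filter. The only cosmetic difference is that you describe the branching structure informally via supports, whereas the paper pins it down via the inductively defined $n$-index sets $\Ical_n$ and the count $u_n$ of $n$-indices (so that $2u_n+1$ pairwise incompatible valuations suffice in the $\Pi^0_2$ case), but this is bookkeeping rather than a change of method.
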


Before proving \Cref{thm:jump-pa-avpoidance-d22}, we prove its main  consequence, namely, the separation of $\srt^2_2$ from $\coh$ on $\omega$-models.

\begin{theorem*}[\ref{thm:srt22-coh-omega-models}]
There is an $\omega$-model of $\rca \wedge \srt^2_2$ which is not a model of $\coh$.
\end{theorem*}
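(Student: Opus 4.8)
The plan is to build the second-order part $\Scal$ of the desired $\omega$-model as an increasing union of countable Turing ideals $\Scal = \bigcup_n \Ical_n$, arranging by a bookkeeping argument that $\Scal$ is closed under solutions to $\srt^2_2$ while containing no set whose jump is of PA degree over $\emptyset'$. Since $\rca$ holds in every $\omega$-model, and since $\srt^2_2$ and $\mathsf{D}^2_2$ are computably equivalent (Chong, Lempp, Yang~\cite{Chong2010role}), it suffices to ensure that for every $Z \in \Scal$ and every set $A$ that is $\Delta^{0,Z}_2$, there is an infinite $G \subseteq A$ or $G \subseteq \overline A$ with $G \oplus Z \in \Scal$; this makes $\Scal$ a model of $\mathsf{D}^2_2$, hence of $\srt^2_2$. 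The non-PA-over-$\emptyset'$ invariant, maintained throughout, will then be used to show $\Scal \not\models \coh$.

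First I would fix an invariant: every $Z \in \Scal$ has the property that $Z'$ is \emph{not} of PA degree over $\emptyset'$. Start with $\Ical_0$ = the ideal generated by a single low set (e.g.\ $\emptyset$), which trivially satisfies the invariant since $\emptyset'$ is not PA over $\emptyset'$. At stage $n+1$, using a fixed enumeration of all pairs $(Z, \Theta)$ where $Z$ ranges over a cofinal sequence of reals appearing in the $\Ical_m$'s and $\Theta$ is a Turing functional coding a $\Delta^{0,Z}_2$ instance of $\mathsf{D}^2_2$ (i.e.\ a $\Delta^{0,Z}_2$ $2$-partition $A \cup \overline A = \omega$), apply \Cref{thm:jump-pa-avpoidance-d22} to $Z$ and $A$: this yields an infinite $G \subseteq A$ or $G \subseteq \overline A$ with $(G\oplus Z)'$ not of PA degree over $\emptyset'$. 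Set $\Ical_{n+1}$ to be the Turing ideal generated by $\Ical_n$ together with $G \oplus Z$. One must check the invariant is preserved: every set in $\Ical_{n+1}$ is Turing-reducible to $(G \oplus Z) \oplus W$ for some $W \in \Ical_n$; since both $(G\oplus Z)'$ and $W'$ are non-PA over $\emptyset'$, and the class of degrees PA over $\emptyset'$ is closed upward and under join in an appropriate sense — more precisely, if $(G \oplus Z \oplus W)'$ were PA over $\emptyset'$ one would need a finer argument, so here I would instead iterate \Cref{thm:jump-pa-avpoidance-d22} relative to the \emph{join} $Z \oplus W$ already present in $\Ical_n$ rather than to $Z$ alone, guaranteeing $((G \oplus Z \oplus W))'$ is non-PA over $\emptyset'$ directly, and this bounds the jump of every member of $\Ical_{n+1}$. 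By standard bookkeeping, every $(Z,\Theta)$ is handled at some stage, so $\Scal = \bigcup_n \Ical_n$ is a Turing ideal modelling $\mathsf{D}^2_2$, hence $\rca \wedge \srt^2_2$.

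It remains to see $\Scal \not\models \coh$. Here I invoke the characterization of Jockusch and Stephan~\cite{Jockusch1993cohesive}: the sequence $\vec R$ of all primitive recursive sets is computable (so $\vec R \in \Scal$), and any cohesive set $C$ for $\vec R$ is p-cohesive, whence $C'$ is of PA degree over $\emptyset'$. But every $C \in \Scal$ has $C'$ not of PA degree over $\emptyset'$ by the invariant, so $\Scal$ contains no cohesive set for $\vec R$; thus $\Scal \not\models \coh$, and a fortiori $\Scal \not\models \rt^2_2$. The main obstacle is the bookkeeping detail in the previous paragraph: ensuring the ``non-PA over $\emptyset'$'' invariant is genuinely closed under the ideal operations, which is why one must apply \Cref{thm:jump-pa-avpoidance-d22} relative to a sufficiently rich oracle $Z$ (absorbing all previously added sets, using that $\Ical_n$ is countable hence has a cofinal sequence of reals, or more simply by coding $\Ical_n$ into a single set of the appropriate jump) at each step, rather than naively; everything else is routine iteration of the main theorem.
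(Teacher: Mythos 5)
Your proposal is correct and matches the paper's proof in essence: both iterate \Cref{thm:jump-pa-avpoidance-d22} relative to the join of all previously produced sets to build a Turing ideal $\Scal$ modelling $\mathsf{D}^2_2$ (hence $\srt^2_2$) in which no jump is of PA degree over $\emptyset'$, and both then invoke the Jockusch--Stephan characterization of p-cohesive degrees to conclude $\Scal \not\models \coh$. The self-correction you make midway (applying the theorem to $Z \oplus W$ rather than $Z$ alone) is exactly what the paper encodes by working with the sets $Z_0 \oplus \cdots \oplus Z_s$.
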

\begin{proof}
By \Cref{thm:jump-pa-avpoidance-d22}, there is a countable sequence of sets $Z_0, Z_1, \dots$ such that for every $s \in \omega$, the jump of $Z_0 \oplus \dots \oplus Z_s$ is not of PA degree over $\emptyset'$,
and for every $\Delta^0_2(Z_0 \oplus \dots \oplus Z_s)$ set $A$, there is some $t \in \omega$ such that $Z_t \subseteq A$ or $Z_t \subseteq \overline{A}$.
Let $\Ical = \{ X \in 2^\omega : (\exists s) X \leq_T Z_0 \oplus \dots \oplus Z_s \}$. The collection $\Ical$ is a Turing ideal. Let $\Mcal$ be the $\omega$-structure whose second-order part is $\Ical$. Every instance of $\srt^2_2$ in $\Ical$ has a solution in $\Ical$, so $\Mcal$ is an $\omega$-model of $\srt^2_2$. Moreover, $\Ical$ does not contain any set whose jump is of PA degree over $\emptyset'$. By Jockusch and Stephan~\cite{Jockusch1993cohesive}, $\Ical$ does not contain any p-cohesive set, so $\Mcal$ is not a model of $\coh$.
\end{proof}

The remainder of this section is devoted to the proof of  \Cref{thm:jump-pa-avpoidance-d22}. In what follows, fix a $\Delta^0_2$ set $A$, and let $A^0 = A$ and $A^1 = \overline{A}$. Fix also a countable Scott set $\Mcal$, coded by a low set~$M$, as in the previous section.

\subsection{Largeness classes}

The following notion of largeness class was introduced by the authors in~\cite{Monin2018Pigeons} to design a notion of forcing controlling the second jump of solutions to the pigeonhole principle.
In what follows, given two sets $A$ and $B$, we denote by $A \to B$ the class of all functions from $A$ to $B$.

\begin{definition}
Fix a finite set $I \subseteq \omega^{<\omega}$.
A \emph{largeness class} is a collection of sets $\Acal \subseteq I \to 2^\omega$ such that
\begin{itemize}
	\item[(a)] If $\langle X_\nu : \nu \in I\rangle \in \Acal$ and $Y_\nu \supseteq X_\nu$ for every $\nu \in I$, then $\langle Y_\nu : \nu \in I \rangle \in \Acal$
	\item[(b)] For every $k$-cover $Y_0, \dots, Y_{k-1}$ of $\omega$, there is some $\langle j_\nu < k : \nu \in I\rangle$ such that $\langle Y_{j_\nu} : \nu \in I \rangle \in \Acal$.
\end{itemize}
\end{definition}

Whenever $I = \{\epsilon\}$, we identify the class $\{\epsilon\} \to 2^\omega$ with the class $2^\omega$. This yields a notion of largeness class for subsets of $2^\omega$.
The collection of all the infinite sets is a largeness class. Moreover, any superclass of a largeness class is again a largeness class.

Given $I \subseteq 2^{<\omega}$, we fix a uniformly $M$-computable enumeration $\Ucal^I_0, \Ucal^I_1, \dots$ of all the $\Sigma^{0,Z}_1$ subclasses of $I \to 2^\omega$, upward-closed under the superset relation, where $Z \in \Mcal$. Here, the upward-closure means that if $\langle X_\nu : \nu \in I \rangle \in \Ucal^I_e$ and $Y_\nu \supseteq X_\nu$ for every $\nu \in I$, then $\langle Y_\nu : \nu \in I \rangle \in \Ucal^I_e$.
Given a set $C \subseteq \omega$, we write
$$
\Ucal^I_C = \bigcap_{e \in C} \Ucal^I_e
$$
If $C$ is $\Delta^0_2$, then $\Ucal^I_C$ is $\Pi^0_2$.

\begin{lemma}\label{lem:decreasing-largeness-yields-largeness}
Suppose $\Acal_0 \supseteq \Acal_1 \supseteq \dots$ is a decreasing sequence of largeness classes.
Then $\bigcap_s \Acal_s$ is a largeness class.
\end{lemma}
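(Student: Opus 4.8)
The plan is to verify directly the two defining clauses (a) and (b) of a largeness class for the intersection $\Acal_\infty := \bigcap_s \Acal_s$, all the classes being understood over the same fixed finite index set $I$. Clause (a), upward closure, will be immediate: if $\langle X_\nu : \nu \in I\rangle \in \Acal_\infty$ and $Y_\nu \supseteq X_\nu$ for every $\nu \in I$, then $\langle X_\nu : \nu \in I \rangle$ lies in each $\Acal_s$, hence by upward closure of $\Acal_s$ so does $\langle Y_\nu : \nu \in I\rangle$, and therefore $\langle Y_\nu : \nu \in I\rangle \in \Acal_\infty$.

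The substance is clause (b). Fix a $k$-cover $Y_0, \dots, Y_{k-1}$ of $\omega$. For each $s \in \omega$, largeness of $\Acal_s$ provides a tuple $\langle j^s_\nu < k : \nu \in I\rangle$ with $\langle Y_{j^s_\nu} : \nu \in I\rangle \in \Acal_s$. Since $I$ is finite, there are only $k^{|I|}$ possible such tuples, so by the pigeonhole principle one tuple $\langle j_\nu < k : \nu \in I\rangle$ is chosen for infinitely many values of $s$.

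The final step is to upgrade \qt{for infinitely many $s$} to \qt{for all $s$}, and this is precisely where the hypothesis that $\Acal_0 \supseteq \Acal_1 \supseteq \cdots$ is decreasing is used: for any $t$, pick $s \geq t$ with $\langle Y_{j_\nu} : \nu \in I\rangle \in \Acal_s \subseteq \Acal_t$, so $\langle Y_{j_\nu} : \nu \in I\rangle \in \Acal_t$; as $t$ is arbitrary, $\langle Y_{j_\nu} : \nu \in I\rangle \in \Acal_\infty$, which witnesses (b). I do not expect any genuine obstacle: the argument is a routine pigeonhole combined with the decreasing property, the only essential hypothesis being the finiteness of $I$, which bounds the number of candidate witnessing tuples and so makes the pigeonhole step available.
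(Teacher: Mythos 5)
Your proof is correct and follows essentially the same approach as the paper: verify upward closure directly, and for the covering clause apply the pigeonhole principle over the finitely many candidate tuples $\langle j_\nu : \nu \in I\rangle$, then use the decreasing hypothesis to pass from \qt{infinitely many $s$} to \qt{all $s$}.
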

\begin{proof}
If $\langle X_\nu : \nu \in I \rangle \in \bigcap_s \Acal_s$ and $Y_\nu \supseteq X_\nu$ for every $\nu \in I$, then for every $s$, since $\Acal_s$ is a largeness class, $\langle Y_\nu : \nu \in Y \rangle \in \Acal_s$, so  $\langle Y_\nu : \nu \in Y \rangle \in \bigcap_s \Acal_s$.
Let $Y_0, \dots, Y_{k-1}$ be a $k$-cover of $\omega$. For every $s \in \omega$, there is some $\langle j_\nu < k : \nu \in I \rangle$
such that $\langle Y_{j_\nu} : \nu \in I \rangle \in \Acal_s$. By the infinite pigeonhole principle, there is some $\langle j_\nu < k : \nu \in I \rangle$ such that $\langle Y_{j_\nu} : \nu \in I \rangle \in \Acal_s$ for infinitely many $s$. Since $\Acal_0 \supseteq \Acal_1 \supseteq$ is a decreasing sequence,  $\langle Y_{j_\nu} : \nu \in I \rangle\in \bigcap_s \Acal_s$.
\end{proof}

\Cref{lem:decreasing-largeness-yields-largeness} has several very useful consequences. In particular, if $\Ucal_C$ is not a largeness class, then by \Cref{lem:decreasing-largeness-yields-largeness}, there is a finite set $F \subseteq C$ such that the class $\Ucal_F$  is not a largeness class. The set $F$ being finite, the class $\Ucal_F$ is $\Sigma^{0,Z}_1$ for some $Z \in \Mcal$, so since $\Mcal$ is a Scott set, there is a $k$-cover $X_0 \cup \dots \cup X_{k-1} = \omega$ in $\Mcal$ such that for every $j < k$, $X_j \not \in \Ucal_F \supseteq \Ucal_C$. Therefore we can always find a $k$-cover belonging to $\Mcal$, witnessing that $\Ucal_C$ is not a largeness class, whatever the complexity of the set~$C$. Another consequence is the following lemma.

\begin{lemma}\label{lem:largeness-class-complexity}
Let $\Acal$ be a $\Sigma^0_1$ class.
The sentence “$\Acal$ is a largeness class" is $\Pi^0_2$.
\end{lemma}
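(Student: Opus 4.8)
The plan is to unfold the definition of "largeness class" for a $\Sigma^0_1$ class $\Acal \subseteq I \to 2^\omega$ and show that the two defining clauses (a) and (b), together with the requirement that $\Acal$ be of the announced syntactic form, combine to a $\Pi^0_2$ condition, using the key reduction afforded by \Cref{lem:decreasing-largeness-yields-largeness} and the compactness of $\Mcal$ to replace the quantifier over arbitrary $k$-covers by a quantifier over $k$-covers coded in $\Mcal$. Clause (a), upward closure under the superset relation, is built into the definition of the enumeration $\Ucal^I_0, \Ucal^I_1, \dots$, so for a class presented as $\Ucal^I_e$ (or $\Ucal^I_C$) it holds automatically and carries no complexity cost; in the general statement one can either restrict attention to such presentations, or observe that "for every $\langle X_\nu\rangle$ and every $\langle Y_\nu \supseteq X_\nu\rangle$, if $\langle X_\nu\rangle \in \Acal$ then $\langle Y_\nu\rangle \in \Acal$" is $\Pi^0_2$ (a $\forall$ over finite data followed by $\Sigma^0_1 \to \Sigma^0_1$, i.e.\ $\Pi^0_1$ relative to the parameter, hence at worst $\Pi^0_2$). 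So the crux is clause (b).

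For clause (b), the naive reading "for every $k$ and every $k$-cover $Y_0, \dots, Y_{k-1}$ of $\omega$, there is some $\langle j_\nu < k : \nu \in I\rangle$ with $\langle Y_{j_\nu} : \nu \in I\rangle \in \Acal$" quantifies universally over all reals $Y_0, \dots, Y_{k-1}$, which is a priori $\Pi^1_1$. The key observation, exactly as in the discussion following \Cref{lem:decreasing-largeness-yields-largeness}, is that $\Acal$ being $\Sigma^{0,Z}_1$ for some $Z \in \Mcal$ means: if \emph{some} $k$-cover witnesses the failure of (b), then by upward closure and the $\Sigma^0_1$-ness one may take the cover to be a partition, and then $\neg$(b) for that partition is a $\Sigma^{0,Z}_1$ statement; hence the collection of partitions witnessing failure is a nonempty $\Sigma^{0,Z}_1$ set of reals, and since $\Mcal$ is a Scott set containing $Z$, one such partition lies in $\Mcal$. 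Therefore (b) holds if and only if (b) holds for every $k$ and every $k$-cover $Y_0 \cup \dots \cup Y_{k-1} = \omega$ \emph{with $Y_0 \oplus \dots \oplus Y_{k-1} \in \Mcal$}, i.e.\ with each $Y_j \leq_T M$. Now the outer quantifier ranges over $k \in \omega$ and over (indices for) $M$-computable $k$-covers — a numerical $\forall$ quantifier — and "there is $\langle j_\nu\rangle$ with $\langle Y_{j_\nu}\rangle \in \Acal$" is a finite disjunction of $\Sigma^0_1$ statements, hence $\Sigma^0_1$. Pulling out the witness for membership in the $\Sigma^0_1$ class $\Acal$ gives an existential number quantifier, so clause (b), suitably reformulated, is $\forall \exists$ over numbers with a decidable matrix, i.e.\ $\Pi^0_2$.

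Combining: "$\Acal$ is a largeness class" is the conjunction of a $\Pi^0_2$ (or weaker) statement coming from clause (a) and a $\Pi^0_2$ statement coming from the Scott-set reformulation of clause (b); a conjunction of two $\Pi^0_2$ statements is $\Pi^0_2$, which proves the lemma. The step I expect to require the most care is the equivalence "(b) fails $\iff$ (b) fails via some $k$-cover in $\Mcal$": one must check that the reduction from an arbitrary witnessing cover to a partition preserves failure (immediate from upward closure in (a)), that failure for a fixed partition is genuinely $\Sigma^{0,Z}_1$ and not merely arithmetic (here it is essential that membership in $\Acal$ is $\Sigma^0_1$, so that $Y_j \notin \Acal$ is $\Pi^0_1$ and "for all $\langle j_\nu\rangle$, $\langle Y_{j_\nu}\rangle \notin \Acal$" is a finite conjunction of $\Pi^0_1$ conditions, hence $\Pi^0_1$, making "$\exists$ such a partition" a $\Sigma^0_1$ question about a real and thus answerable inside the Scott set), and finally that the resulting numerical quantifier over $M$-indices for covers is legitimately $\forall n$ rather than $\forall X$. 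None of these is deep, but getting the complexity bookkeeping exactly right — in particular confirming the matrix is decidable once the membership witnesses are existentially quantified out — is where the proof must be written carefully.
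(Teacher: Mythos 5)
Your route is genuinely different from the paper's, which uses a pure compactness argument with no reference to $\Mcal$: it writes $\Acal = \{\vec X : (\exists \vec\sigma \prec \vec X)\,\varphi(\vec\sigma)\}$ for a $\Sigma^0_1$ formula $\varphi$ on strings, and observes that (b) holds iff for every $k$ there is an $n$ such that \emph{every} $k$-cover $\sigma_0\cup\cdots\cup\sigma_{k-1}=\{0,\dots,n\}$ by finite strings already admits some $\vec j$ with $\varphi(\langle\sigma_{j_\nu}\rangle)$; this is $\forall k\,\exists n\,[\Sigma^0_1]$, hence $\Pi^0_2$, with no auxiliary parameter. You instead push compactness into the Scott set: the witnessing partitions form a nonempty $\Pi^{0,Z}_1$ class (not $\Sigma^{0,Z}_1$, as you first write, though your later parenthetical gets it right), so one lies in $\Mcal$, and the cover quantifier becomes a numerical quantifier over $M$-indices. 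That is a valid alternative for clause (b), but the bookkeeping is not ``a decidable matrix'': the matrix is decidable only relative to $M$, so a priori you obtain a $\Pi^0_2(M)$ sentence and must invoke the lowness of $M$ (giving $\Pi^0_2(M) = \Pi^0_1(M') \subseteq \Pi^0_1(\emptyset') = \Pi^0_2$) to finish --- a step you omit, and one which ties the lemma to the fixed $\Mcal$ rather than giving the clean parameter-free fact the paper proves and then relativizes.

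There is, however, a genuine gap in your handling of clause (a). The displayed sentence \qt{for every $\langle X_\nu\rangle$ and every $\langle Y_\nu\supseteq X_\nu\rangle$, if $\langle X_\nu\rangle\in\Acal$ then $\langle Y_\nu\rangle\in\Acal$} quantifies over reals, not over \qt{finite data}, so as written it is $\Pi^1_1$, not $\Pi^0_2$; and \qt{restrict attention to such presentations} changes the statement, which is for an arbitrary $\Sigma^0_1$ class. To bring (a) down to $\Pi^0_2$ one needs precisely the compactness-to-strings reformulation the paper performs: for an open class, upward closure under $\supseteq$ is captured at the finite level by monotonicity of $\varphi$ on strings, $\forall\vec\sigma\,\forall\vec\tau\,[\sigma_\nu\subseteq\tau_\nu\wedge\varphi(\vec\sigma)\Rightarrow\varphi(\vec\tau)]$, which is $\Pi^0_2$. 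So the string-level compactness step you tried to route around via $\Mcal$ in (b) is unavoidable in (a) --- and once it is in place, the paper discharges (b) at the same finite level, which makes the detour through $\Mcal$ unnecessary and keeps the complexity count parameter-free.
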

\begin{proof}
Say $\Acal = \{ \langle X_\nu : \nu \in I \rangle : (\exists \vec{\sigma} \preceq \vec{X})\varphi(\vec{\sigma}) \}$ where $\varphi$ is a $\Sigma^0_1$ formula.
By compactness, $\Acal$ is a largeness class iff for every $\vec{\sigma} = \langle \sigma_\nu : \nu \in I\rangle$ and $\vec{\tau} = \langle \tau_\nu : \nu \in I \rangle$ such that $\sigma_\nu \subseteq \tau_\nu$ for every $\nu \in I$ and $\varphi(\vec{\sigma})$ holds, $\varphi(\vec{\tau})$ holds, and for every $k$, there is some $n \in \omega$ such that for every $\sigma_0 \cup \dots \cup \sigma_{k-1} = \{0, \dots, n\}$, there is some $\langle j_\nu < k : \nu \in I \rangle$ such that $\varphi(\langle \sigma_{j_\nu} : \nu \in I \rangle)$ holds.
\end{proof}

\begin{definition}
Given $\langle X_\nu : \nu \in I\rangle$, we let
$$
\Lcal_{\langle X_\nu : \nu \in I \rangle} = \{ \langle Y_\nu : \nu \in I \rangle : (\forall \nu \in I) |Y_\nu \cap X_\nu| = \infty  \}
$$
\end{definition}

The following trivial lemma is very useful.

\begin{lemma}\label{lem:lcal-robust-finite-changes}
Let  $\langle X_\nu : \nu \in I\rangle$ and  $\langle Y_\nu : \nu \in I\rangle$
be such that $X_\nu =^{*} Y_\nu$ for every $\nu \in I$.
Then $\Lcal_{\langle X_\nu : \nu \in I\rangle} = \Lcal_{\langle Y_\nu : \nu \in I\rangle}$.
\end{lemma}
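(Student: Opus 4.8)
The statement is purely set-theoretic and follows by unfolding the definition of $\Lcal$. The plan is to show the two classes have the same members. Fix $\langle Z_\nu : \nu \in I \rangle$. For each $\nu \in I$, since $X_\nu =^{*} Y_\nu$, the symmetric difference $X_\nu \triangle Y_\nu$ is finite, hence $Z_\nu \cap X_\nu$ and $Z_\nu \cap Y_\nu$ differ by a finite set; therefore $Z_\nu \cap X_\nu$ is infinite if and only if $Z_\nu \cap Y_\nu$ is infinite. Quantifying over all $\nu \in I$, we get that $(\forall \nu \in I)\,|Z_\nu \cap X_\nu| = \infty$ holds if and only if $(\forall \nu \in I)\,|Z_\nu \cap Y_\nu| = \infty$ holds, i.e. $\langle Z_\nu : \nu \in I\rangle \in \Lcal_{\langle X_\nu : \nu \in I\rangle}$ if and only if $\langle Z_\nu : \nu \in I\rangle \in \Lcal_{\langle Y_\nu : \nu \in I\rangle}$.

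There is no real obstacle here; the only point worth stating is the elementary observation that an infinite set remains infinite after removing finitely many elements (and adding finitely many cannot make an infinite set finite), applied coordinatewise over the finite index set $I$.
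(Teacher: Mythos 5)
Your proof is correct and uses essentially the same argument as the paper: unfold the definition, and note that finitely many changes to $X_\nu$ cannot affect whether $Z_\nu \cap X_\nu$ is infinite. The paper proves one inclusion and appeals to symmetry, whereas you argue the biconditional directly; this is a cosmetic difference.
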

\begin{proof}
By symmetry, it suffices to prove that $\Lcal_{\langle X_\nu : \nu \in I\rangle} \subseteq \Lcal_{\langle Y_\nu : \nu \in I\rangle}$.
Fix $\langle Z_\nu : \nu \in I \rangle \in \Lcal_{\langle X_\nu : \nu \in I\rangle}$. Then for every $\nu \in I$, $Z_\nu \cap X_\nu$ is infinite.
Since $X_\nu =^{*} Y_\nu$, then $Z_\nu \cap Y_\nu$ is infinite, so $\langle Z_\nu : \nu \in I \rangle \in \Lcal_{\langle Y_\nu : \nu \in I\rangle}$.
\end{proof}

\begin{lemma}\label{lem:infinite-intersection-largeness}
Let $\Acal$ be a largeness class. The class
$$
\Lcal(\Acal) = \{ \langle X_\nu : \nu \in I \rangle \in \Acal : \Acal \cap \Lcal_{\langle X_\nu : \nu \in I \rangle} \mbox{ is a largeness class } \}
$$
is a largeness subclass of $\Acal$.
\end{lemma}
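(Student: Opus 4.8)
The plan is to verify the two defining properties of a largeness class for $\Lcal(\Acal)$, and then separately check it is a subclass of $\Acal$ (which is immediate from the definition). Upward closure (property (a)) should be the easy half: suppose $\langle X_\nu : \nu \in I\rangle \in \Lcal(\Acal)$ and $Y_\nu \supseteq X_\nu$ for all $\nu$. Then $\langle Y_\nu : \nu \in I\rangle \in \Acal$ since $\Acal$ is upward-closed, and $\Lcal_{\langle Y_\nu : \nu \in I\rangle} \supseteq \Lcal_{\langle X_\nu : \nu \in I\rangle}$ because a set meeting $X_\nu$ infinitely also meets the larger set $Y_\nu$ infinitely; hence $\Acal \cap \Lcal_{\langle Y_\nu : \nu \in I\rangle} \supseteq \Acal \cap \Lcal_{\langle X_\nu : \nu \in I\rangle}$, and a superclass of a largeness class is a largeness class, so $\langle Y_\nu : \nu \in I\rangle \in \Lcal(\Acal)$.

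The real content is property (b): given a $k$-cover $Y_0, \dots, Y_{k-1}$ of $\omega$, I need some assignment $\langle j_\nu < k : \nu \in I\rangle$ with $\langle Y_{j_\nu} : \nu \in I\rangle \in \Lcal(\Acal)$, i.e.\ both $\langle Y_{j_\nu} : \nu \in I\rangle \in \Acal$ and $\Acal \cap \Lcal_{\langle Y_{j_\nu} : \nu \in I\rangle}$ a largeness class. The natural idea is to iterate: since $\Acal$ is a largeness class there is \emph{some} good assignment putting the tuple into $\Acal$, but I need to control the $\Lcal$ part too. So I would split each $Y_j$ into infinitely many infinite pieces, or rather take a \emph{refinement} of the cover. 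Concretely, replace the cover $Y_0, \dots, Y_{k-1}$ by the finer $k$-cover obtained by intersecting with a second $k$-cover, and more to the point: suppose for contradiction that for \emph{every} assignment $\langle j_\nu\rangle$ with $\langle Y_{j_\nu}\rangle \in \Acal$, the class $\Acal \cap \Lcal_{\langle Y_{j_\nu}\rangle}$ fails to be a largeness class. For each such bad assignment $\vec{j}$ there is, by \Cref{lem:decreasing-largeness-yields-largeness} and the Scott-set argument sketched after it, a finite cover $Z^{\vec j}_0, \dots, Z^{\vec j}_{k_{\vec j}-1}$ of $\omega$ witnessing that $\Acal \cap \Lcal_{\langle Y_{j_\nu}\rangle}$ is not large; combine all of these finitely many covers into a single common refinement $W_0, \dots, W_{m-1}$ of $\omega$ (there are only finitely many assignments since $I$ is finite and $k$ is fixed). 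Now apply property (b) of $\Acal$ to the cover formed by the products $Y_j \cap W_\ell$ — i.e.\ the $km$-cover $\{Y_j \cap W_\ell : j<k,\ \ell<m\}$ — to get an assignment $\nu \mapsto \langle Y_{a_\nu} \cap W_{b_\nu}\rangle \in \Acal$. Then $\langle Y_{a_\nu}\rangle \in \Acal$ by upward closure, so $\vec{a}$ is one of the "bad" assignments, and the $W$-part is refined enough to lie inside one block of the witnessing cover for $\vec a$; but $\langle Y_{a_\nu} \cap W_{b_\nu}\rangle \in \Acal \cap \Lcal_{\langle Y_{a_\nu}\rangle}$ witnesses that this block is not disjoint from $\Acal \cap \Lcal_{\langle Y_{a_\nu}\rangle}$ after all — wait, that is exactly what we want to contradict, since the witnessing cover was supposed to have every block \emph{outside} $\Acal \cap \Lcal_{\langle Y_{a_\nu}\rangle}$. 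Hence no bad assignment exists among those landing in $\Acal$, giving (b).

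The step I expect to be the main obstacle is correctly bookkeeping the double indexing: one must make sure the refined cover $W_0,\dots,W_{m-1}$ simultaneously witnesses non-largeness for \emph{all} finitely many bad assignments (this uses that a common refinement of finitely many finite covers is again a finite cover, and that if a block lies below a block of a non-largeness-witnessing cover it is also outside the class), and that applying property (b) of $\Acal$ to the product cover $\{Y_j \cap W_\ell\}$ genuinely yields a tuple in $\Acal$ whose $Y$-coordinates form an assignment we can feed back into the contradiction hypothesis. Once the indices are lined up this is a clean pigeonhole-style argument; everything else (upward closure, subclass) is routine. I would also remark, as the paper does after similar lemmas, that the finite cover witnessing non-largeness can be taken in $\Mcal$, which is what makes this usable inside the forcing construction, though it is not needed for the bare statement.
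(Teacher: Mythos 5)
Your proposal is correct and follows essentially the same route as the paper: assume $\Lcal(\Acal)$ is not large, fix witnessing covers for each bad assignment, take a common refinement with the original cover, and derive a contradiction from property (b) applied to $\Acal$ on the refined cover; your explicit product cover $\{Y_j \cap W_\ell\}$ is exactly the common refinement the paper invokes. The one point you should make explicit (which the paper also glosses over) is that the refined cover may be taken to consist of infinite, pairwise-disjoint blocks, so that the chosen tuple $\langle Y_{a_\nu}\cap W_{b_\nu}\rangle$ really lies in $\Lcal_{\langle Y_{a_\nu}\rangle}$.
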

\begin{proof}
The class $\Lcal(\Acal)$ is trivially a subclass of $\Acal$. Moreover, $\Lcal(\Acal)$ is upward-closed.
Suppose for the sake of contradiction that $\Lcal(\Acal)$ is not a largeness class.
Then there is a cover $X_0 \cup \dots \cup X_{k-1} = \omega$ such that for every
$\langle j_\nu < k : \nu \in I\rangle$, $\langle X_{j_\nu} : \nu \in I \rangle \not \in \Lcal(\Acal)$. In other words, for every $\langle j_\nu < k : \nu \in I\rangle$, $\Acal \cap \Lcal_{\langle X_{j_\nu} : \nu \in I \rangle}$ is not a largeness class. Thus for every $\langle j_\nu < k : \nu \in I\rangle$, there is a cover $Y_0 \cup \dots \cup Y_{\ell-1}  = \omega$ such that for every $\langle i_\nu < \ell : \nu \in I \rangle$,
$\langle Y_{i_\nu} : \nu \in I \rangle \not \in \Acal \cap \Lcal_{\langle X_{j_\nu} : \nu \in I \rangle}$. By taking the common refinement of all these covers, there is a cover $Z_0 \cup \dots \cup Z_{n-1} = \omega$ such that
for every $\langle j_\nu < k : \nu \in I\rangle$, $\langle Z_{j_\nu} : \nu \in I \rangle \not \in \Acal \cap \Lcal_{\langle Z_{j_\nu} : \nu \in I \rangle}$. Since $\Acal$ is a largeness class, there is some $\langle j_\nu < k : \nu \in I\rangle$ such that $\langle Z_{j_\nu} : \nu \in I \rangle \in \Acal$. Then $\langle Z_{j_\nu} : \nu \in I \rangle \in \Lcal_{\langle Z_{j_\nu} : \nu \in I \rangle}$. Contradiction.
\end{proof}

\begin{lemma}\label{lem:intersection-compatibility}
If $\Acal \cap \Lcal_{\langle X_\nu : \nu \in I \rangle} \cap \Lcal_{\langle Y_\nu : \nu \in I \rangle}$ is a largeness class, then so is $\Acal \cap \Lcal_{\langle X_\nu \cap Y_\nu : \nu \in I \rangle}$.
\end{lemma}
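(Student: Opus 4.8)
The plan is to verify the two axioms of a largeness class directly for $\Acal \cap \Lcal_{\langle X_\nu \cap Y_\nu : \nu \in I\rangle}$. Upward closure is immediate: $\Acal$ is upward closed, and if $|Z_\nu \cap X_\nu \cap Y_\nu| = \infty$ and $Z'_\nu \supseteq Z_\nu$ for every $\nu \in I$, then $|Z'_\nu \cap X_\nu \cap Y_\nu| = \infty$. So all the work is in the cover condition (b), which I would establish by contraposition.

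Assume condition (b) fails for $\Acal \cap \Lcal_{\langle X_\nu \cap Y_\nu\rangle}$: fix a $k$-cover $W_0 \cup \dots \cup W_{k-1} = \omega$ such that for every tuple $\langle j_\nu < k : \nu \in I\rangle$, either $\langle W_{j_\nu} : \nu \in I\rangle \notin \Acal$, or there is some $\nu \in I$ with $W_{j_\nu} \cap X_\nu \cap Y_\nu$ finite. I would then refine this cover along the atoms of the finite Boolean algebra generated by $\{X_\nu : \nu \in I\}$: for $m < k$ and $S \subseteq I$, set
$$
V_{(m, S)} = W_m \cap \bigcap_{\nu \in S} X_\nu \cap \bigcap_{\nu \in I \setminus S} \overline{X_\nu}.
$$
The sets $\langle V_{(m,S)} : m < k,\ S \subseteq I\rangle$ still cover $\omega$. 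Since $\Acal \cap \Lcal_{\langle X_\nu \rangle} \cap \Lcal_{\langle Y_\nu \rangle}$ is a largeness class, its condition (b) applied to this refinement produces a tuple $\langle (m_\nu, S_\nu) : \nu \in I\rangle$ with $\langle V_{(m_\nu, S_\nu)} : \nu \in I\rangle \in \Acal \cap \Lcal_{\langle X_\nu\rangle} \cap \Lcal_{\langle Y_\nu\rangle}$.

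It remains to read off a contradiction. Membership in $\Lcal_{\langle X_\nu \rangle}$ gives $|V_{(m_\nu, S_\nu)} \cap X_\nu| = \infty$ for each $\nu$, which forces $\nu \in S_\nu$ (otherwise $V_{(m_\nu, S_\nu)} \subseteq \overline{X_\nu}$), hence $V_{(m_\nu, S_\nu)} \subseteq W_{m_\nu} \cap X_\nu$. Then $W_{m_\nu} \cap X_\nu \cap Y_\nu \supseteq V_{(m_\nu, S_\nu)} \cap Y_\nu$ is infinite by membership in $\Lcal_{\langle Y_\nu \rangle}$, and $\langle W_{m_\nu} : \nu \in I\rangle \in \Acal$ by upward closure, since $V_{(m_\nu, S_\nu)} \subseteq W_{m_\nu}$ and $\langle V_{(m_\nu, S_\nu)} : \nu \in I\rangle \in \Acal$. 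Taking $j_\nu = m_\nu$ contradicts the choice of the cover $W_0, \dots, W_{k-1}$. The one point worth stating carefully is that each $W_m$ must be split along \emph{all} the $X_\nu$ simultaneously rather than coordinate by coordinate: this is exactly what makes a single refined cover usable in every coordinate at once, and the forced condition $\nu \in S_\nu$ is what transfers the uniform tuple coming from largeness of the larger class back into the $Y_\nu$-requirement. No appeal to effectivity or to \Cref{lem:decreasing-largeness-yields-largeness} is needed; the argument is purely about covers of $\omega$.
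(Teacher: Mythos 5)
Your proof is correct and follows essentially the same route as the paper's: refine the given cover along the $X_\nu$'s, apply largeness of the triple intersection, observe that each selected cell must sit inside $X_\nu$, and use upward closure of $\Acal$ to lift back to the original cover. The only cosmetic differences are the contrapositive framing and your mild economy of splitting only along the $X_\nu$'s rather than along both the $X_\nu$'s and the $Y_\nu$'s as the paper does; you recover the $Y_\nu$-condition directly from $\Lcal_{\langle Y_\nu\rangle}$-membership, which is a slight simplification but not a different argument.
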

\begin{proof}
$\Acal \cap \Lcal_{\langle X_\nu \cap Y_\nu : \nu \in I \rangle}$ is upward-closed.
Let $Z_0 \cup \dots \cup Z_{k-1} = \omega$. By refining the covering, we can assume that for every $j < k$ and $\nu \in I$, $Z_j \subseteq X_\nu$ or $Z_j \cap X_\nu = \emptyset$, and $Z_j \subseteq Y_\nu$ or $Z_j \cap Y_\nu = \emptyset$.
Since $\Acal \cap \Lcal_{\langle X_\nu : \nu \in I \rangle} \cap \Lcal_{\langle Y_\nu : \nu \in I \rangle}$ is a largeness class, there is some $\langle j_\nu < k : \nu \in I\rangle$ such that $\langle Z_{j_\nu} : \nu \in I \rangle \in \Acal \cap \Lcal_{\langle X_\nu : \nu \in I \rangle} \cap \Lcal_{\langle Y_\nu : \nu \in I \rangle}$.
We claim that $Z_{j_\nu} \subseteq X_\nu \cap Y_\nu$ for every $\nu \in I$.
Indeed, since $\langle Z_{j_\nu} : \nu \in I \rangle \in  \Lcal_{\langle X_\nu : \nu \in I \rangle}$, $Z_{j_\nu} \cap X_\nu \neq \emptyset$, so $Z_{j_\nu} \subseteq X_\nu$. Similarly, since $\langle Z_{j_\nu} : \nu \in I \rangle \in \Lcal_{\langle Y_\nu : \nu \in I \rangle}$, $Z_{j_\nu} \subseteq Y_\nu$.
Thus $\langle Z_{j_\nu} : \nu \in I \rangle \in \Acal \cap \Lcal_{\langle X_\nu \cap Y_\nu : \nu \in I \rangle}$.
\end{proof}

\begin{definition}
Given a class $\Acal \subseteq I \to 2^\omega$ and a set $J \subseteq I$, define the projection
$\pi_J(\Acal)$ be the set of all $\langle X_\nu : \nu \in J\rangle$ such that the class
$$
\{\langle X_\nu : \nu \in I - J \rangle : \langle X_\nu : \nu \in I\rangle \in \Acal\}
$$
is a largeness class.
\end{definition}

\begin{lemma}
If $\Acal \subseteq I \to 2^\omega$ is a largeness class and $J \subseteq I$, then $\pi_J(\Acal)$ is a largeness class.
\end{lemma}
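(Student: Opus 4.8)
The plan is to verify the two defining properties of a largeness class for $\pi_J(\Acal)$ directly, in the same style as the proof of \Cref{lem:infinite-intersection-largeness}. For a tuple $\langle X_\nu : \nu \in J\rangle$ write $\Acal[\langle X_\nu : \nu \in J\rangle]$ for the class $\{\langle Y_\nu : \nu \in I-J\rangle : \langle X_\nu : \nu\in J\rangle {}^\frown \langle Y_\nu : \nu \in I-J\rangle \in \Acal\} \subseteq (I-J) \to 2^\omega$ obtained by recombining the two tuples into an $I$-indexed one, so that by definition $\langle X_\nu : \nu \in J\rangle \in \pi_J(\Acal)$ exactly when $\Acal[\langle X_\nu : \nu \in J\rangle]$ is a largeness class. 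Two preliminary observations, both immediate from property (a) of $\Acal$: first, $\Acal[\langle X_\nu : \nu \in J\rangle]$ is always upward-closed (enlarging the $(I-J)$-components preserves membership in $\Acal$); second, if $Y_\nu \supseteq X_\nu$ for all $\nu \in J$ then $\Acal[\langle X_\nu : \nu \in J\rangle] \subseteq \Acal[\langle Y_\nu : \nu \in J\rangle]$ (enlarging the $J$-components preserves membership in $\Acal$).

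Upward-closure of $\pi_J(\Acal)$ is then immediate from the second observation together with the fact that any superclass of a largeness class is a largeness class: if $\langle X_\nu : \nu\in J\rangle \in \pi_J(\Acal)$ and $Y_\nu \supseteq X_\nu$ for $\nu \in J$, then $\Acal[\langle Y_\nu : \nu\in J\rangle] \supseteq \Acal[\langle X_\nu : \nu\in J\rangle]$ is a largeness class, so $\langle Y_\nu : \nu\in J\rangle \in \pi_J(\Acal)$.

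For property (b), fix a $k$-cover $Z_0 \cup \dots \cup Z_{k-1} = \omega$ and argue by contradiction: suppose that for every $\langle j_\nu < k : \nu\in J\rangle$ the class $\Acal[\langle Z_{j_\nu} : \nu\in J\rangle]$ is not a largeness class. Since this class is always upward-closed, its failure to be a largeness class must be a failure of property (b), so for each of the finitely many functions $\vec{\jmath} : J \to \{0,\dots,k-1\}$ there is a cover $C^{\vec{\jmath}}_0 \cup \dots \cup C^{\vec{\jmath}}_{\ell_{\vec{\jmath}}-1} = \omega$ such that for every $\langle i_\nu < \ell_{\vec{\jmath}} : \nu\in I-J\rangle$, $\langle C^{\vec{\jmath}}_{i_\nu} : \nu\in I-J\rangle \notin \Acal[\langle Z_{\vec{\jmath}(\nu)} : \nu\in J\rangle]$. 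Take a common refinement $W_0 \cup \dots \cup W_{m-1} = \omega$ of the cover $\{Z_r : r<k\}$ together with all the covers $C^{\vec{\jmath}}$ (finitely many covers of $\omega$, so a common refinement exists, each $W_t$ being contained in one piece of each original cover). Applying property (b) of $\Acal$ to $\{W_t : t<m\}$ yields $\langle t_\nu < m : \nu\in I\rangle$ with $\langle W_{t_\nu} : \nu\in I\rangle \in \Acal$. For each $\nu\in J$ choose $r_\nu < k$ with $W_{t_\nu} \subseteq Z_{r_\nu}$ and set $\vec{\jmath} = \langle r_\nu : \nu\in J\rangle$; by upward-closure of $\Acal$ we may replace $W_{t_\nu}$ by $Z_{r_\nu}$ for $\nu \in J$, so $\langle W_{t_\nu} : \nu\in I-J\rangle \in \Acal[\langle Z_{r_\nu} : \nu\in J\rangle]$. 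Then for each $\nu\in I-J$ choose $i_\nu < \ell_{\vec{\jmath}}$ with $W_{t_\nu} \subseteq C^{\vec{\jmath}}_{i_\nu}$; upward-closure again gives $\langle C^{\vec{\jmath}}_{i_\nu} : \nu\in I-J\rangle \in \Acal[\langle Z_{r_\nu} : \nu\in J\rangle]$, contradicting the choice of $C^{\vec{\jmath}}$. Hence some $\langle Z_{j_\nu} : \nu\in J\rangle$ lies in $\pi_J(\Acal)$, which is property (b).

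The argument is essentially routine bookkeeping; the only point needing a little care is organizing the common refinement of the $k^{|J|}$ many covers $C^{\vec{\jmath}}$ and checking that the indices line up so that the contradiction lands against the cover $C^{\vec{\jmath}}$ corresponding precisely to the $J$-part $\langle r_\nu : \nu\in J\rangle$ of the selected tuple. I do not anticipate any genuine obstacle.
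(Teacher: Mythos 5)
Your proof is correct and follows essentially the same route as the paper: upward-closure of $\pi_J(\Acal)$ follows from upward-closure of $\Acal$, and the cover property is established by contradiction, refining all the witnessing covers together and contradicting property (b) for $\Acal$. Your version merely makes the final bookkeeping explicit where the paper compresses it into a single sentence.
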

\begin{proof}
The class $\pi_J(\Acal)$ is upward-closed by upward-closure of $\Acal$.
Let $Y_0 \cup \dots \cup Y_{k-1} = \omega$.
Suppose for the sake of contradiction that for every $\langle j_\nu : \nu \in J \rangle$,
$\langle Y_{j_\nu} : \nu \in J\rangle \not \in \pi_J(\Acal)$. Thus for every $\langle j_\nu : \nu \in J \rangle$, the class
$$
\{ \langle X_\nu : \nu \in I - J \rangle : \langle X_\nu : \nu \in I - J \rangle \cup \langle Y_{j_\nu} : \nu \in J \rangle \in \Acal \}
$$
is not a largeness class.
By taking the common refinement of $Y_0 \cup \dots \cup Y_{k-1} = \omega$ with all the covers of $\omega$ witnessing that the classes above are not largeness classes, we obtain a cover $Z_0 \cup \dots \cup Z_{\ell-1} = \omega$ witnessing that $\Acal$ is not a largeness class.
\end{proof}

\begin{lemma}\label{lem:largeness-projection-extension}
Let $\Ucal^I_C \subseteq I \to 2^\omega$ be a largeness class for some $\Delta^0_2$ set $C$,
and $\Acal \subseteq \pi_J(\Ucal^I_C)$ be a $\Pi^0_2$ largeness class.
Then there is a $\Delta^0_2$ set $D \supseteq C$ such that
$\Ucal^I_D \subseteq \Ucal^I_C$ is a largeness class and $\pi_J(\Ucal^I_D) = \Acal$.
\end{lemma}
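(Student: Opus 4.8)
First I would isolate the auxiliary fact that \emph{every upward-closed $\Pi^0_2$ class $\Bcal \subseteq I \to 2^\omega$ equals $\Ucal^I_B$ for some $\Delta^0_2$ set $B$} (this is the present lemma in the degenerate case $C = \emptyset$, $J = I$). Granting it, the lemma follows quickly: apply the auxiliary fact to the cylinder $\Acal^{+} := \{ \langle X_\nu : \nu \in I\rangle : \langle X_\nu : \nu \in J\rangle \in \Acal \}$, which is upward-closed and $\Pi^0_2$ because $\Acal$ is, to get a $\Delta^0_2$ set $B$ with $\Ucal^I_B = \Acal^{+}$; then put $D := C \cup B$, so that $\Ucal^I_D = \Ucal^I_C \cap \Ucal^I_B = \Ucal^I_C \cap \Acal^{+}$.

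For the auxiliary fact, write $\overline{\Bcal} = \bigcup_k F_k$ with $\langle F_k\rangle$ a uniform increasing sequence of $\Pi^0_1$ classes. Since $\Bcal$ is upward-closed, $\overline{\Bcal}$ is downward-closed, hence equals $\bigcup_k F_k^{\downarrow}$ where $F_k^{\downarrow}$ denotes the downward closure. The key step is to check that $F_k^{\downarrow}$ is again $\Pi^0_1$, uniformly in $k$: by compactness of $I \to 2^\omega$, a tuple $\langle Y_\nu : \nu \in I\rangle$ lies outside $F_k^{\downarrow}$ precisely when there is a tuple of finite sets $\langle G_\nu \subseteq Y_\nu : \nu \in I\rangle$ such that no $\langle X_\nu : \nu \in I\rangle \in F_k$ satisfies $X_\nu \supseteq G_\nu$ for all $\nu$ — if no such $\langle G_\nu\rangle$ existed, then taking the finite approximations $G_\nu = Y_\nu \cap [0,m)$ and applying compactness to the corresponding witnesses in $F_k$ would produce an element of $F_k$ lying coordinatewise above $\langle Y_\nu\rangle$. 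The inner condition is the emptiness of a uniformly $\Pi^0_1$ subclass, hence $\Sigma^0_1$ uniformly in $(k, \langle G_\nu\rangle)$, so $\overline{F_k^{\downarrow}}$ is a c.e. union of basic clopen classes, i.e. $\Sigma^0_1$ uniformly in $k$, and upward-closed. Thus $\Bcal = \bigcap_k \overline{F_k^{\downarrow}}$ is a decreasing intersection of a uniformly computable sequence of upward-closed $\Sigma^0_1$ classes. Each of these is a class $\Sigma^0_1$ over $\emptyset \in \Mcal$, so it occurs in the enumeration $\Ucal^I_0, \Ucal^I_1, \dots$; using uniformity of the enumeration I would locate indices $e_0 < e_1 < \cdots$ for them $M$-computably, and then $B := \{ e_k : k \in \omega\}$, being the range of an increasing $\emptyset'$-computable function ($M \leq_T \emptyset'$ since $M$ is low), is $\Delta^0_2$ and satisfies $\Ucal^I_B = \bigcap_k \overline{F_k^{\downarrow}} = \Bcal$.

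It then remains to verify the three conclusions for $D = C \cup B$. Clearly $D$ is $\Delta^0_2$, $D \supseteq C$, and $\Ucal^I_D = \Ucal^I_C \cap \Acal^{+} \subseteq \Ucal^I_C$. For largeness of $\Ucal^I_D$ (visibly upward-closed): given a $k$-cover $Y_0, \dots, Y_{k-1}$ of $\omega$, largeness of $\Acal$ yields $\langle j_\nu < k : \nu \in J\rangle$ with $\langle Y_{j_\nu} : \nu \in J\rangle \in \Acal$; since $\Acal \subseteq \pi_J(\Ucal^I_C)$, the class $\{ \langle Z_\nu : \nu \in I-J\rangle : \langle Z_\nu : \nu \in I-J\rangle \cup \langle Y_{j_\nu} : \nu \in J\rangle \in \Ucal^I_C \}$ is a largeness class, and applying its largeness to the \emph{same} cover extends the assignment to some $\langle j_\nu < k : \nu \in I\rangle$ with $\langle Y_{j_\nu} : \nu \in I\rangle \in \Ucal^I_C$; as also $\langle Y_{j_\nu} : \nu \in J\rangle \in \Acal$, we get $\langle Y_{j_\nu} : \nu \in I\rangle \in \Ucal^I_C \cap \Acal^{+}$. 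Finally $\pi_J(\Ucal^I_D) = \Acal$: for any tuple over $J$, its slice in $\Ucal^I_C \cap \Acal^{+}$ is empty — hence not a largeness class — unless the tuple lies in $\Acal$, in which case the slice coincides with that in $\Ucal^I_C$; hence $\pi_J(\Ucal^I_D) = \Acal \cap \pi_J(\Ucal^I_C) = \Acal$.

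The hard part will be the second paragraph. A $\Pi^0_2$ class does not decompose into upward-closed $\Sigma^0_1$ pieces in the naive way — the natural $\Sigma^0_1$ approximants need not be upward-closed, and replacing them by their upward closures enlarges the intersection — so the whole point is the compactness observation that the downward closure of an effectively closed class stays effectively closed, uniformly, after which complementation produces an honest upward-closed $\Sigma^0_1$ decomposition of $\Bcal$. A minor additional care is to keep $B$ genuinely $\Delta^0_2$ rather than merely $\Sigma^0_2$, for which choosing the indices $e_k$ increasingly is enough.
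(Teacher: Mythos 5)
Your proof is correct and lands on the same object as the paper's: set $\Ucal^I_D := \Ucal^I_C \cap \Acal^{+}$, then check largeness and the projection identity, both of which you verify exactly as the paper does. Where you genuinely add content is in the second paragraph. The paper dispatches the coding step with a bare assertion — ``Since $\Acal$ is a $\Pi^0_2$ class, then so is $\Ucal^I_D$, and therefore $D$ can be chosen to be $\Delta^0_2$'' — while you supply the argument that backs it: an upward-closed $\Pi^0_2$ class need not decompose into upward-closed $\Sigma^0_1$ pieces in the naive way, and the fix is to complement, decompose the downward-closed $\Sigma^0_2$ complement into increasing $\Pi^0_1$ pieces, replace each by its coordinatewise downward closure (which, by compactness of $I \to 2^\omega$ and closedness of the piece, stays $\Pi^0_1$ uniformly and does not change the union), and complement back; passing to increasing indices then keeps $B$, and hence $D = C \cup B$, honestly $\Delta^0_2$ rather than merely $\Sigma^0_2$. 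This is precisely the subtlety the paper elides. One small point worth recording: the step where you ``locate indices $e_k$ $M$-computably'' tacitly uses that the enumeration $\langle \Ucal^I_e \rangle$ is not merely surjective onto upward-closed $\Sigma^{0,Z}_1$ classes for $Z \in \Mcal$ but effectively so, in the sense that from a $\Sigma^0_1$ description of such a class one can $M$-compute an index; that is what the paper's stipulation of a ``uniformly $M$-computable enumeration'' is intended to provide.
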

\begin{proof}
Let $\Ucal^I_D$ be the class of all $\langle X_\nu : \nu \in I \rangle \in \Ucal^I_C$ such that $\langle X_\nu : \nu \in J \rangle \in \Acal$. Since $\Acal$ is a $\Pi^0_2$ class, then so is $\Ucal^I_D$, and therefore $D$ can be chosen to be $\Delta^0_2$. Furthermore we can assume without loss of generality that $D \supseteq C$. By construction, $\Ucal^I_D \subseteq \Ucal^I_C$.

We claim that $\Ucal^I_D$ is a largeness class.
$\Ucal^I_D$ is upward-closed since both $\Ucal^I_C$ and $\Acal$ are.
Let $Y_0 \cup \dots \cup Y_{k-1} = \omega$.
Since $\Acal$ is a largeness subclass of $J \to 2^\omega$, there is some $\langle j_\nu : \nu \in J \rangle$ such that $\langle Y_{j_\nu} : \nu \in J \rangle \in \Acal$. Since $\Acal \subseteq \pi_J(\Ucal^I_C)$, the collection
$$
\{ \langle X_\nu : \nu \in I - J \rangle : \langle X_\nu : \nu \in I - J \rangle \cup \langle Y_{j_\nu} : \nu \in J \rangle \in \Ucal^I_C \}
$$
is a largeness class. Therefore, there is some $\langle j_\nu : \nu \in I - J\rangle$ such that $\langle Y_{j_\nu} : \nu \in I \rangle \in \Ucal^I_C$.
In particular, $\langle Y_{j_\nu} : \nu \in I \rangle \in \Ucal^I_D$.
This proves that $\Ucal^I_D$ is a largeness class.

Last, it is immediate to see that $\pi_J(\Ucal^I_D) = \Acal$.
\end{proof}

\subsection{Valuation}

The notion of valuation is a combinatorial trick of Liu~\cite{Liu2012RT22} to obtain, whenever the $\Sigma^0_2$ outcome cannot be satisfied, arbitrarily many $\Pi^0_2$ formulas such that forcing any two of them is sufficient to force partiality of a Turing functional. We now define the notion of valuation and prove Liu's combinatorial lemma in its full generality (\Cref{lem:combi-liu-valuation}).

\begin{definition}
A \emph{valuation} is a partial function $p \subseteq \omega \to 2$.
A valuation is \emph{$X$-correct} if $p(n) = \Phi^{X}_n(n)$ for all $n \in \dom(p)$. Two valuations $p, q$ are \emph{incompatible} if there is an $n \in \dom(p) \cap \dom(q)$ such that $p(n) \neq q(n)$.
\end{definition}

The following lemma is adapted from Lemma 6.6 in Liu~\cite{Liu2012RT22}.

\begin{lemma}[Liu~\cite{Liu2012RT22}]\label{lem:combi-liu-valuation}
Fix $X$ and $Y \geq_T X$ such that $Y$ is not of PA degree relative to $X$.
Let $W$ be a $Y$-c.e.\  set of valuations.
Either $W$ contains an $X$-correct valuation, or for every $k$,
there are $k$ pairwise incompatible valuations outside of $W$.
\end{lemma}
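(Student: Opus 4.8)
Suppose toward a contradiction that $W$ contains no $X$-correct valuation and that, for some fixed $k$, there are \emph{no} $k$ pairwise incompatible valuations outside $W$. The plan is to use this, together with the fact that $W$ is $Y$-c.e., to produce a completion of $n \mapsto \Phi^X_n(n)$ computable from $Y$, which contradicts the assumption that $Y$ is not of PA degree relative to $X$.

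Identify each $\sigma \in 2^{<\omega}$ with the valuation of domain $\set{0, \dots, |\sigma|-1}$ given by its bits, and consider the downward-closed set
$$
T = \set{ \sigma \in 2^{<\omega} : (\forall m \leq |\sigma|)\ \sigma \uh m \notin W }.
$$
Since $W$ is $Y$-c.e., the predicate \qt{$\sigma \notin T$} is $\Sigma^0_1(Y)$, so $T$ is a $\Pi^0_1(Y)$ tree. I would then record two facts about $T$. First, $T$ is infinite: let $g^*$ be defined by $g^*(n) = \Phi^X_n(n)$ if $\Phi^X_n(n)\halts$ and $g^*(n) = 0$ otherwise; every initial segment $g^* \uh m$ is an $X$-correct valuation, hence lies outside $W$ by hypothesis, so $g^* \uh n \in T$ for every $n$ and thus $g^*$ belongs to the set $[T]$ of paths through $T$. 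Second, $[T]$ has at most $k-1$ elements: were $P_0, \dots, P_{k-1}$ pairwise distinct paths through $T$, then for $N$ large enough $P_0 \uh N, \dots, P_{k-1} \uh N$ would be $k$ pairwise distinct binary strings of length $N$, hence $k$ pairwise incompatible valuations, all of them outside $W$ since they belong to $T$; this contradicts the choice of $k$.

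Hence $[T]$ is a nonempty finite $\Pi^0_1(Y)$ class, and I invoke the classical fact that every member of such a class is computable from $Y$: restricting to a long enough initial segment isolates a given member, turning $[T]$ into a singleton $\Pi^0_1(Y)$ class $\set{P}$, and since the emptiness of a $\Pi^0_1(Y)$ class is $\Sigma^0_1(Y)$, the value $P(n)$ can be read off for each $n$ by asking which of the two subclasses obtained by additionally requiring $Z(n) = 0$, respectively $Z(n) = 1$, becomes empty. Applying this to $g^* \in [T]$ gives $g^* \leq_T Y$. But $g^*$ is a total $\set{0,1}$-valued function agreeing with $n \mapsto \Phi^X_n(n)$ wherever the latter converges, i.e.\ a completion of $n \mapsto \Phi^X_n(n)$; so $Y$ is of PA degree relative to $X$, the desired contradiction. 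This shows that either $W$ contains an $X$-correct valuation, or for every $k$ there are $k$ pairwise incompatible valuations outside $W$.

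The only real idea is the observation that the assumed bound on pairwise incompatible valuations outside $W$ is precisely a bound on the number of paths through $T$; the remainder is bookkeeping plus the standard fact about finite $\Pi^0_1$ classes, so no single step should be seriously difficult. The one point needing care is the tension between valuations being $\set{0,1}$-valued and the a priori possibility that $\Phi^X_n(n)$ converges to a value outside $\set{0,1}$: throughout one works with the partial $\set{0,1}$-valued function obtained from $n \mapsto \Phi^X_n(n)$ by declaring divergence whenever the output is not in $\set{0,1}$ (equivalently, one fixes a $\set{0,1}$-valued universal machine from the outset), so that the function $g^*$ above is genuinely $X$-correct and the argument goes through.
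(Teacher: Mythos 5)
The step asserting that ``every initial segment $g^* \uh m$ is an $X$-correct valuation, hence lies outside $W$'' does not go through. In the paper's sense, a valuation $p$ is $X$-correct only if $\Phi^X_n(n)\downarrow = p(n)$ for every $n \in \dom(p)$; convergence is part of the definition, and this is what is used in \Cref{lem:jump-pa-reformulated}. Since $n \mapsto \Phi^X_n(n)$ diverges on infinitely many inputs, as soon as $m$ exceeds the least divergence point your $g^* \uh m$ assigns the default value $0$ at a divergent $n < m$ and is therefore \emph{not} $X$-correct. The hypothesis that $W$ contains no $X$-correct valuation then says nothing about $g^* \uh m$: for instance, if $\Phi^X_0(0)\uparrow$ and $W$ contains the non-$X$-correct valuation $p$ with $\dom(p) = \{0\}$ and $p(0) = 0$, then $g^* \uh 1 = p \in W$, so $g^* \notin [T]$. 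Your closing caveat handles only the harmless case where $\Phi^X_n(n)$ converges to a value outside $\{0,1\}$, not this divergence issue, which is the real obstacle.

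The underlying mismatch is that by identifying valuations with binary strings you only consider domains that are initial segments, whereas the $k$ pairwise incompatible valuations the lemma must exhibit live on domains of a completely different shape. In the paper's proof they are the $2^{j+1}$ valuations supported on $\{n_0, \dots, n_j\}$, where $n_0, n_1, \dots$ is an \emph{adaptively} constructed sequence of divergence points of $\Phi^X$, extracted from the auxiliary family $S$; the two places where some $\{n_0, \dots, n_j\}$ would otherwise lie in $S$ are each converted into a $Y$-computable completion $h$. A static tree $T \subseteq 2^{<\omega}$ cannot carry out this isolation of divergence points, because it commits a bit at every coordinate, convergent or not. If you weaken ``$X$-correct'' to ``agrees with $\Phi^X_n(n)$ wherever it converges,'' your tree argument becomes correct --- $g^*$ genuinely is a path, distinct paths of $T$ at a common length give pairwise incompatible valuations outside $W$, and members of a finite $\Pi^0_1(Y)$ class are $Y$-computable --- but the lemma so obtained is strictly weaker than the one stated, and is insufficient for the use in \Cref{lem:jump-pa-reformulated}, where $\Phi^{\emptyset'}_n(n)\downarrow$ on $\dom(p)$ is needed.
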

\begin{proof}
Suppose $W$ does not contain any $X$-correct valuation, otherwise we are done.
Let $S$ be the collection of all finite sets $F$ such that for each $n \not \in F$, either $\Phi^X_n(n)\downarrow$ or there is a valuation $p \in W$ such that $F \cup \{n\} \subseteq \dom p$ and for every $m \in \dom p \setminus (F \cup \{n\})$, we have $p(m) = \Phi^X_m(m)\downarrow$. If $F \not \in S$, there there is at least one $n \not \in F$ such that the above does not hold. We say that any such $n$ \emph{witnesses} $F \not \in S$.

First suppose for the sake of contradiction that $\emptyset \in S$. Then for each $n$, either $\Phi^X_n(n)\downarrow$ or there is a valuation $p \in W$
such that $n \in \dom p$, and for every $m \in \dom p \setminus \{n\}$,
$\Phi^X_m(m)\downarrow$. We can then define a $Y$-computable completion $h$ of $n \mapsto \Phi^X_n(n)$ as follows. Given $n$, wait until either $\Phi^X_n(n)\downarrow$, in which case let $h(n) = \Phi^X_n(n)$, or a $p$ as above enters $W$, in which case we let $h(n) = 1-p(n)$. Since $W$ does not contain any $X$-correct valuation, in the latter case, if $\Phi^X_n(n)\downarrow$ then $\Phi^X_n(n) \neq p(n)\downarrow$, so $h(n) = \Phi^X_n(n)$. Since $Y$ is not of PA degree over $X$, this case cannot occur, so $\emptyset \not \in S$.

Let $n_0$ witness the fact that $\emptyset \not \in S$. Given $n_0, \dots, n_j$,
if $\{n_0, \dots, n_j\} \not \in S$, then let $n_{j+1}$ witness this fact. Note that if $n_j$ is defined, then $\Phi^X_{n_j}(n_j)\uparrow$.

Suppose for the sake of contradiction that $\{n_0, \dots, n_j\} \in S$. Then $\{n_0, \dots, n_{j-1}\} \not \in S$, otherwise $n_j$ would not be defined. We can then define a $Y$-computable completion $h$ of $n \mapsto \Phi^X_n(n)$ as follows. First, let $h(n_\ell) = 0$ for $\ell \leq j$. Given $n \not \in \{n_0, \dots, n_j\}$, we wait until either $\Phi^X_n(n)\downarrow$, in which case we let $h(n) = \Phi^X_n(n)$, or a valuation $p$ enters $W$
	such that $\{n_0, \dots, n_j, n\} \subseteq \dom p$ and for every $m \in \dom p \setminus \{n_0, \dots, n_j, n\}$, $p(m) = \Phi^X_m(m)\downarrow$. If $\Phi^X_n(n)\uparrow$, then the latter case must occur, since $\{n_0, \dots, n_j\} \in S$. In this case, we cannot have $p(n) = \Phi^X_n(n)$, as then $p$ would be a counter-example to the fact that $n_j$ witnesses $\{n_0, \dots, n_{j-1}\} \not \in S$. Thus we can let $h(n) = 1-p(n)$. We again have a contradiction since  $Y$ is not of PA degree over $X$.

Thus $\{n_0, \dots, n_j\} \not \in S$ for all $j$. There are $2^{j+1}$ pairwise incompatible valuations with domain $\{n_0, \dots, n_j\}$. None of them can be in $W$, as this would contradict the fact that $n_j$ witnesses $\{n_0, \dots, n_{j-1}\} \not \in S$. This completes the proof.
\end{proof}

\begin{lemma}\label{lem:jump-pa-reformulated}
Let $G$ be a set such that for every $\Delta_0$ formula $\Phi_e(G, x, y, p)$, where $x$ and $y$ are integer variables and $p$ is a valuation variable,
either $(\exists x)(\forall y)\Phi_e(G, x, y, p)$
holds for some $\emptyset'$-correct valuation $p$, or $(\forall x)(\exists y)\neg \Phi_e(G, x, y, p_0)$ and $(\forall x)(\exists y)\neg \Phi_e(G, x, y, p_0)$
hold for two incompatible valuations $p_0$ and $p_1$.
Then $G'$ is not PA over $\emptyset'$.
\end{lemma}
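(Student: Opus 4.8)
The plan is to prove the contrapositive: assuming $G'$ is of PA degree over $\emptyset'$, I will exhibit a single $\Delta_0$ formula $\Phi_e(G,x,y,p)$ for which the dichotomy in the hypothesis fails. Since $G'$ is PA over $\emptyset'$, by the characterization recalled above it computes a total $\{0,1\}$-valued function $h$ with $h(n) = \Phi^{\emptyset'}_n(n)$ whenever $\Phi^{\emptyset'}_n(n)\downarrow$. Fix a Turing functional $\Gamma$ with $\Gamma^{G'} = h$, so that $\Gamma^{G'}$ is total.

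The first step is a complexity computation: for a valuation $p$, the relation $\Gamma^{G'}\not\subseteq p$, that is $(\exists n \in \dom p)(\exists v\neq p(n))\,\Gamma^{G'}(n)\downarrow = v$, is $\Sigma^0_2(G)$ uniformly in $p$. Indeed $G' = \{i : \Phi^G_i(i)\downarrow\}$ is $\Sigma^0_1(G)$, so for a string $\tau$ the condition $\tau \prec G'$ is a finite conjunction of $\Sigma^0_1(G)$ and $\Pi^0_1(G)$ conditions, hence $\Sigma^0_2(G)$; and $\Gamma^{G'}(n)\downarrow = v$ unfolds as $(\exists\tau)[\tau\prec G' \wedge \Gamma^{\tau}(n)\downarrow = v]$, which is therefore $\Sigma^0_2(G)$. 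Thus there is an index $e$ such that for every valuation $p$,
\[
(\exists x)(\forall y)\Phi_e(G,x,y,p) \iff \Gamma^{G'}\not\subseteq p,
\qquad\text{hence}\qquad
(\forall x)(\exists y)\neg\Phi_e(G,x,y,p) \iff \Gamma^{G'}\subseteq p .
\]
(Here $p$ enters $\Phi_e$ only as a parameter, so this genuinely has the required shape.)

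Now I apply the hypothesis to this $e$ and rule out both alternatives. If $(\exists x)(\forall y)\Phi_e(G,x,y,p)$ holds for some $\emptyset'$-correct valuation $p$, then $\Gamma^{G'}(n)\downarrow \neq p(n)$ for some $n \in \dom p$; but $\emptyset'$-correctness forces $p(n) = \Phi^{\emptyset'}_n(n)$ to be defined, so $h(n) = \Phi^{\emptyset'}_n(n) = p(n)$, contradicting $\Gamma^{G'}(n) \neq p(n)$. If instead $(\forall x)(\exists y)\neg\Phi_e(G,x,y,p_0)$ and $(\forall x)(\exists y)\neg\Phi_e(G,x,y,p_1)$ hold for incompatible valuations $p_0, p_1$, then $\Gamma^{G'}\subseteq p_0$ and $\Gamma^{G'}\subseteq p_1$; choosing $n \in \dom p_0 \cap \dom p_1$ with $p_0(n) \neq p_1(n)$ and using that $\Gamma^{G'}(n) = h(n)$ is defined (totality of $\Gamma^{G'}$), we get $h(n) = p_0(n) = p_1(n)$, a contradiction. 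Either way the dichotomy fails for $e$, so the assumption is false and $G'$ is not PA over $\emptyset'$.

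The only genuinely technical point is the uniform $\Sigma^0_2(G)$ bound on $\Gamma^{G'}\not\subseteq p$ — in particular that it can be written as $(\exists x)(\forall y)(\Delta_0)$ with $p$ a free valuation parameter; once this is in hand, the rest is a short diagonalization using the totality of $\Gamma^{G'}$ and the defining property of $\emptyset'$-correct valuations.
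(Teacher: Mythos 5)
Your proposal is correct and follows essentially the same line as the paper's proof: fix a functional $\Gamma$ witnessing that $G'$ is PA over $\emptyset'$, encode $\Gamma^{G'}\not\subseteq p$ as a $\Sigma^0_2$ formula with valuation parameter $p$, and derive a contradiction in each horn of the dichotomy. The only cosmetic differences are that you invoke the $\Sigma^0_2(G)$ normal form abstractly where the paper writes down the $\Delta_0$ matrix explicitly via a stage approximation $\Gamma^{G'}(n)[x+y]$, and in the second case you use totality of $\Gamma^{G'}$ to derive $p_0(n)=p_1(n)$ directly while the paper phrases the same contradiction as $\Gamma^{G'}$ being partial.
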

\begin{proof}
Suppose for the sake of contradiction that $G'$ is of PA degree over $\emptyset'$. In particular, there is a Turing functional $\Gamma$ such that $\Gamma^{G'}$ is a $\{0,1\}$-valued completion of $n \mapsto \Phi^{\emptyset'}_n(n)$. Given $n, s \in \omega$, we denote by $\Gamma^{G'}(n)[s]$ the $G$-computable $s$-approximation of $\Gamma^{G'}(n)$.
Let $\Phi_e(G, x, y, p)$ hold if  there is some $n \in \dom p$ such that if $\Gamma^{G'}(n)[x+y]\downarrow$ then $\Gamma^{G'}(n)[x+y] \neq p(n)$. We have two cases:

Case 1:  $(\exists x)(\forall y)\Phi_e(G, x, y, p)$ holds for some $\emptyset'$-correct valuation $p$. Then there is some $n \in \dom(p)$ such that $\Gamma^{G'}(n)\uparrow$ or $\Gamma^{G'}(n)\downarrow \neq p(n)$. By definition of a $\emptyset'$-correct valuation, $\Phi^{\emptyset'}_n(n)\downarrow = p(n)$ for every $n \in \dom(p)$. Thus  $\Gamma^{G'}$ is not a completion of $n \mapsto \Phi^{\emptyset'}_n(n)$.

Case 2: $(\forall x)(\exists y)\neg \Phi_e(G, x, y, p_0)$ and $(\forall x)(\exists y)\neg \Phi_e(G, x, y, p_0)$  hold for two incompatible valuations $p_0$ and $p_1$. Then $\Gamma^{G'} \upharpoonright \dom p_0 \subseteq p_0$ and $\Gamma^{G'} \upharpoonright \dom p_1 \subseteq p_1$. Since $p_0$ and $p_1$ are incompatible, then $\Gamma^{G'}$ is partial.
\end{proof}

\subsection{Index set}

We now define an ordered structure of sets of indices to simplify branches refinement whenever the $\Pi^0_2$ outcome occurs.
Define the sequence of integers $u_0, u_1, \dots$ inductively by $u_0 = 1$ and $u_{n+1} =  {2u_n+1 \choose 2} u_n$.

\begin{definition}
Given $n \in \omega$, the \emph{$n$-index set} is defined inductively as follows.
The 0-index set $\Ical_0$ is a the singleton empty string $\{\epsilon\}$.
Let $\Ical_n$ be the $n$-index set. The $(n+1)$-index set is the set
$$
\Ical_{n+1} = (2u_n+1) \times \Ical_n = \{ x^\frown \nu : x \leq 2u_n, \nu \in \Ical_n \}
$$
\end{definition}

\begin{definition}
Given $n \in \omega$, an \emph{$n$-index} is defined inductively as follows.
The unique 0-index is the singleton empty string $\{\epsilon\}$.
Given an $n$-index $I \subseteq \Ical_n$, an $(n+1)$-index
is a set $\{ x^\frown \nu : \nu \in I \} \cup \{ y^\frown \nu : \nu \in I \}$ for some $x < y \leq 2u_n$.
\end{definition}

Note that in particular, an $n$-index is a subset of $\Ical_n$. We write $I \lhd \Ical_n$ to say that $I$ is an $n$-index.

\begin{lemma}\label{lem:counting-n-indices}
For every $n \in \omega$, $|\{I \subseteq \Ical_n : I \lhd \Ical_n \}| = u_n$.
\end{lemma}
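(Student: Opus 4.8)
The plan is to prove \Cref{lem:counting-n-indices} by induction on $n$. For the base case $n=0$, the claim is immediate from the definitions: the only subset of $\Ical_0 = \{\epsilon\}$ that is a $0$-index is $\{\epsilon\}$ itself, so the count is $1 = u_0$.

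For the inductive step, assuming that the number of $n$-indices is $u_n$, I would exhibit a bijection between the $(n+1)$-indices and the pairs $(I, \{x,y\})$ where $I \lhd \Ical_n$ and $\{x,y\} \in [\{0, 1, \dots, 2u_n\}]^2$. In one direction, the map is precisely the one built into the definition of an $(n+1)$-index: send a pair $(I, \{x,y\})$ with $x < y$ to $\{x^\frown \nu : \nu \in I\} \cup \{y^\frown \nu : \nu \in I\}$. Surjectivity onto the set of $(n+1)$-indices is then just the definition. For injectivity, given an $(n+1)$-index $J$, I would recover $\{x,y\}$ as the set of all first coordinates of strings appearing in $J$ (so that, since $x<y$, both $x$ and $y$ are determined), and then recover $I$ as $\{\nu : x^\frown \nu \in J\}$. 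Once the bijection is in place, the count is $\binom{2u_n+1}{2} \cdot u_n$: there are $u_n$ choices of $I$ by the inductive hypothesis, and $\binom{2u_n+1}{2}$ choices of the pair $\{x,y\}$ from the $(2u_n+1)$-element set $\{0, \dots, 2u_n\}$. This equals $u_{n+1}$ by the defining recurrence $u_{n+1} = \binom{2u_n+1}{2} u_n$.

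The single point that needs a moment of care is that reading off $\{x,y\}$ from $J$ as its set of first coordinates presupposes $I \neq \emptyset$; otherwise the union $\{x^\frown \nu : \nu \in I\} \cup \{y^\frown \nu : \nu \in I\}$ would be empty and carry no trace of $x$ or $y$, breaking injectivity. So I would first record, by a trivial auxiliary induction, that every $n$-index is non-empty: this holds for $n = 0$ since $\{\epsilon\} \neq \emptyset$, and any $(n+1)$-index contains $\{x^\frown \nu : \nu \in I\}$, which is non-empty as soon as $I$ is. With this observation in hand the argument is entirely routine, and I do not anticipate any real obstacle.
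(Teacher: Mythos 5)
Your proof is correct and follows the same inductive strategy as the paper, whose argument is just a terse version of yours: base case $u_0 = 1$ and inductive step using the recurrence $u_{n+1} = \binom{2u_n+1}{2}u_n$. The explicit bijection and the non-emptiness observation ensuring injectivity are worthwhile elaborations that the paper leaves implicit, but they do not constitute a different approach.
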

\begin{proof}
By induction over $n$. Case $n = 0$. There is only one 0-index and $u_0 = 1$.
Suppose $|\{I \subseteq \Ical_n : I \lhd \Ical_n \}| = u_n$.
Then $|\{J \subseteq \Ical_{n+1} : J \lhd \Ical_{n+1} \}| = |{2u_n+1 \choose 2}| \cdot |\{I \subseteq \Ical_n : I \lhd \Ical_n \}| = {2u_n+1 \choose 2} u_n = u_{n+1}$.
\end{proof}

The following lemma is the main combinatorial lemma of indices, which will be used in \Cref{lem:every-condition-valid-side} to prove that every $\Pb$-condition admits a branch with a valid side.

\begin{lemma}\label{lem:pigeonhole-indices}
For every $n \in \omega$ and every 2-cover $B_0 \cup B_1 = \Ical_n$,
there is an $n$-index $I \lhd \Ical_n$ and some $i < 2$ such that $I \subseteq B_i$.
\end{lemma}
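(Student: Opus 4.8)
The plan is to induct on $n$. The base case $n=0$ is trivial: $\Ical_0 = \{\epsilon\}$, the unique $0$-index is $\{\epsilon\}$, and for any $2$-cover $B_0 \cup B_1 = \{\epsilon\}$ we have $\epsilon \in B_i$ for some $i$, so $\{\epsilon\} \subseteq B_i$. For the inductive step, suppose the statement holds for $n$, and fix a $2$-cover $B_0 \cup B_1 = \Ical_{n+1} = (2u_n+1) \times \Ical_n$. For each $x \le 2u_n$, consider the ``column'' $C_x = \{ x^\frown \nu : \nu \in \Ical_n \}$, which is a copy of $\Ical_n$; the cover restricts to a $2$-cover of $C_x$, hence (identifying $C_x$ with $\Ical_n$) by the induction hypothesis there is an $n$-index $I_x \lhd \Ical_n$ and a color $c(x) < 2$ with $\{ x^\frown \nu : \nu \in I_x \} \subseteq B_{c(x)}$.

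Now I would apply the pigeonhole principle twice. First, by \Cref{lem:counting-n-indices} there are exactly $u_n$ distinct $n$-indices, and at most $2$ colors, so the pair $(I_x, c(x))$ ranges over at most $2u_n$ possible values as $x$ runs over the $2u_n+1$ values $\{0, 1, \dots, 2u_n\}$. Hence there exist two distinct indices $x < y \le 2u_n$ with $(I_x, c(x)) = (I_y, c(y))$; write $I := I_x = I_y$ and $i := c(x) = c(y)$. Then the set $J := \{ x^\frown \nu : \nu \in I \} \cup \{ y^\frown \nu : \nu \in I \}$ is, by definition, an $(n+1)$-index, and $J \subseteq B_i$ since both halves lie in $B_{c(x)} = B_{c(y)} = B_i$. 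This completes the inductive step.

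The only mild subtlety — and the step I would be most careful about — is the bookkeeping in the double pigeonhole: one must check that the constant $2u_n+1$ (rather than just $u_n+1$) is exactly what is needed, because the ``label'' attached to each column is the pair consisting of an $n$-index \emph{and} a color in $2$, giving $2u_n$ possible labels and so requiring $2u_n+1$ columns to force a repetition. Since $\Ical_{n+1}$ is built with precisely $2u_n+1$ columns, this matches exactly, and no slack is lost. Everything else is a routine unwinding of the inductive definitions of $\Ical_{n+1}$ and of ``$(n+1)$-index''.
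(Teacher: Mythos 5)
Your proof is correct and matches the paper's argument essentially line for line: induct on $n$, restrict the cover to each of the $2u_n+1$ columns, apply the induction hypothesis to get a pair $(I_x, i_x)$ per column, and use the pigeonhole principle over the $2u_n$ possible (index, color) pairs to find $x<y$ that agree, yielding the $(n+1)$-index. Your observation about why $2u_n+1$ is exactly the right constant is also the correct reading of why $u_{n+1}$ is defined as it is.
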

\begin{proof}
By induction on $n$. The case $n = 0$ is trivial.
Assume it holds for $n$. We prove it for $n+1$.
For every $x \leq 2u_n$ and $i < 2$, let $B_{x,i} = \{ \nu : x^\frown \nu \in B_i \}$. By induction hypothesis, there is some $I_x \lhd \Ical_n$ and $i_x < 2$ such that $I_x \subseteq B_{x,i_x}$. By Lemma~\ref{lem:counting-n-indices}, $|\{I \subseteq \Ical_n : I \lhd \Ical_n \}| = u_n$ so by the pigeonhole principle, there is some $x < y \leq 2u_n$ such that $I_x = I_y$ and $i_x = i_y$. The $(n+1)$-index $\{ x^\frown \nu : \nu \in I_x \} \cup \{ y^\frown \nu : \nu \in I_x\}$ is included in $B_{i_x}$.
\end{proof}


\begin{definition}
Fix $m \geq n$, $J \lhd \Ical_m$ and $I \lhd \Ical_n$.
\begin{itemize}
	\item[(1)] Define a partial order $J \leq I$ inductively on $m-n$ as follows:
	If $m = n$, then $J \leq I$ if $J = I$.
	If $m > n$, then $J \leq I$ if $K \leq I$ for some $K \lhd \Ical_{m-1}$
	and there are some $x < y \leq 2u_{m-1}$ such that $J = \{ x^\frown \nu : \nu \in K \} \cup \{ y^\frown \nu : \nu \in K \}$.

	\item[(2)] Let $J \bowtie I$ be the set of all $\mu \in \omega^{<\omega}$ such that
	$I = \{ \nu : \mu^\frown \nu \in J \}$.

	\item[(3)] Given a class $\Acal \subseteq I \to 2^\omega$,
	let $J \otimes \Acal$ be the subclass of $J \to 2^\omega$ of all
	$\langle X^{\mu}_\nu : \nu \in I, \mu \in J \bowtie I \rangle$
	such that for every $\mu \in J \bowtie I$, $\langle X^{\mu}_\nu : \nu \in I \rangle \in \Acal$.

	\item[(4)] Given a class $\Acal \subseteq I \to 2^\omega$,
	let $\Ical_n \odot \Acal$ be the subclass of $\Ical_n \to 2^\omega$ of all
	$\langle X_\nu : \nu \in \Ical_n \rangle$ such that $\langle X_\nu : \nu \in I \rangle \in \Acal$.

	\item[(5)] Let $\Acal \subseteq I \to 2^\omega$ and $\Bcal \subseteq J \to 2^\omega$. We write $\Bcal \leq \Acal$ if $J \leq I$
	and $\Bcal \subseteq J \otimes \Acal$.
\end{itemize}
\end{definition}

One can easily prove that the relations $J \leq I$ and $\Bcal \leq \Acal$ are partial orders.


\begin{lemma}\label{lem:bowtie-covers-everything}
Suppose $J \leq I$. Then $J = \{ \mu^\frown \nu : \mu \in  J \bowtie I, \nu \in I \}$.
\end{lemma}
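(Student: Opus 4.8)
The plan is to establish the two inclusions of the claimed equality separately. The inclusion $\{\mu^\frown\nu : \mu \in J\bowtie I, \nu \in I\} \subseteq J$ needs nothing beyond the definition of $\bowtie$: if $\mu \in J\bowtie I$ then $I = \{\nu : \mu^\frown\nu \in J\}$, so $\mu^\frown\nu \in J$ for every $\nu \in I$; note this holds whether or not $J \leq I$. Thus the content of the lemma lies entirely in the reverse inclusion $J \subseteq \{\mu^\frown\nu : \mu \in J\bowtie I, \nu \in I\}$, asserting that every element of $J$ decomposes as a prefix belonging to $J\bowtie I$ followed by an element of $I$. I would prove this by induction on $m - n$.

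For the base case $m = n$, the relation $J \leq I$ forces $J = I$, and $\epsilon \in J\bowtie I$ since $\{\nu : \epsilon^\frown\nu \in J\} = J = I$; hence every $j \in J$ is written $j = \epsilon^\frown j$ with $\epsilon \in J\bowtie I$ and $j \in I$. For the inductive step $m > n$, unfold the definition of $J \leq I$ to obtain some $K \lhd \Ical_{m-1}$ with $K \leq I$ together with $x < y \leq 2u_{m-1}$ such that $J = \{x^\frown\kappa : \kappa \in K\} \cup \{y^\frown\kappa : \kappa \in K\}$. Since $K \leq I$ and $(m-1)-n < m-n$, the induction hypothesis gives $K = \{\mu^\frown\nu : \mu \in K\bowtie I, \nu \in I\}$.

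Now take an arbitrary $j \in J$. By the structure of $J$ we may write $j = x^\frown\kappa$ with $\kappa \in K$, the case $j = y^\frown\kappa$ being symmetric. By the induction hypothesis $\kappa = \mu^\frown\nu$ with $\mu \in K\bowtie I$ and $\nu \in I$. The key step is to verify that $x^\frown\mu \in J\bowtie I$: since $x \neq y$, a string with leading symbol $x$ lies in $J$ if and only if it lies in $\{x^\frown\kappa : \kappa \in K\}$, whence $\{\nu' : (x^\frown\mu)^\frown\nu' \in J\} = \{\nu' : \mu^\frown\nu' \in K\}$, and this last set equals $I$ precisely because $\mu \in K\bowtie I$. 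Therefore $j = (x^\frown\mu)^\frown\nu$ with $x^\frown\mu \in J\bowtie I$ and $\nu \in I$, and symmetrically with $y$ replacing $x$. This proves the reverse inclusion and closes the induction.

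The only step that is not pure bookkeeping is this last verification that $x^\frown\mu$ (and likewise $y^\frown\mu$) lies in $J\bowtie I$, which relies on $x < y$ ensuring that the two halves $\{x^\frown\kappa : \kappa\in K\}$ and $\{y^\frown\kappa : \kappa\in K\}$ of $J$ do not interact once we fix the leading symbol. Everything else reduces to routine manipulation of finite strings and their lengths, so I anticipate no genuine obstacle.
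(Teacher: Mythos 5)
Your proof is correct and follows essentially the same inductive route as the paper (induction on $m-n$, unfolding $J \leq I$ to get $K$, $x < y$, and applying the induction hypothesis to $K \leq I$). The only stylistic difference is that you split the equality into two inclusions and explicitly verify $x^\frown\mu \in J\bowtie I$ using $x \neq y$, whereas the paper directly asserts $J \bowtie I = \{x^\frown\mu : \mu \in K\bowtie I\} \cup \{y^\frown\mu : \mu \in K\bowtie I\}$ (with a typo writing $x$ twice) and computes $J$ from that; your version makes that identification explicit rather than leaving it to the reader.
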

\begin{proof}
Say $J \lhd \Ical_n$ and $I \lhd \Ical_n$ with $m \geq n$.
We prove the lemma by induction over $m - n$.
Suppose $m = n$. Then $I = J$, so $J \bowtie I = \{\epsilon\}$. In particular, $J = \{\epsilon^\frown \nu : \nu \in I\}$.
Suppose $m > n$. By definition of $J \leq I$, there is some $K \lhd \Ical_{m-1}$ and $x < y \leq 2u_{m-1}$ such that $K \leq I$ and $J = \{ x^\frown \nu : \nu \in K \} \cup \{ y^\frown \nu : \nu \in K \}$. By induction hypothesis, $K = \{ \mu^\frown \nu : \mu \in  K \bowtie I, \nu \in I \}$. Thus $J \bowtie I = \{x^\frown \mu : \mu \in K \bowtie I\} \cup \{x^\frown \mu : \mu \in K \bowtie I\}$, and $J = \{ x^\frown \mu^\frown \nu : \mu \in K \bowtie I, \nu \in I\} \cup  \{ y^\frown \mu^\frown \nu : \mu \in K \bowtie I, \nu \in I\} = \{ \mu^\frown \nu : \mu \in  J \bowtie I, \nu \in I \}$.
\end{proof}

\subsection{$\Qb$-forcing}

We now define the partial order of $\Qb$-conditions, which represent branches of $\Pb$-conditions.

\begin{definition}
A $\Qb_n$-condition is a tuple
$(\sigma^0, \sigma^1, X_\nu, \Acal  : \nu \in I)$ where
\begin{itemize}
	\item[(1)] $\sigma^i \subseteq A^i$ for each $i < 2$ ; $I$ is an $n$-index
	\item[(2)] $\Acal \subseteq I \to 2^\omega$ is a largeness subclass of $\Lcal_{\langle X_\nu : \nu \in I \rangle}$
	\item[(3)] $X_\nu \in \Mcal$ for each $\nu \in I$ and $\Acal$ is $\Pi^0_2$
\end{itemize}
\end{definition}

We let $\Qb = \bigcup_n \Qb_n$.

\begin{definition}
The partial order on $\Qb$ is defined by
$$
(\tau^0, \tau^1, Y_\mu, \Bcal  : \mu \in J) \leq (\sigma^0, \sigma^1, X_\nu, \Acal  : \nu \in I)
$$
if $J \leq I$, for every $\mu \in J$ and $\nu \in I$ such that $\nu$ is a suffix of $\mu$, $Y_\mu \subseteq X_\nu$, $\Bcal \leq \Acal$, and for every $i < 2$,
$\sigma^i \preceq \tau^i$ and $\tau^i - \sigma^i \subseteq \bigcup_{\nu \in I} X_\nu$.
\end{definition}

\begin{lemma}\label{lem:qb-extension-compatible-Mathias-extension}
Let $c = (\sigma^0, \sigma^1, X_\nu, \Acal  : \nu \in I) \in \Qb_n$
and $d = (\tau^0, \tau^1, Y_\mu, \Bcal  : \mu \in J) \in \Qb_m$ with $m \geq n$
be such that $d \leq c$. Then for every $i < 2$,
$(\tau^i, \bigcup_{\mu \in J} Y_\mu)$ Mathias extends $(\sigma^i, \bigcup_{\nu \in I} X_\nu)$.
\end{lemma}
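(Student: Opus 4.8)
The plan is to simply unwind the two definitions involved, since the lemma is essentially a bookkeeping statement. Recall that $(\tau, Y)$ Mathias extends $(\sigma, X)$ means $\sigma \preceq \tau$, $Y \subseteq X$, and $\tau - \sigma \subseteq X$. Fix $i < 2$. Two of these three requirements, namely $\sigma^i \preceq \tau^i$ and $\tau^i - \sigma^i \subseteq \bigcup_{\nu \in I} X_\nu$, are literally part of the definition of $d \leq c$ in $\Qb$, so there is nothing to verify for them. Hence the only real content of the lemma is the reservoir inclusion $\bigcup_{\mu \in J} Y_\mu \subseteq \bigcup_{\nu \in I} X_\nu$.

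To establish this, I would first invoke \Cref{lem:bowtie-covers-everything}: since $d \leq c$ gives $J \leq I$, every $\rho \in J$ can be written as $\rho = \mu^\frown \nu$ with $\mu \in J \bowtie I$ and $\nu \in I$. In particular, each $\rho \in J$ has a suffix $\nu$ lying in $I$, so the clause \qt{for every $\mu \in J$ and $\nu \in I$ such that $\nu$ is a suffix of $\mu$, $Y_\mu \subseteq X_\nu$} from the definition of the partial order on $\Qb$ applies and yields $Y_\rho \subseteq X_\nu \subseteq \bigcup_{\nu' \in I} X_{\nu'}$. Taking the union over $\rho \in J$ gives $\bigcup_{\mu \in J} Y_\mu \subseteq \bigcup_{\nu \in I} X_\nu$, which completes the argument. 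I do not anticipate any genuine obstacle; the only point worth making explicit is the appeal to \Cref{lem:bowtie-covers-everything}, which guarantees that the suffix clause in the order definition is non-vacuous for every element of $J$ — without it one could not immediately conclude that each $Y_\rho$ sits inside some $X_\nu$.
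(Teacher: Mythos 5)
Your proof is correct and follows exactly the paper's argument: both invoke \Cref{lem:bowtie-covers-everything} to show each $\mu \in J$ has a suffix $\nu \in I$, conclude $Y_\mu \subseteq X_\nu$ from the order definition to get the reservoir inclusion, and note that the stem conditions come directly from $d \leq c$.
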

\begin{proof}
Since $J \leq I$, then by \Cref{lem:bowtie-covers-everything}, $J = \{\rho^\frown \nu : \rho \in J \bowtie I, \nu \in I\}$.
It follows that for every $\mu \in J$, there is some $\nu \in I$ such that $\nu$ is a suffix of $\mu$, and by definition of $d \leq c$, $Y_\mu \subseteq X_\nu$. Therefore $\bigcup_{\mu \in J} Y_\mu \subseteq \bigcup_{\nu \in I} X_\nu$.
Since $\tau^i \succeq \sigma^i$ and $\tau^i - \sigma^i \subseteq \bigcup_{\nu \in I} X_\nu$, then $(\tau^i, \bigcup_{\mu \in J} Y_\mu)$ Mathias extends $(\sigma^i, \bigcup_{\nu \in I} X_\nu)$.
\end{proof}

\subsection{Forcing relation}

\begin{definition}
Let $(\sigma, X)$ be a Mathias condition and $\Phi_e(G, x)$ be a $\Delta_0$ formula with an integer variable $x$.
\begin{itemize}
	\item[(1)] $(\sigma, X) \Vdash (\exists x)\Phi_e(G, x)$ if there is some $x \in \omega$ such that $\Phi_e(\sigma, x)$ holds.
	\item[(2)] $(\sigma, X) \Vdash (\forall x)\neg \Phi_e(G, x)$ if for every $x \in \omega$ and $\rho \subseteq X$, $\Phi_e(\sigma, x)$ does not hold.
\end{itemize}
\end{definition}

\begin{definition}
Given some $n$-index $I$, let $\zeta_I$ be the function which takes as a paramter an index $e$ of a $\Delta_0$ formula $\Phi_e(G, x, y)$,
a finite set $\sigma \in 2^{<\omega}$ and some integer $x \in \omega$, and returns a code for the $\Sigma^0_1$ class
$$
\Ucal^I_{\zeta_I(e, \sigma, x)} = \{ \langle X_\nu : \nu \in I \rangle : (\sigma, \bigcup_{\nu \in I} X_\nu) \not \Vdash (\forall y)\Phi_e(G, x, y) \}
$$
\end{definition}

\begin{definition}
Let $c = (\sigma^0, \sigma^1, X_\nu, \Acal  : \nu \in I)$ be a $\Qb_n$-condition, $\Phi_e(G, x, y)$ be a $\Delta_0$ formula with free integer variables $x$ and $y$, and let $i < 2$.
\begin{itemize}
	\item[(1)] $c \Vdash^i (\exists x)(\forall y)\Phi_e(G, x, y)$ if there is some $x \in \omega$ such that $(\sigma^i, \bigcup_{\nu \in I} X_\nu) \Vdash (\forall y)\Phi_e(G, x, y)$
	\item[(2)] $c \Vdash^i (\forall x)(\exists y)\neg \Phi_e(G, x, y)$ if for every $x \in \omega$, every $\rho \subseteq A^i \cap \bigcup_{\nu \in I} X_\nu$,
		$\Acal \subseteq \Ucal^I_{\zeta_I(e, \sigma^i \cup \rho, x)}$
\end{itemize}
\end{definition}

\begin{lemma}\label{lem:forcing-relation-extension}
Let $c, d$ be two $\Qb$-conditions such that $d \leq c$, and $\Phi_e(G, x, y)$ be a $\Delta_0$ formula.
\begin{itemize}
	\item[(1)] If $c \Vdash^i (\exists x)(\forall y)\Phi_e(G, x, y)$ then so does $d$.
	\item[(2)] If $c \Vdash^i (\forall x)(\exists y)\neg \Phi_e(G, x, y)$ then so does $d$.
\end{itemize}
\end{lemma}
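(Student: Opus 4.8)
The plan is to unfold both clauses of the $\Qb$-forcing relation and reduce everything to the monotonicity of plain Mathias forcing, glued together by the index bookkeeping of \Cref{lem:qb-extension-compatible-Mathias-extension} and \Cref{lem:bowtie-covers-everything}. Throughout, write $c = (\sigma^0,\sigma^1,X_\nu,\Acal : \nu \in I) \in \Qb_n$ and $d = (\tau^0,\tau^1,Y_\mu,\Bcal : \mu \in J) \in \Qb_m$ with $d \leq c$, and abbreviate $X = \bigcup_{\nu \in I} X_\nu$ and $Y = \bigcup_{\mu \in J} Y_\mu$. The first thing I would record is that \Cref{lem:qb-extension-compatible-Mathias-extension} gives, for each $i < 2$, that $(\tau^i, Y)$ Mathias extends $(\sigma^i, X)$; in particular $\sigma^i \preceq \tau^i$, $\tau^i - \sigma^i \subseteq X$ and $Y \subseteq X$, and moreover $\tau^i \subseteq A^i$ by the definition of a $\Qb$-condition.

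\textbf{Item (1).} Suppose $c \Vdash^i (\exists x)(\forall y)\Phi_e(G,x,y)$, witnessed by some $x$ with $(\sigma^i, X) \Vdash (\forall y)\Phi_e(G,x,y)$. I would use the standard fact that the Mathias forcing relation for a $\Pi^0_1$ formula is preserved under Mathias extension: given $y \in \omega$ and $\rho \subseteq Y$, one has $\tau^i \cup \rho = \sigma^i \cup \big((\tau^i - \sigma^i) \cup \rho\big)$ with $(\tau^i - \sigma^i) \cup \rho \subseteq X$, hence $\Phi_e(\tau^i \cup \rho, x, y)$ holds. Thus $(\tau^i, Y) \Vdash (\forall y)\Phi_e(G,x,y)$, and $d \Vdash^i (\exists x)(\forall y)\Phi_e(G,x,y)$ via the same $x$. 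This step is routine.

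\textbf{Item (2).} Assume $c \Vdash^i (\forall x)(\exists y)\neg\Phi_e(G,x,y)$, i.e.\ for every $x$ and every $\rho \subseteq A^i \cap X$ one has $\Acal \subseteq \Ucal^I_{\zeta_I(e, \sigma^i \cup \rho, x)}$. Fix $x$ and $\rho' \subseteq A^i \cap Y$, and set $\rho := (\tau^i - \sigma^i) \cup \rho'$. Since $\tau^i - \sigma^i \subseteq A^i \cap X$ (from $\tau^i \subseteq A^i$ and $\tau^i - \sigma^i \subseteq X$) and $\rho' \subseteq A^i \cap Y \subseteq A^i \cap X$, we get $\rho \subseteq A^i \cap X$ and $\sigma^i \cup \rho = \tau^i \cup \rho'$, so the hypothesis yields $\Acal \subseteq \Ucal^I_{\zeta_I(e, \tau^i \cup \rho', x)}$. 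Now take any $\langle Y_\mu : \mu \in J\rangle \in \Bcal$. From $\Bcal \leq \Acal$ we have $J \leq I$ and $\Bcal \subseteq J \otimes \Acal$; by \Cref{lem:bowtie-covers-everything} the set $J \bowtie I$ is nonempty, so fixing any $\lambda \in J \bowtie I$ the definition of $J \otimes \Acal$ gives $\langle Y_{\lambda^\frown \nu} : \nu \in I\rangle \in \Acal \subseteq \Ucal^I_{\zeta_I(e, \tau^i \cup \rho', x)}$, that is, $(\tau^i \cup \rho', \bigcup_{\nu \in I} Y_{\lambda^\frown \nu}) \not\Vdash (\forall y)\Phi_e(G,x,y)$. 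Since $\bigcup_{\nu \in I} Y_{\lambda^\frown \nu} \subseteq Y$ and failing to force a $\Pi^0_1$ formula is preserved under enlarging the reservoir (the witnessing finite extension still fits inside the larger reservoir), we conclude $(\tau^i \cup \rho', Y) \not\Vdash (\forall y)\Phi_e(G,x,y)$, i.e.\ $\langle Y_\mu : \mu \in J\rangle \in \Ucal^J_{\zeta_J(e, \tau^i \cup \rho', x)}$. As $\langle Y_\mu : \mu \in J\rangle \in \Bcal$, $x$ and $\rho'$ were arbitrary, this gives $d \Vdash^i (\forall x)(\exists y)\neg\Phi_e(G,x,y)$.

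\textbf{Main obstacle.} The only delicate point, modest as it is, is threading the index bookkeeping in item (2): keeping track of which reservoirs $Y_\mu$ of $d$ lie below which $X_\nu$ of $c$, correctly unpacking $\Bcal \leq \Acal$ and $J \otimes \Acal$ via \Cref{lem:bowtie-covers-everything}, and observing that $\Ucal^I_{\zeta_I}$ and $\Ucal^J_{\zeta_J}$, though defined over different index sets, are linked purely through the union of the reservoir coordinates. Once these definitions are in place, everything reduces to the monotonicity of Mathias forcing, as above.
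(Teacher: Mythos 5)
Your proof is correct and follows essentially the same route as the paper: item (1) reduces to monotonicity of Mathias forcing via \Cref{lem:qb-extension-compatible-Mathias-extension}, and item (2) rewrites the stem $\sigma^i \cup \rho = \tau^i \cup \rho'$ to push $\Acal$ into $\Ucal^I_{\zeta_I(e,\tau^i\cup\rho',x)}$ and then unpacks $\Bcal \subseteq J \otimes \Acal$ via \Cref{lem:bowtie-covers-everything}. The only cosmetic difference is that you fix a single $\lambda \in J \bowtie I$ whereas the paper considers all $\mu \in J \bowtie I$ before taking the big union — both are fine, since not forcing a $\Pi^0_1$ formula is witnessed by a finite subset and persists under enlarging the reservoir; do beware that you reuse the symbol $\langle Y_\mu : \mu \in J\rangle$ for an arbitrary member of $\Bcal$ after already binding it to the reservoirs of $d$, which is harmless here but worth renaming.
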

\begin{proof}
Say $c = (\sigma^0, \sigma^1, X_\nu, \Acal  : \nu \in I) \in \Qb_n$
and $d = (\tau^0, \tau^1, Y_\mu, \Bcal  : \mu \in J) \in \Qb_m$ with $m \geq n$.

(1) Suppose $c \Vdash^i (\exists x)(\forall y)\Phi_e(G, x, y)$. Then there some $x \in \omega$ such that $(\sigma^i, \bigcup_{\nu \in I} X_\nu) \Vdash (\forall y)\Phi_e(G, x, y)$. By \Cref{lem:qb-extension-compatible-Mathias-extension}, $(\tau^i, \bigcup_{\mu \in J} Y_\mu)$ Mathias extends $(\sigma^i, \bigcup_{\nu \in I} X_\nu)$, so  $(\tau^i, \bigcup_{\mu \in J} Y_\mu) \Vdash (\forall y)\Phi_e(G, x, y)$. Folding the definition, $d \Vdash^i (\exists x)(\forall y)\Phi_e(G, x, y)$.

(2)
Fix some $x \in \omega$ and some $\rho \subseteq A^i \cap (\bigcup_{\mu \in J} Y_\mu)$. Since $d \leq c$, then there is some $\rho_0 \subseteq A^i \cap \bigcup_{\nu \in I} X_\nu$ such that $\tau^i = \sigma^i{}^\frown \rho_0$.
Moreover, by \Cref{lem:qb-extension-compatible-Mathias-extension}, $(\tau^i, \bigcup_{\mu \in J} Y_\mu)$ Mathias extends $(\sigma^i, \bigcup_{\nu \in I} X_\nu)$, so $\bigcup_{\mu \in J} Y_\mu \subseteq \bigcup_{\nu \in I} X_\nu$.
Therefore $\rho_0 \cup \rho \subseteq A^i \cap \bigcup_{\nu \in I} X_\nu$.
By applying the definition of $c \Vdash^i (\forall x)(\exists y)\neg \Phi_e(G, x, y)$ to $x$ and $\rho_0 \cup \rho$, $\Acal \subseteq \Ucal^I_{\zeta_I(e, \sigma^i \cup \rho_0 \cup \rho, x)} = \Ucal^I_{\zeta_I(e, \tau^i \cup \rho, x)}$.

We claim that $\Bcal \subseteq \Ucal^J_{\zeta_J(e, \tau^i \cup \rho, x)}$.
Since $\Bcal \leq \Acal$, $\Bcal \subseteq J \otimes \Acal$. Fix some $\langle Z^\mu_\nu : \nu \in I, \mu \in J \bowtie I\rangle$ such that for every $\mu \in J \bowtie I$, $\langle Z^\mu_\nu : \nu \in I\rangle \in \Acal$.
Since $\Acal \subseteq \Ucal^I_{\zeta_I(e, \tau^i \cup \rho, x)}$, for every $\mu \in J \bowtie I$, $(\tau^i, \bigcup_{\nu \in I} Z^\mu_\nu) \not \Vdash (\forall y)\Phi_e(G, x, y)$. Therefore $(\tau^i, \bigcup_{\nu \in I, \mu \in J \bowtie I} Z^\mu_\nu) \not \Vdash (\forall y)\Phi_e(G, x, y)$. Thus  $\langle Z^\mu_\nu : \nu \in I, \mu \in J \bowtie I\rangle \in \Ucal^J_{\zeta_J(e, \tau^i \cup \rho, x)}$. So $\Bcal \subseteq \Ucal^J_{\zeta_J(e, \tau^i \cup \rho, x)}$.
It follows that $d \Vdash^i (\forall x)(\exists y)\neg \Phi_e(G, x, y)$.
\end{proof}

\subsection{$\Pb$-forcing}

\begin{definition}
A $\Pb_n$-condition is a tuple
$(\sigma^0_I, \sigma^1_I, X_\nu, C  : I \lhd \Ical_n, \nu \in \Ical_n)$ where
\begin{itemize}
	\item[(1)] $\sigma^i_I \subseteq A^i$ for each $i < 2$  and $I \lhd \Ical_n$
	\item[(2)] $\Ucal^{\Ical_n}_C \subseteq \Ical_n \to 2^\omega$ is a largeness subclass of $\Lcal_{\langle X_\nu : \nu \in \Ical_n \rangle}$
	\item[(3)] $X_\nu \in \Mcal$ for each $\nu \in \Ical_n$ ; $C$ is $\Delta^0_2$
\end{itemize}
\end{definition}

A $\Pb_n$-condition $c = (\sigma^0_I, \sigma^1_I, X_\nu, C  : I \lhd \Ical_n, \nu \in \Ical_n)$ represents $u_n$ many parallel $\Qb_n$-conditions defined for each $I \lhd \Ical_n$ by
$$
c^{[I]} = (\sigma^0_I, \sigma^1_I, X_\nu, \pi_I(\Ucal_C)  : \nu \in I)
$$

We let $\Pb = \bigcup_n \Pb_n$.

\begin{definition}
The partial order on $\Pb$ is defined by
$$
(\tau^0_J, \tau^1_J, Y_\mu, D  : J \lhd \Ical_m, \mu \in \Ical_m) \leq (\sigma^0_I, \sigma^1_I, X_\nu, C  : I \lhd \Ical_n, \nu \in \Ical_n)
$$
if $m \geq n$, and for every $J \lhd \Ical_m$ and $I \lhd \Ical_n$ such that $J \leq I$
$$
(\tau^0_J, \tau^1_J, Y_\mu, \pi_J(\Ucal_D)  : \mu \in J) \leq (\sigma^0_I, \sigma^1_I, X_\nu, \pi_I(\Ucal_C)  : \nu \in I)
$$
\end{definition}

\begin{lemma}\label{lem:compatibility-extension-qb-pb}
Fix a $\Pb_n$-condition $c$ and some $I \lhd \Ical_n$.
For every $\Qb_n$-condition $d \leq c^{[I]}$, then
there is a $\Pb_n$-condition $e \leq c$ such that $e^{[I]} = d$.
\end{lemma}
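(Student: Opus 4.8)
The plan is to obtain $e$ from $c$ by overwriting the $I$-branch with the data of $d$ while leaving the stems of all other branches untouched; the one delicate point is to produce a single $\Delta^0_2$ index set $D$ for $e$ whose $I$-projection is exactly the largeness class of $d$ and which is simultaneously compatible with the reservoirs of every branch, in particular with the now-shrunken reservoirs on the $I$-branch.

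First I would unwind the hypothesis. Write $c = (\sigma^0_K, \sigma^1_K, X_\nu, C : K\lhd\Ical_n, \nu\in\Ical_n)$, so that $c^{[I]} = (\sigma^0_I, \sigma^1_I, X_\nu, \pi_I(\Ucal_C) : \nu\in I)$. Since $d$ is a $\Qb_n$-condition extending $c^{[I]}$ and both index sets live at level $n$, the definition of $\leq$ on $\Qb$ forces the index set of $d$ to be $I$ itself; hence $d = (\tau^0, \tau^1, Y_\nu, \Bcal : \nu\in I)$, where $\tau^i\succeq\sigma^i_I$ with $\tau^i - \sigma^i_I\subseteq\bigcup_{\nu\in I}X_\nu$, $Y_\nu\subseteq X_\nu$ and $Y_\nu\in\Mcal$ for $\nu\in I$, $\Bcal$ is a $\Pi^0_2$ largeness subclass of $\Lcal_{\langle Y_\nu : \nu\in I\rangle}$, and (using $I\bowtie I = \{\epsilon\}$, whence $I\otimes\pi_I(\Ucal_C) = \pi_I(\Ucal_C)$) $\Bcal\subseteq\pi_I(\Ucal_C)$.

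Next I would apply \Cref{lem:largeness-projection-extension} with ambient index set $\Ical_n$, sub-index set $I$, the set $C$, and the class $\Bcal$: this is permitted since $\Ucal^{\Ical_n}_C$ is a largeness class for the $\Delta^0_2$ set $C$ and $\Bcal$ is a $\Pi^0_2$ largeness class contained in $\pi_I(\Ucal^{\Ical_n}_C)$. It yields a $\Delta^0_2$ set $D\supseteq C$ with $\Ucal^{\Ical_n}_D\subseteq\Ucal^{\Ical_n}_C$ a largeness class and $\pi_I(\Ucal^{\Ical_n}_D) = \Bcal$, and, reading off the construction in its proof, $\Ucal^{\Ical_n}_D = \{\langle Z_\nu : \nu\in\Ical_n\rangle\in\Ucal^{\Ical_n}_C : \langle Z_\nu : \nu\in I\rangle\in\Bcal\}$. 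Then define
$$
e = (\hat\sigma^0_K, \hat\sigma^1_K, \hat X_\nu, D : K\lhd\Ical_n, \nu\in\Ical_n)
$$
by $\hat\sigma^i_I = \tau^i$, $\hat\sigma^i_K = \sigma^i_K$ for $K\neq I$, $\hat X_\nu = Y_\nu$ for $\nu\in I$, and $\hat X_\nu = X_\nu$ for $\nu\in\Ical_n\setminus I$.

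Finally I would verify the three requirements. That $e$ is a $\Pb_n$-condition reduces, apart from the trivialities about the stems and the memberships $\hat X_\nu\in\Mcal$ and $D$ being $\Delta^0_2$, to $\Ucal^{\Ical_n}_D$ being a largeness subclass of $\Lcal_{\langle\hat X_\nu : \nu\in\Ical_n\rangle}$: largeness is given by \Cref{lem:largeness-projection-extension}, and the inclusion into $\Lcal_{\langle\hat X_\nu\rangle}$ is where the explicit form of $\Ucal^{\Ical_n}_D$ is used — any $\langle Z_\nu\rangle$ in it satisfies $\langle Z_\nu : \nu\in I\rangle\in\Bcal\subseteq\Lcal_{\langle Y_\nu\rangle}$ and $\langle Z_\nu : \nu\in\Ical_n\rangle\in\Ucal^{\Ical_n}_C\subseteq\Lcal_{\langle X_\nu\rangle}$, so $|Z_\nu\cap\hat X_\nu| = \infty$ for every $\nu\in\Ical_n$. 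That $e\leq c$ is checked branch by branch: $e^{[I]}\leq c^{[I]}$ is the hypothesis $d\leq c^{[I]}$, and for $K\neq I$ the stems are unchanged, $\hat X_\nu\subseteq X_\nu$ for all $\nu\in K$, and $\pi_K(\Ucal_D)\subseteq\pi_K(\Ucal_C)$ follows from $\Ucal_D\subseteq\Ucal_C$ by monotonicity of the projection. And $e^{[I]} = d$ is immediate, since $e^{[I]} = (\tau^0, \tau^1, Y_\nu, \pi_I(\Ucal^{\Ical_n}_D) : \nu\in I)$ and $\pi_I(\Ucal^{\Ical_n}_D) = \Bcal$. I expect the only real obstacle to be exactly this compatibility of the shrunken reservoirs $Y_\nu$ ($\nu\in I$) with the new largeness index set $D$ across all branches at once, which is why it is cleaner to take $D$ in the explicit projected form coming from the proof of \Cref{lem:largeness-projection-extension} rather than merely invoking its statement; the remainder is bookkeeping with the $\Qb$- and $\Pb$-extension relations.
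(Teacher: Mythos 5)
Your proof is correct and follows the same route as the paper: apply \Cref{lem:largeness-projection-extension} to produce $D$, then overwrite the $I$-branch of $c$ with the data of $d$ while leaving the other branches untouched. Your extra care in invoking the explicit form of $\Ucal^{\Ical_n}_D$ from the \emph{proof} of that lemma (rather than its statement alone) to obtain $\Ucal^{\Ical_n}_D \subseteq \Lcal_{\langle \hat X_\nu : \nu \in \Ical_n\rangle}$ is warranted — the paper glosses over this point, and the projection equality $\pi_I(\Ucal^{\Ical_n}_D) = \Bcal$ by itself does not imply that $\langle Z_\nu : \nu \in I\rangle \in \Bcal$ for every $\langle Z_\nu : \nu \in \Ical_n\rangle \in \Ucal^{\Ical_n}_D$.
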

\begin{proof}
Say $c = (\sigma^0_I, \sigma^1_I, X_\nu, C  : I \lhd \Ical_n, \nu \in \Ical_n)$
and $d = (\tau^0_I, \tau^1_I, Y_\nu, \Acal  : \nu \in I)$.
By \Cref{lem:largeness-projection-extension}, there is some $\Delta^0_2$ set $D \supseteq C$ such that $\Ucal^{\Ical_n}_D \subseteq \Ucal^{\Ical_n}_C$ is a largeness class and $\pi_I(\Ucal^{\Ical_n}_D) = \Acal$.
For every $J \lhd \Ical_n$ with $J \neq I$, let $\tau^0_J = \sigma^0_J$ and $\tau^1_J = \sigma^1_J$. For $\nu \in \Ical_n - I$, let $Y_\nu = X_\nu$.
The $\Pb_n$-condition $e = (\tau^0_J, \tau^1_J, Y_\nu, D  : J \lhd \Ical_n, \nu \in \Ical_n)$
is an extension of $c$ such that $e^{[I]} = d$.
\end{proof}

\subsection{Validity}

As explained in \Cref{sect:outline}, the forcing relation for $\Pi^0_2$ formulas relies on the 1-genericity of the filter for the properties to actually hold. We define the notion of validity so that the forced $\Pi^0_2$ formulas will be satisfied on the valid sides.

\begin{definition}
A $\Qb_n$-condition $(\sigma^0, \sigma^1, X_\nu, \Acal  : \nu \in I)$ is \emph{$i$-valid} for $i < 2$ if $\langle X_\nu \cap A^i : \nu \in I\rangle \in \Acal$.
\end{definition}

The following lemma ensures that whenever a $\Pi^0_2$ formula is forced on a valid side, then seeing the formula as a collection of $\Sigma^0_1$ formulas, one can satisfy each of them independently.

\begin{lemma}\label{lem:valid-side-progress-pi02}
Let $c$ be an $i$-valid $\Qb_n$-condition, and let $\Phi_e(G, x, y)$ be a $\Delta^0_1$ formula.
If $c \Vdash^i (\forall x)(\exists y)\neg \Phi_e(G, x, y)$ then for every $x \in \omega$ there is some $d = (\tau^0, \tau^1, Y_\nu, \Acal  : \nu \in I) \in \Qb_n$ extending $c$ such that $(\tau^i, \bigcup_{\nu \in I} Y_\nu) \Vdash (\exists y)\neg \Phi_e(G, x, y)$.
\end{lemma}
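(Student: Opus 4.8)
The plan is to fix $x \in \omega$ once and for all and build the required extension $d$ explicitly, keeping both the index $I$ and the largeness class $\Acal$ of $c$ unchanged and only lengthening the stem on side $i$ and finitely truncating the reservoirs. Write $c = (\sigma^0, \sigma^1, X_\nu, \Acal : \nu \in I) \in \Qb_n$.

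The core step is to extract a Mathias witness. Applying the definition of $c \Vdash^i (\forall x)(\exists y)\neg\Phi_e(G,x,y)$ to the fixed $x$ and to $\rho = \emptyset$ gives $\Acal \subseteq \Ucal^I_{\zeta_I(e, \sigma^i, x)}$. Since $c$ is $i$-valid, $\langle X_\nu \cap A^i : \nu \in I\rangle \in \Acal$, hence this tuple lies in $\Ucal^I_{\zeta_I(e,\sigma^i,x)}$; unravelling the definition of $\zeta_I$, this says exactly that the Mathias condition $(\sigma^i, A^i \cap \bigcup_{\nu \in I} X_\nu)$ does not force $(\forall y)\Phi_e(G,x,y)$. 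By the Mathias forcing relation for $\Pi^0_1$ formulas, there are therefore some $y \in \omega$ and a finite set $\rho \subseteq A^i \cap \bigcup_{\nu \in I} X_\nu$ with $\neg\Phi_e(\sigma^i \cup \rho, x, y)$. I would then set $\tau^i = \sigma^i \cup \rho$, $\tau^{1-i} = \sigma^{1-i}$, $Y_\nu = X_\nu \setminus \{0,\dots,\max\rho\}$ for each $\nu \in I$, and $d = (\tau^0, \tau^1, Y_\nu, \Acal : \nu \in I)$.

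It then remains to verify that $d$ is a $\Qb_n$-condition, that $d \leq c$, and that $(\tau^i, \bigcup_{\nu\in I}Y_\nu) \Vdash (\exists y)\neg\Phi_e(G,x,y)$. The last is immediate from the choice of $y$, since $\neg\Phi_e(\tau^i,x,y)$ holds. For the first: $\tau^i \subseteq A^i$ since $\sigma^i, \rho \subseteq A^i$; each $Y_\nu$ lies in $\Mcal$ because $\Mcal$ is a Turing ideal and $Y_\nu$ is a finite variant of $X_\nu \in \Mcal$; $\Acal$ is still $\Pi^0_2$; and $\Acal \subseteq \Lcal_{\langle Y_\nu : \nu\in I\rangle}$ holds because $Y_\nu =^{*} X_\nu$, so $\Lcal_{\langle Y_\nu:\nu\in I\rangle} = \Lcal_{\langle X_\nu:\nu\in I\rangle} \supseteq \Acal$ by \Cref{lem:lcal-robust-finite-changes}. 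For $d \leq c$: $I \leq I$ by reflexivity; $Y_\nu \subseteq X_\nu$; $\tau^i - \sigma^i = \rho \subseteq \bigcup_{\nu\in I}X_\nu$ and $\sigma^i \preceq \tau^i$ (as usual, reservoir elements exceed the stem lengths); $\tau^{1-i} = \sigma^{1-i}$; and $\Acal \leq \Acal$ since $I \bowtie I = \{\epsilon\}$ makes $I \otimes \Acal = \Acal$.

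I do not expect a genuine obstacle here; the one point requiring care is that $d$ must carry exactly the same class $\Acal$ as $c$, which is why one shrinks the reservoirs by removing a finite initial segment rather than by intersecting with a smaller set: finite truncation is the reservoir operation that leaves the associated $\Lcal$ class untouched, hence preserves the condition invariant $\Acal \subseteq \Lcal_{\langle X_\nu : \nu \in I\rangle}$, by \Cref{lem:lcal-robust-finite-changes}.
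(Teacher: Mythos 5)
Your proposal is correct and follows essentially the same route as the paper's own proof: apply the forcing definition with $\rho = \emptyset$ to get $\Acal \subseteq \Ucal^I_{\zeta_I(e,\sigma^i,x)}$, feed in $\langle X_\nu \cap A^i : \nu \in I \rangle$ via $i$-validity to obtain a Mathias witness $\rho$ and $y$, then extend the stem on side $i$ by $\rho$, truncate each reservoir past $\max\rho$, keep $\Acal$ unchanged, and invoke \Cref{lem:lcal-robust-finite-changes} to preserve the invariant $\Acal \subseteq \Lcal_{\langle Y_\nu : \nu \in I\rangle}$. The paper's proof is this argument verbatim, just with less of the bookkeeping (verifying $d\in\Qb_n$ and $d\leq c$) spelled out.
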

\begin{proof}
Say $c = (\sigma^0, \sigma^1, X_\nu, \Acal  : \nu \in I)$ and fix $x \in \omega$.
Since $c \Vdash^i (\forall x)(\exists y)\neg \Phi_e(G, x, y)$,
then $\Acal \subseteq \Ucal^I_{\zeta_I(e, \sigma^i, x)}$. Since $c$ is $i$-valid, then $\langle X_\nu \cap A^i : \nu \in I\rangle \in \Acal$, then $(\sigma^i, A^i \cap \bigcup_{\nu \in I} X_\nu) \not \Vdash (\forall y)\Phi_e(G, x, y)$.
Therefore there is some $\rho \subseteq A^i \cap \bigcup_{\nu \in I} X_\nu$ and some $y \in \omega$ such that $\neg \Phi_e(\sigma^i \cup \rho, x, y)$ holds.
Let $\tau^i = \sigma^i \cup \rho$ and $\tau^{1-i} = \sigma^{1-i}$.
For every $\nu \in I$, let $Y_\nu = X_\nu - \{0, \dots, \max \rho \}$.
By \Cref{lem:lcal-robust-finite-changes}, $\Acal \subseteq \Lcal_{\langle Y_\nu : \nu \in I\rangle}$.
The tuple $d = (\tau^0, \tau^1, Y_\nu, \Acal  : \nu \in I)$ is a $\Qb_n$-condition extending $c$. Moreover $(\tau^i, \bigcup_{\nu \in I} Y_\nu) \Vdash (\exists y)\neg \Phi_e(G, x, y)$.
\end{proof}

The following two lemmas state that every $\Pb$-filter induces as tree of valid sides.

\begin{lemma}\label{lem:every-condition-valid-side}
For every $\Pb_n$-condition $c$, there is some $I \lhd \Ical_n$ and some $i < 2$
such that $c^{[I]}$ is $i$-valid.
\end{lemma}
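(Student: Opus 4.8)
The plan is to extract from the $\Pb_n$-condition $c = (\sigma^0_I, \sigma^1_I, X_\nu, C : I\lhd\Ical_n, \nu\in\Ical_n)$ a $2$-colouring $k : \Ical_n \to 2$ of its reservoirs together with a $k$-monochromatic $n$-index, arranged so that the (largeness-)projection defining $c^{[I]}$ is witnessed to be large on the monochromatic colour. Concretely: first I would find $k$ such that $\langle X_\nu \cap A^{k_\nu} : \nu\in\Ical_n\rangle$ lies not merely in $\Ucal^{\Ical_n}_C$ but in the robust subclass $\Lcal(\Ucal^{\Ical_n}_C)$ of \Cref{lem:infinite-intersection-largeness}; then apply \Cref{lem:pigeonhole-indices} to the $2$-cover $B_\ell = \{\nu\in\Ical_n : k_\nu = \ell\}$ of $\Ical_n$ to get an $n$-index $I\lhd\Ical_n$ and a colour $i<2$ with $k_\nu = i$ for every $\nu\in I$; and finally verify that $c^{[I]}$ is $i$-valid.

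For the first step, being a $\Pb_n$-condition means $\Ucal^{\Ical_n}_C$ is a largeness class contained in $\Lcal_{\langle X_\nu : \nu\in\Ical_n\rangle}$, so by \Cref{lem:infinite-intersection-largeness} the subclass $\Lcal(\Ucal^{\Ical_n}_C)$ is again a largeness class, still contained in $\Lcal_{\langle X_\nu : \nu\in\Ical_n\rangle}$. Let $Q_0,\dots,Q_{m-1}$ enumerate the nonempty atoms of the finite Boolean algebra generated by $A$ and $\{X_\nu : \nu\in\Ical_n\}$; these partition $\omega$, so the cover property of $\Lcal(\Ucal^{\Ical_n}_C)$ yields indices $\langle j_\nu < m : \nu\in\Ical_n\rangle$ with $\langle Q_{j_\nu} : \nu\in\Ical_n\rangle \in \Lcal(\Ucal^{\Ical_n}_C)$. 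Since this class lies inside $\Lcal_{\langle X_\nu : \nu\in\Ical_n\rangle}$, each $Q_{j_\nu}$ meets $X_\nu$, hence — being an atom — is contained in $X_\nu$, and is contained in some $A^{k_\nu}$; upward closure of $\Lcal(\Ucal^{\Ical_n}_C)$ then gives $\langle X_\nu\cap A^{k_\nu} : \nu\in\Ical_n\rangle \in \Lcal(\Ucal^{\Ical_n}_C)$, as desired.

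The third step — showing that $c^{[I]} = (\sigma^0_I,\sigma^1_I,X_\nu,\pi_I(\Ucal^{\Ical_n}_C) : \nu\in I)$ is $i$-valid, i.e. $\langle X_\nu\cap A^i : \nu\in I\rangle \in \pi_I(\Ucal^{\Ical_n}_C)$ — is the heart of the matter and the step I expect to be the main obstacle, because unwinding the projection one must check that the slice $\Dcal := \{\langle Z_\nu : \nu\in\Ical_n\setminus I\rangle : \langle X_\nu\cap A^i : \nu\in I\rangle{}^\frown\langle Z_\nu\rangle \in \Ucal^{\Ical_n}_C\}$ is a genuine largeness class, not merely nonempty. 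Upward-closure of $\Dcal$ is immediate from that of $\Ucal^{\Ical_n}_C$. For the cover property, given an $\ell$-cover of $\omega$ I would first refine it so that every piece is contained in or disjoint from each $X_\nu$ and contained in $A^0$ or in $A^1$ (refining only strengthens the conclusion, by upward-closure of $\Dcal$); then apply the cover property of the largeness class $\Ecal := \Ucal^{\Ical_n}_C \cap \Lcal_{\langle X_\nu\cap A^{k_\nu} : \nu\in\Ical_n\rangle}$ — which is large precisely because the first step placed $\langle X_\nu\cap A^{k_\nu}\rangle$ in $\Lcal(\Ucal^{\Ical_n}_C)$ — to obtain a choice $\langle j_\nu : \nu\in\Ical_n\rangle$ with $\langle Y_{j_\nu} : \nu\in\Ical_n\rangle \in \Ecal$. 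On the coordinates $\nu\in I$ (where $k_\nu = i$), membership in $\Lcal_{\langle X_\nu\cap A^{k_\nu}\rangle}$ forces $Y_{j_\nu}$ to meet $X_\nu\cap A^i$, hence, by the refinement, to be contained in $X_\nu\cap A^i$; so upward-closure of $\Ucal^{\Ical_n}_C$ lets me enlarge those coordinates back to $X_\nu\cap A^i$, which exhibits $\langle Y_{j_\nu} : \nu\in\Ical_n\setminus I\rangle \in \Dcal$. This is exactly why the colouring had to land in $\Lcal(\Ucal^{\Ical_n}_C)$ rather than merely in $\Ucal^{\Ical_n}_C$, and why the Boolean-algebra refinement is used on both occasions: to upgrade ``meets $X_\nu$'' to ``is contained in $X_\nu$''. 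Once $\Dcal$ is known to be a largeness class we are done, since $k_\nu = i$ on $I$ turns $\langle X_\nu\cap A^{k_\nu} : \nu\in I\rangle \in \pi_I(\Ucal^{\Ical_n}_C)$ into the required $i$-validity of $c^{[I]}$.
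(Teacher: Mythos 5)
Your proposal is correct and follows the same plan as the paper's proof: first locate a colouring $\langle k_\nu\rangle$ with $\langle X_\nu\cap A^{k_\nu}:\nu\in\Ical_n\rangle$ landing in $\Lcal(\Ucal^{\Ical_n}_C)$ via \Cref{lem:infinite-intersection-largeness}, then apply \Cref{lem:pigeonhole-indices} to extract a monochromatic $n$-index, and finally check $i$-validity by unwinding the projection. The only small divergence is in the first step (you cover by Boolean atoms and appeal to upward-closure, where the paper covers by $\{A^0,A^1\}$ and then invokes \Cref{lem:intersection-compatibility}; these are interchangeable), and you spell out in full the point the paper applies silently — that since the large class $\Ucal^{\Ical_n}_C\cap\Lcal_{\langle X_\nu\cap A^{k_\nu}\rangle}$ is contained in $\Lcal_{\langle X_\nu\cap A^{k_\nu}\rangle}$, freezing the $I$-coordinates at $X_\nu\cap A^i$ still leaves a large slice, whence $\langle X_\nu\cap A^i:\nu\in I\rangle\in\pi_I(\Ucal^{\Ical_n}_C)$ — which is a genuinely non-trivial verification and worth making explicit.
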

\begin{proof}
Say $c = (\sigma^0_I, \sigma^1_I, X_\nu, C  : I \lhd \Ical_n, \nu \in \Ical_n)$.
Since $A^0 \cup A^1 = \omega$ and by \Cref{lem:infinite-intersection-largeness}, $\Lcal(\Ucal^{\Ical_n}_C)$ is a largeness class, then there is some $\langle i_\nu < 2 : \nu \in \Ical_n \rangle$ such that $\langle A^{i_\nu} : \nu \in \Ical_n \rangle \in \Lcal(\Ucal^{\Ical_n}_C)$.
Thus $\Ucal^{\Ical_n}_C \cap \Lcal_{\langle X_\nu : \nu \in \Ical_n \rangle}  \cap \Lcal_{\langle A^{i_\nu} : \nu \in \Ical_n \rangle}$ is a largeness class,
so by \Cref{lem:intersection-compatibility}, $\Ucal^{\Ical_n}_C \cap  \Lcal_{\langle X_\nu \cap A^{i_\nu} : \nu \in \Ical_n \rangle}$ is a largeness class. 

Let $B_0 = \{\nu \in \Ical_n : i_\nu = 0 \}$ and $B_1 = \{\nu \in \Ical_n : i_\nu = 1 \}$. Since $B_0 \cup B_1 = \Ical_n$, by \Cref{lem:pigeonhole-indices}, there is some $I \lhd \Ical_n$ and some $i < 2$ such that $I \subseteq B_i$.
Since $\Ucal^{\Ical_n}_C \cap  \Lcal_{\langle X_\nu \cap A^{i_\nu} : \nu \in \Ical_n \rangle}$ is a largeness class, then 
	$\langle X_\nu \cap A^{i_\nu} : \nu \in I \rangle \in \pi_I(\Ucal^{\Ical_n}_C \cap \Lcal_{\langle X_\nu \cap A^{i_\nu} : \nu \in \Ical_n \rangle})$.
	Moreover $\pi_I(\Ucal^{\Ical_n}_C \cap \Lcal_{\langle X_\nu \cap A^{i_\nu} : \nu \in \Ical_n \rangle}) \subseteq \pi_I(\Ucal^{\Ical_n}_C)$, then 
	$\langle X_\nu \cap A^{i_\nu} : \nu \in I \rangle \in \pi_I(\Ucal^{\Ical_n}_C)$. As $I \subseteq B_i$, 
$\langle X_\nu \cap A^i : \nu \in I \rangle = \langle X_\nu \cap A^{i_\nu} : \nu \in I \rangle \in \pi_I(\Ucal^{\Ical_n}_C)$.
Thus the $\Qb_n$-condition $c^{[I]}$ is $i$-valid.
\end{proof}

\begin{lemma}\label{lem:valid-upward-closed}
Let $d, c \in \Qb$ be such that $d \leq c$.
If $d$ is $i$-valid, then so is $c$.
\end{lemma}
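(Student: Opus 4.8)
The plan is to unfold the definitions and reduce the statement to the upward-closure clause in the definition of largeness class. Write $c = (\sigma^0, \sigma^1, X_\nu, \Acal : \nu \in I) \in \Qb_n$ and $d = (\tau^0, \tau^1, Y_\mu, \Bcal : \mu \in J) \in \Qb_m$ with $m \geq n$ and $d \leq c$. By definition of the partial order on $\Qb$, the relation $d \leq c$ gives in particular $\Bcal \leq \Acal$ together with the reservoir condition $Y_\mu \subseteq X_\nu$ whenever $\nu \in I$ is a suffix of $\mu \in J$. Saying that $d$ is $i$-valid means $\langle Y_\mu \cap A^i : \mu \in J \rangle \in \Bcal$, and we must deduce $\langle X_\nu \cap A^i : \nu \in I \rangle \in \Acal$, which is exactly $i$-validity of $c$.

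First I would unpack $\Bcal \leq \Acal$: by definition this means $J \leq I$ and $\Bcal \subseteq J \otimes \Acal$. Using \Cref{lem:bowtie-covers-everything}, since $J \leq I$ every $\mu \in J$ factors as $\mu = \rho^\frown \nu$ with $\rho \in J \bowtie I$ and $\nu \in I$, so a tuple indexed by $J$ can be rewritten as a tuple indexed by pairs $(\rho,\nu)$ with $\rho \in J \bowtie I$, $\nu \in I$, via $Y^{\rho}_\nu := Y_{\rho^\frown \nu}$. Then membership of $\langle Y_\mu \cap A^i : \mu \in J \rangle$ in $J \otimes \Acal$ unwinds, by definition of $\otimes$, to the assertion that for every $\rho \in J \bowtie I$ the tuple $\langle Y_{\rho^\frown \nu} \cap A^i : \nu \in I \rangle$ belongs to $\Acal$.

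Next, fix any $\rho \in J \bowtie I$; this set is nonempty because $J$ is a nonempty index and $J = \{ \mu^\frown \nu : \mu \in J \bowtie I, \nu \in I \}$ by \Cref{lem:bowtie-covers-everything}. For each $\nu \in I$, the string $\nu$ is a suffix of $\rho^\frown \nu \in J$, so the reservoir clause of $d \leq c$ yields $Y_{\rho^\frown \nu} \subseteq X_\nu$, whence $Y_{\rho^\frown \nu} \cap A^i \subseteq X_\nu \cap A^i$. Since $\Acal$ is a largeness class it is upward-closed (clause (a) of the definition), so from $\langle Y_{\rho^\frown \nu} \cap A^i : \nu \in I \rangle \in \Acal$ I obtain $\langle X_\nu \cap A^i : \nu \in I \rangle \in \Acal$, i.e. $c$ is $i$-valid.

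This is pure bookkeeping with no real obstacle; the only point that needs care is the reindexing of $J$ along $J \bowtie I$ and checking that the factorization $\mu = \rho^\frown \nu$ supplied by \Cref{lem:bowtie-covers-everything} is compatible with the suffix relation appearing in the reservoir clause of the $\Qb$-order, so that the containments $Y_{\rho^\frown\nu} \subseteq X_\nu$ really do apply termwise.
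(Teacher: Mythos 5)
Your proof is correct and follows essentially the same route as the paper's: unpack $\Bcal \leq \Acal$ to get $\Bcal \subseteq J \otimes \Acal$, pick any $\rho \in J \bowtie I$ so that $\langle Y_{\rho^\frown\nu}\cap A^i : \nu\in I\rangle \in \Acal$, and then pass to $\langle X_\nu \cap A^i : \nu\in I\rangle$ by the upward-closure of $\Acal$. You spell out two small points the paper leaves implicit, namely that $J\bowtie I$ is nonempty (via \Cref{lem:bowtie-covers-everything}) and that the reservoir clause $Y_{\rho^\frown\nu}\subseteq X_\nu$ is exactly what licenses the termwise containment needed for upward-closure; both are accurate and make the argument slightly more self-contained.
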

\begin{proof}
Say $c = (\sigma^0, \sigma^1, X_\nu, \Acal  : \nu \in I) \in \Qb_n$
and $d = (\tau^0, \tau^1, Y_\mu, \Bcal  : \mu \in J) \in \Qb_m$ with $m \geq n$.
Since $d$ is $i$-valid, $\langle Y_\mu \cap A^i : \mu \in J\rangle \in \Bcal$.
Since $d \leq c$, then $J \leq I$ and $\Bcal \leq \Acal$. By definition of $\Bcal \leq \Acal$, $\Bcal \subseteq J \otimes \Acal$, thus letting $\rho \in J \bowtie I$, and $Z_\nu = Y_{\rho^\frown \nu}$, $\langle Z_\nu \cap A^i : \nu \in I \rangle \in \Acal$. By upward-closure of $\Acal$, $\langle X_\nu \cap A^i : \nu \in I \rangle \in \Acal$. Thus $c$ is $i$-valid.
\end{proof}

The following lemma states that the generic sets corresponding to valid sides are infinite.

\begin{lemma}\label{lem:valid-side-infinite}
For every $i$-valid $\Qb_n$-condition $c = (\sigma^0, \sigma^1, X_\nu, \Acal  : \nu \in I)$, there is a $\Qb_n$-condition $d = (\tau^0, \tau^1, Y_\nu, \Acal  : \nu \in I) \leq c$ such that $\# \tau^i > \# \sigma^i$.
\end{lemma}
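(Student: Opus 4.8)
The plan is to extend the stem on side $i$ by exactly one fresh element, taken from the intersection of the reservoirs with $A^i$, and to truncate all the reservoirs just above that element; the $i$-validity hypothesis is precisely what supplies such an element.

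First I would unpack $i$-validity. Since $c = (\sigma^0, \sigma^1, X_\nu, \Acal : \nu \in I)$ is $i$-valid, $\langle X_\nu \cap A^i : \nu \in I\rangle \in \Acal$. By clause~(2) of the definition of a $\Qb_n$-condition, $\Acal \subseteq \Lcal_{\langle X_\nu : \nu \in I\rangle}$, so $\langle X_\nu \cap A^i : \nu \in I\rangle \in \Lcal_{\langle X_\nu : \nu \in I\rangle}$, which by the definition of $\Lcal$ means that $X_\nu \cap A^i$ is infinite for every $\nu \in I$. In particular, fixing some $\nu_0 \in I$, the set $A^i \cap X_{\nu_0}$ is infinite, so I can pick $a \in A^i \cap X_{\nu_0}$ with $a \geq \max(|\sigma^0|, |\sigma^1|)$; then $a \notin F_{\sigma^i}$.

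Next I would build the extension. Let $\tau^i$ be the string extending $\sigma^i$ that puts a $1$ at position $a$ (so $F_{\tau^i} = F_{\sigma^i} \cup \{a\}$), let $\tau^{1-i} = \sigma^{1-i}$, and for each $\nu \in I$ set $Y_\nu = X_\nu - \{0, \dots, a\}$. I would then check that $d = (\tau^0, \tau^1, Y_\nu, \Acal : \nu \in I)$ is a $\Qb_n$-condition: $\tau^0 \subseteq A^0$ and $\tau^1 \subseteq A^1$ because $a \in A^i$; each $Y_\nu \in \Mcal$ since truncation is $M$-computable; $\Acal$ is still $\Pi^0_2$ over the same $I$; and, crucially, $X_\nu =^{*} Y_\nu$ for every $\nu \in I$, so by \Cref{lem:lcal-robust-finite-changes} we have $\Lcal_{\langle X_\nu : \nu \in I\rangle} = \Lcal_{\langle Y_\nu : \nu \in I\rangle}$, whence $\Acal$ remains a largeness subclass of $\Lcal_{\langle Y_\nu : \nu \in I\rangle}$.

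Finally I would verify $d \leq c$ in $\Qb$ and conclude. Here $J = I$, so $J \leq I$; since $I \bowtie I = \{\epsilon\}$ we have $I \otimes \Acal = \Acal$, hence $\Acal \leq \Acal$; for each $\nu \in I$ (the only $\mu \in J$ with $\nu$ a suffix of $\mu$ being $\mu = \nu$) we have $Y_\nu \subseteq X_\nu$; and $\sigma^j \preceq \tau^j$ for $j < 2$ with $\tau^i - \sigma^i = \{a\} \subseteq X_{\nu_0} \subseteq \bigcup_{\nu \in I} X_\nu$ and $\tau^{1-i} - \sigma^{1-i} = \emptyset$. Since $a \notin F_{\sigma^i}$, $\# \tau^i = \# \sigma^i + 1 > \# \sigma^i$, as required. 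I do not anticipate a genuine obstacle: the proof is a one-step Mathias-style stem extension, and the only point needing care is that truncating the reservoirs preserves requirement~(2), which is exactly the content of \Cref{lem:lcal-robust-finite-changes}.
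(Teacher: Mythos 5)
Your proof is correct and follows the paper's own argument essentially verbatim: use $i$-validity together with $\Acal \subseteq \Lcal_{\langle X_\nu : \nu \in I\rangle}$ to extract a fresh element of $A^i \cap \bigcup_\nu X_\nu$, add it to the stem on side $i$, truncate each reservoir above it, and invoke \Cref{lem:lcal-robust-finite-changes} to keep $\Acal \subseteq \Lcal_{\langle Y_\nu : \nu \in I\rangle}$. The extra care you take in picking $a$ above $\max(|\sigma^0|,|\sigma^1|)$ and in verifying the $\Qb$-extension clauses is sound bookkeeping that the paper leaves implicit.
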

\begin{proof}
By definition of $i$-validity of $c$, $\langle X_\nu \cap A^i : \nu \in I \rangle \in \Acal \subseteq \Lcal_{\langle X_\nu : \nu \in I \rangle}$. So in particular, $X_\nu \cap A^i$ is infinite. Pick any $x \in \bigcup_{\nu \in I} X_\nu \cap A^i$, and let $Y_\nu = X_\nu - \{0, \dots, x\}$.
By \Cref{lem:lcal-robust-finite-changes}, $\Acal \subseteq \Lcal_{\langle Y_\nu : \nu \in I\rangle}$.
Then  $d = (\sigma^i \cup \{x\}, \sigma^{1-i}, Y_\nu, \Acal  : \nu \in I)$ is the desired extension.
\end{proof}

\subsection{Forcing question}

As explained in \Cref{sect:outline}, a $\Pb$-condition representing multiple parallel $\Qb$-condition, only one of which being valid on one side, we need to force the requirements on each side of each branch. In the following forcing question, the finite set $H$ is intended to be the set of all branches which have not been forced yet.

\begin{definition}
Let $c = (\sigma^0_I, \sigma^1_I, X_\nu, C  : I \lhd \Ical_n, \nu \in \Ical_n) \in \Pb_n$, let $H \subseteq \{ I \lhd \Ical_n \}$, let $\Phi_e(G, x, y)$ be a $\Delta_0$ formula with free variables $x$ and $y$ and let $i < 2$.
Define the relation $c \qvdash^i_H (\exists x)(\forall y)\Phi_e(G, x, y)$
to hold if
$$
\Ucal^{\Ical_n}_C \cap \bigcap \{ \Ical_n \odot \Ucal^I_{\zeta_I(e, \sigma^i_I \cup \rho, x)} : I \in H, x \in \omega, \rho \subseteq A^i \cap \bigcup_{\nu \in I} X_\nu \}
$$
is not a largeness class
\end{definition}

\begin{lemma}
Let $c \in \Pb_n$, let $H \subseteq \{ I \lhd \Ical_n \}$, let $\Phi_e(G, x, y)$ be a $\Delta_0$ formula with free variables $x$ and $y$, and let $i < 2$.
The relation $c \qvdash^i_H (\exists x)(\forall y)\Phi_e(G, x, y)$ is $\Sigma^{0, \emptyset'}_1$.
\end{lemma}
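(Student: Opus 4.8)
The plan is to unfold the definition of $c \qvdash^i_H (\exists x)(\forall y)\Phi_e(G,x,y)$, observe that it asserts the non-largeness of a \emph{countable} intersection of upward-closed $\Sigma^0_1$ classes, use \Cref{lem:decreasing-largeness-yields-largeness} to collapse this to the non-largeness of a \emph{finite} sub-intersection, and then conclude via \Cref{lem:largeness-class-complexity}. Writing $\Ucal^{\Ical_n}_C = \bigcap_{j\in C}\Ucal^{\Ical_n}_j$, the class in the definition is exactly $\bigcap_s \Bcal_s$, where $\Bcal_0,\Bcal_1,\dots$ enumerates the family
$$
\{\, \Ucal^{\Ical_n}_j : j \in C \,\}\ \cup\ \{\, \Ical_n \odot \Ucal^I_{\zeta_I(e,\, \sigma^i_I \cup \rho,\, x)} : I \in H,\ x \in \omega,\ \rho \subseteq A^i \cap \textstyle\bigcup_{\nu \in I} X_\nu \,\}.
$$
Each $\Bcal_s$ is an upward-closed, uniformly $M$-computable $\Sigma^0_1$ subclass of $\Ical_n \to 2^\omega$: the classes $\Ucal^{\Ical_n}_j$ come from the fixed $M$-computable enumeration, $\zeta_I$ returns an index effectively, and $\Ical_n \odot(\cdot)$ preserves being $\Sigma^0_1$ and upward-closed. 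Since $C$ is $\Delta^0_2$, since each $X_\nu \in \Mcal$ is $M$-computable with $M$ low, and since $A$ is $\Delta^0_2$, the elements of $C$ and the relevant triples $(I,x,\rho)$ can be listed by a computation with oracle $\emptyset'$; hence there is a $\emptyset'$-computable function $s \mapsto k_s$ with $k_s$ an index for $\Bcal_s$ as a $\Sigma^{0,M}_1$ class.

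Put $\Acal_s = \Bcal_0 \cap \dots \cap \Bcal_s$. A finite intersection of $\Sigma^{0,M}_1$ classes is again a $\Sigma^{0,M}_1$ class, uniformly in the indices, so an index for $\Acal_s$ as a $\Sigma^{0,M}_1$ class is $\emptyset'$-computable from $s$. The $\Acal_s$ are upward-closed, $\Acal_0 \supseteq \Acal_1 \supseteq \cdots$, and $\bigcap_s \Acal_s$ is the class in the definition. If some $\Acal_s$ is not a largeness class, then neither is $\bigcap_s \Acal_s$, since any superclass of a largeness class is a largeness class; conversely, if every $\Acal_s$ were a largeness class, then so would be $\bigcap_s \Acal_s$ by \Cref{lem:decreasing-largeness-yields-largeness}. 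Thus $c \qvdash^i_H (\exists x)(\forall y)\Phi_e(G,x,y)$ holds if and only if $\Acal_s$ is not a largeness class for some $s$.

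To conclude, I would apply \Cref{lem:largeness-class-complexity} relativized to $M$ and uniformly in an index: "$\Acal_s$ is a largeness class" is $\Pi^{0,M}_2$ in $k_s$, so its negation is $\Sigma^{0,M}_2$, uniformly. Composing with the $\emptyset'$-computable map $s \mapsto k_s$ and using $M' \leq_T \emptyset'$, the predicate "$\Acal_s$ is not a largeness class" of $s$ becomes $\Sigma^{0,\emptyset'}_1$; since $\Sigma^{0,\emptyset'}_1$ is closed under existential number quantification, prefixing $(\exists s)$ leaves it $\Sigma^{0,\emptyset'}_1$, which is the claim. The step I expect to require the most care is the index bookkeeping of the first two paragraphs — checking that the entire family of classes, and indices for them, can be produced with only a $\emptyset'$ oracle (this is exactly where $C$ and $A$ being $\Delta^0_2$, and $M$ being low, all enter) — since without the finitary reduction of \Cref{lem:decreasing-largeness-yields-largeness} the displayed class would a priori sit well above $\Sigma^0_2$.
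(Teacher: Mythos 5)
Your proof is correct and follows essentially the same route as the paper: reduce to a finite sub-intersection via \Cref{lem:decreasing-largeness-yields-largeness}, apply \Cref{lem:largeness-class-complexity} to get a $\Sigma^{0,Z}_2$ predicate, and then use lowness of the oracle and $\Delta^0_2$-ness of $A$ and $C$ to absorb everything into $\Sigma^{0,\emptyset'}_1$. The paper compresses the index bookkeeping into one sentence (``Note that this class is $\Sigma^{0,Z}_1$ for some $Z \in \Mcal$\dots the whole formula is $\Sigma^0_1(A^i \oplus C \oplus \emptyset')$''), while you spell it out; the content is the same.
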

\begin{proof}
By \Cref{lem:decreasing-largeness-yields-largeness}, $c \qvdash^i_H (\exists x)(\forall y)\Phi_e(G, x, y)$ holds if there is a finite set $F \subseteq C$, and some $t \in \omega$ such that the following class
$$
\Ucal^{\Ical_n}_F \cap \bigcap \{ \Ical_n \odot \Ucal^I_{\zeta_I(e, \sigma^i_I \cup \rho, x)} : I \in H, x < t, \rho \subseteq A^i \cap \bigcup_{\nu \in I} X_\nu \uh t \}
$$
is not a largeness class. Note that this class is $\Sigma^{0,Z}_1$ for some $Z \in \Mcal$. By \Cref{lem:largeness-class-complexity},
not being a largeness class for a $\Sigma^{0,Z}_1$ class is $\Sigma^{0,Z}_2$, hence $\Sigma^0_1(\emptyset')$ whenever $Z$ is low. Thus, the whole formula is $\Sigma^0_1(A^i \oplus C \oplus \emptyset')$. Since $A^i$ and $C$ are $\Delta^0_2$, the formula is $\Sigma^0_1(\emptyset')$.
\end{proof}

The following lemma states that in the $\Sigma^0_2$ outcome, one can find an extension forcing the $\Sigma^0_2$ formula on one branch of $H$, which is the set of branches not having been satisfied yet.

\begin{lemma}\label{lem:forcing-question-sigma-case}
Let $c \in \Pb_n$, let $H \subseteq \{ I \lhd \Ical_n \}$, let $\Phi_e(G, x, y)$ be a $\Delta_0$ formula with free variables $x$ and $y$ and let $i < 2$.
Suppose
$$
c \qvdash^i_H (\exists x)(\forall y)\Phi_e(G, x, y)
$$
then there is some $d \in \Pb_n$ with $d \leq c$
and some $I \in H$ such that $d^{[I]} \Vdash^i (\exists x)(\forall y)\Phi_e(G, x, y)$.
\end{lemma}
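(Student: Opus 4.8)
The plan is to reduce the statement to the level of $\Qb_n$-conditions and then lift it back up via \Cref{lem:compatibility-extension-qb-pb}. It suffices to produce some $I \in H$ and a $\Qb_n$-condition $d' \leq c^{[I]}$ with $d' \Vdash^i (\exists x)(\forall y)\Phi_e(G, x, y)$: given this, \Cref{lem:compatibility-extension-qb-pb} yields a $\Pb_n$-condition $d \leq c$ with $d^{[I]} = d'$, which is the desired extension. Write $c = (\sigma^0_I, \sigma^1_I, X_\nu, C : I \lhd \Ical_n, \nu \in \Ical_n)$. Unfolding the forcing question and applying \Cref{lem:decreasing-largeness-yields-largeness} exactly as in the proof that $\qvdash^i_H$ is $\Sigma^{0,\emptyset'}_1$, we fix a finite set $F \subseteq C$ and some $t \in \omega$ such that the class
$$
\Vcal = \Ucal^{\Ical_n}_F \cap \bigcap \{ \Ical_n \odot \Ucal^I_{\zeta_I(e, \sigma^i_I \cup \rho, x)} : I \in H,\ x < t,\ \rho \subseteq A^i \cap \textstyle\bigcup_{\nu \in I} X_\nu \uh t \}
$$
is not a largeness class; this $\Vcal$ is upward-closed and $\Sigma^{0,Z}_1$ for some $Z \in \Mcal$. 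Since $\Mcal$ is a Scott set, we may then fix, as in the discussion following \Cref{lem:decreasing-largeness-yields-largeness}, a $k$-cover $Y_0 \cup \dots \cup Y_{k-1} = \omega$ with $\bigoplus_{j<k} Y_j \in \Mcal$ witnessing that $\Vcal$ is not a largeness class, i.e.\ $\langle Y_{j_\nu} : \nu \in \Ical_n \rangle \notin \Vcal$ for every $\langle j_\nu < k : \nu \in \Ical_n \rangle$.

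Next I would select the right leaf of this cover. Since $\Ucal^{\Ical_n}_C$ is a largeness subclass of $\Lcal_{\langle X_\nu : \nu \in \Ical_n\rangle}$, \Cref{lem:infinite-intersection-largeness} provides a tuple $\langle j_\nu < k : \nu \in \Ical_n\rangle$ with $\langle Y_{j_\nu} : \nu \in \Ical_n \rangle \in \Lcal(\Ucal^{\Ical_n}_C)$; in particular $\langle Y_{j_\nu} : \nu \in \Ical_n\rangle \in \Ucal^{\Ical_n}_C \subseteq \Ucal^{\Ical_n}_F$ and $\Ucal^{\Ical_n}_C \cap \Lcal_{\langle Y_{j_\nu} : \nu \in \Ical_n \rangle}$ is a largeness class, whence, using $\Ucal^{\Ical_n}_C \subseteq \Lcal_{\langle X_\nu : \nu \in \Ical_n\rangle}$ and \Cref{lem:intersection-compatibility}, the class $\Ucal^{\Ical_n}_C \cap \Lcal_{\langle X_\nu \cap Y_{j_\nu} : \nu \in \Ical_n\rangle}$ is a largeness class as well. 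Because $\langle Y_{j_\nu} : \nu \in \Ical_n\rangle \notin \Vcal$ but lies in $\Ucal^{\Ical_n}_F$, there must be some $I \in H$, some $x < t$ and some $\rho \subseteq A^i \cap \bigcup_{\nu \in I} X_\nu \uh t$ with $\langle Y_{j_\nu} : \nu \in \Ical_n\rangle \notin \Ical_n \odot \Ucal^I_{\zeta_I(e, \sigma^i_I \cup \rho, x)}$; by definition of $\odot$ this says $\langle Y_{j_\nu} : \nu \in I\rangle \notin \Ucal^I_{\zeta_I(e, \sigma^i_I \cup \rho, x)}$, that is, $(\sigma^i_I \cup \rho, \bigcup_{\nu \in I} Y_{j_\nu}) \Vdash (\forall y)\Phi_e(G, x, y)$.

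Finally I would assemble the extension. Set $\tau^i = \sigma^i_I \cup \rho$ (legitimate since $\rho \subseteq A^i$ and $\rho \subseteq \bigcup_{\nu \in I} X_\nu$) and $\tau^{1-i} = \sigma^{1-i}_I$; for $\nu \in I$ let $Z_\nu = (X_\nu \cap Y_{j_\nu}) \setminus \{0, \dots, \max \rho\}$, so $Z_\nu \subseteq X_\nu$ and $Z_\nu =^{*} X_\nu \cap Y_{j_\nu}$; let $D \supseteq C$ be a $\Delta^0_2$ set with $\Ucal^{\Ical_n}_D = \Ucal^{\Ical_n}_C \cap \Lcal_{\langle X_\nu \cap Y_{j_\nu} : \nu \in \Ical_n\rangle}$, a largeness class by the previous paragraph; and put $\Bcal = \pi_I(\Ucal^{\Ical_n}_D)$, which is a $\Pi^0_2$ largeness class (by the projection lemma for largeness classes) contained in $\Lcal_{\langle Z_\nu : \nu \in I\rangle}$ (by \Cref{lem:lcal-robust-finite-changes}). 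Then $d' = (\tau^0, \tau^1, Z_\nu, \Bcal : \nu \in I)$ is a $\Qb_n$-condition with $d' \leq c^{[I]} = (\sigma^0_I, \sigma^1_I, X_\nu, \pi_I(\Ucal^{\Ical_n}_C) : \nu \in I)$, since $Z_\nu \subseteq X_\nu$, $\Bcal = \pi_I(\Ucal^{\Ical_n}_D) \subseteq \pi_I(\Ucal^{\Ical_n}_C)$, and $\tau^i - \sigma^i_I = \rho \subseteq \bigcup_{\nu \in I} X_\nu$; and since $\bigcup_{\nu \in I} Z_\nu \subseteq \bigcup_{\nu \in I} Y_{j_\nu}$, monotonicity of Mathias forcing gives $(\tau^i, \bigcup_{\nu \in I} Z_\nu) \Vdash (\forall y)\Phi_e(G, x, y)$, i.e.\ $d' \Vdash^i (\exists x)(\forall y)\Phi_e(G, x, y)$. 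Applying \Cref{lem:compatibility-extension-qb-pb} to $d'$ then finishes the proof. The main obstacle is the double bookkeeping in the middle step: the cover leaf $\langle Y_{j_\nu}\rangle$ must be chosen inside $\Ucal^{\Ical_n}_C$, so that its failure to lie in $\Vcal$ is detected by one of the forcing pieces $\Ical_n \odot \Ucal^I_{\zeta_I}$ rather than by the parameter part $\Ucal^{\Ical_n}_F$, while simultaneously being $\Lcal$-robust with respect to itself, so that refining the original reservoirs $X_\nu$ to $X_\nu \cap Y_{j_\nu}$ does not destroy largeness; \Cref{lem:infinite-intersection-largeness} together with \Cref{lem:intersection-compatibility} is precisely what reconciles these two requirements.
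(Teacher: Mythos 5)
Your proposal is correct and follows the paper's argument essentially verbatim: the same sequence of steps (Lemma~\ref{lem:decreasing-largeness-yields-largeness} to extract a finite, open piece; the Scott set to get a refuting cover; Lemma~\ref{lem:infinite-intersection-largeness} combined with Lemma~\ref{lem:intersection-compatibility} to select a leaf of the cover compatible with $\Ucal^{\Ical_n}_C$ and the reservoirs; then truncating and intersecting to build the extension). The only cosmetic difference is that you assemble the extension at the $\Qb_n$-level and lift it back to $\Pb_n$ via Lemma~\ref{lem:compatibility-extension-qb-pb}, whereas the paper builds the $\Pb_n$-extension directly; this is a sound modularization and changes nothing substantive.
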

\begin{proof}
Say $c = (\sigma^0_I, \sigma^1_I, X_\nu, C  : I \lhd \Ical_n, \nu \in \Ical_n)$.
Since $c \qvdash^i_H (\exists x)(\forall y)\Phi_e(G, x, y)$, then
by \Cref{lem:decreasing-largeness-yields-largeness}, there is a finite set $F \subseteq C$, and some $t \in \omega$ such that the following class
$$
\Ucal^{\Ical_n}_F \cap \bigcap \{ \Ical_n \odot \Ucal^I_{\zeta_I(e, \sigma^i_I \cup \rho, x)} : I \in H, x < t, \rho \subseteq A^i \cap \bigcup_{\nu \in I} X_\nu \uh t \}
$$
is not a largeness class. Since the class is $\Sigma^{0,Y}_1$ for some some $Y \in \Mcal$ and since $\Mcal$ is a Scott set, there is a cover $Z_0 \cup \dots \cup Z_{k-1} = \omega$ in $\Mcal$ such that for every $j < k$, $Z_j \not \in \Ucal^{\Ical_n}_F \cap \bigcap \{ \Ical_n \odot \Ucal^I_{\zeta_I(e, \sigma^i_I \cup \rho, x)} : I \in H, x < n, \rho \subseteq A^i \cap \bigcup_{\nu \in I} X_\nu \uh n \}$.

By \Cref{lem:infinite-intersection-largeness}, $\Lcal(\Ucal^{\Ical_n}_C)$ is a largeness class, then there is some $\langle j_\nu : \nu \in \Ical_n \rangle$ such that $\langle Z_{j_\nu} : \nu \in \Ical_n \rangle \in \Lcal(\Ucal^{\Ical_n}_C)$.
Thus $\Ucal^{\Ical_n}_C \cap \Lcal_{\langle X_\nu : \nu \in \Ical_n \rangle} \cap \Lcal_{\langle Z_{j_\nu} : \nu \in \Ical_n \rangle}$ is a largeness class,
so by \Cref{lem:intersection-compatibility}, the class $\Ucal^{\Ical_n}_C \cap  \Lcal_{\langle X_\nu \cap Z_{j_\nu} : \nu \in \Ical_n \rangle}$ is a largeness class.
In particular $\langle X_\nu \cap Z_{j_\nu} : \nu \in \Ical_n \rangle \in \Ucal^{\Ical_n}_C$, so there is some $I \in H$, some $x < t$ and some $\rho \subseteq A^i \cap \bigcup_{\nu \in I} X_\nu \uh t$ such that
$$
\langle X_\nu \cap Z_{j_\nu} : \nu \in \Ical_n \rangle \not \in \Ical_n \odot \Ucal^I_{\zeta_I(e, \sigma^i_I \cup \rho, x)}
$$
Let $D \supseteq C$ be such that $\Ucal^{\Ical_n}_D = \Ucal^{\Ical_n}_C \cap  \Lcal_{\langle X_\nu \cap Z_{j_\nu} : \nu \in \Ical_n \rangle}$.
For every $\nu \in \Ical_n$, let $Y_\nu : (X_\nu \cap Z_{j_\nu}) - \{0, \dots, t\}$.
In particular, $\Ucal^{\Ical_n}_D \subseteq \Lcal_{\langle Y_\nu : \nu \in \Ical_n \rangle}$.
Let $\tau^i_I = \sigma^i_I \cup \rho$, and $\tau^{1-i}_I = \sigma^{1-i}_I$.
For every $J \lhd \Ical_n$ with $J \neq I$, let $\tau^0_J = \sigma^0_J$ and $\tau^1_J = \sigma^1_J$. The $\Pb_n$-condition $d = (\tau^0_J, \tau^1_J, Y_\nu, D  : J \lhd \Ical_n, \nu \in \Ical_n)$ is an extension of $c$
such that $d^{[I]} \Vdash^i (\exists x)(\forall y)\Phi_e(G, x, y)$ with $I \in H$.
\end{proof}

The following lemma states that whenever sufficiently many formulas have satisfied the $\Pi^0_2$ outcome, then one can find an extension with more branches, such that any branch refining a branch in $H$ will force at least two of the $\Pi^0_2$ formulas. Letting $H$ be the set of branches for which the requirement has not been forced yet, one obtain an extension on which the requirement is forced on all branches simultaneously.

\begin{lemma}\label{lem:forcing-question-pi-case}
Let $c \in \Pb_n$, let $H \subseteq \{ I \lhd \Ical_n \}$, let $\Phi_{e_0}(G, x, y), \dots, \Phi_{e_{2u_n}}(G, x, y)$ be $2u_n+1$ many $\Delta_0$ formulas with free variables $x$ and $y$ and let $i < 2$.
Suppose that for every $j \leq 2u_n$,
$$
c \nqvdash^i_H (\exists x)(\forall y)\Phi_{e_j}(G, x, y)
$$
Then there is some $d \in \Pb_{n+1}$ with $d \leq c$
such that for every $I \in H$ and $J \lhd \Ical_{n+1}$ such that $J \leq I$,
there are some $a < b \leq 2u_n$ such that
$$
d^{[J]} \Vdash^i (\forall x)(\exists y)\neg \Phi_{e_a}(G, x, y)
\hspace{10pt}\mbox{ and }\hspace{10pt}
d^{[J]} \Vdash^i (\forall x)(\exists y)\neg \Phi_{e_b}(G, x, y)
$$
\end{lemma}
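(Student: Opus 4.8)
The plan is to run the $2u_n+1$ formulas in parallel by spreading them over the $2u_n+1$ copies of $\Ical_n$ that sit inside $\Ical_{n+1}$: the formula $\Phi_{e_j}$ will be handled on the $j$-th copy, so that an $(n+1)$-index $J \le I$, which realizes exactly one pair $\{a,b\} \subseteq \{0,\dots,2u_n\}$, automatically inherits the constraints attached to both $\Phi_{e_a}$ and $\Phi_{e_b}$. First I would unfold the hypotheses: for each $j \le 2u_n$ the relation $c \nqvdash^i_H (\exists x)(\forall y)\Phi_{e_j}(G,x,y)$ says precisely that
$$
\Acal_j \;:=\; \Ucal^{\Ical_n}_C \cap \bigcap \bigl\{ \Ical_n \odot \Ucal^I_{\zeta_I(e_j, \sigma^i_I \cup \rho, x)} : I \in H,\ x \in \omega,\ \rho \subseteq A^i \cap \textstyle\bigcup_{\nu \in I} X_\nu \bigr\}
$$
is a largeness class. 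Each term $\Ical_n \odot \Ucal^I_{\zeta_I(\cdots)}$ is an upward-closed $\Sigma^{0,Z}_1$ subclass of $\Ical_n \to 2^\omega$ for some $Z \in \Mcal$, so since $A^i$ and $C$ are $\Delta^0_2$ one can write $\Acal_j = \Ucal^{\Ical_n}_{C_j}$ for a $\Delta^0_2$ set $C_j \supseteq C$ (adjoin to $C$ canonical indices for these classes over all $I \in H$, $x \in \omega$, $\rho \subseteq A^i \cap \bigcup_{\nu \in I} X_\nu$).

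Next I would take the candidate extension $d = (\tau^0_J, \tau^1_J, Y_\mu, D : J \lhd \Ical_{n+1},\ \mu \in \Ical_{n+1})$, leaving the stems unchanged ($\tau^0_J = \sigma^0_I$ and $\tau^1_J = \sigma^1_I$ whenever $J \le I$), repeating the reservoirs ($Y_{j^\frown \nu} = X_\nu$ for all $j \le 2u_n$, $\nu \in \Ical_n$), and choosing a $\Delta^0_2$ set $D$, a computable relabelling of $C_0, \dots, C_{2u_n}$, with
$$
\Ucal^{\Ical_{n+1}}_D \;=\; \bigl\{ \langle X_\mu : \mu \in \Ical_{n+1} \rangle : (\forall j \le 2u_n)\ \langle X_{j^\frown \nu} : \nu \in \Ical_n \rangle \in \Acal_j \bigr\} ,
$$
the \qt{product} of $\Acal_0, \dots, \Acal_{2u_n}$ over the $2u_n+1$ pairwise disjoint copies of $\Ical_n$ in $\Ical_{n+1}$. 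A direct check of the two clauses of the definition (the generalization of the fact that a cartesian product of largeness classes is a largeness class) shows $\Ucal^{\Ical_{n+1}}_D$ is a largeness class, and since each $\Acal_j \subseteq \Ucal^{\Ical_n}_C \subseteq \Lcal_{\langle X_\nu : \nu \in \Ical_n \rangle}$ one gets $\Ucal^{\Ical_{n+1}}_D \subseteq \Lcal_{\langle Y_\mu : \mu \in \Ical_{n+1} \rangle}$; hence $d \in \Pb_{n+1}$. For $d \le c$, the only non-routine point is, for $J \le I$ with $J = \{a^\frown\nu : \nu \in I\} \cup \{b^\frown\nu : \nu \in I\}$ and $a < b \le 2u_n$, that $\pi_J(\Ucal^{\Ical_{n+1}}_D) \le \pi_I(\Ucal^{\Ical_n}_C)$: disjointness of the copies makes $\pi_J$ factor, so $\langle X_\mu : \mu \in J \rangle \in \pi_J(\Ucal^{\Ical_{n+1}}_D)$ iff $\langle X_{a^\frown\nu} : \nu \in I \rangle \in \pi_I(\Acal_a)$ and $\langle X_{b^\frown\nu} : \nu \in I \rangle \in \pi_I(\Acal_b)$; since $\Acal_a, \Acal_b \subseteq \Ucal^{\Ical_n}_C$, both projections lie in $\pi_I(\Ucal^{\Ical_n}_C)$, giving $\pi_J(\Ucal^{\Ical_{n+1}}_D) \subseteq J \otimes \pi_I(\Ucal^{\Ical_n}_C)$.

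For the forcing verification, fix $I \in H$ and $J \le I$ as above; I claim $d^{[J]} \Vdash^i (\forall x)(\exists y)\neg\Phi_{e_a}(G,x,y)$, the case of $e_b$ being identical with copy $b$ in place of copy $a$. Fix $x \in \omega$ and $\rho \subseteq A^i \cap \bigcup_{\mu \in J} Y_\mu$, which equals $A^i \cap \bigcup_{\nu \in I} X_\nu$ since $Y_{a^\frown\nu} = Y_{b^\frown\nu} = X_\nu$. Since $I \in H$, the definition of $C_a$ gives $\Acal_a \subseteq \Ical_n \odot \Ucal^I_{\zeta_I(e_a, \sigma^i_I \cup \rho, x)}$. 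Take any $\langle X_\mu : \mu \in J\rangle \in \pi_J(\Ucal^{\Ical_{n+1}}_D)$; then $\langle X_{a^\frown\nu} : \nu \in I \rangle \in \pi_I(\Acal_a)$, so by definition of $\pi_I$ there are $Z_\nu$ for $\nu \in \Ical_n - I$ with $\langle X_{a^\frown\nu} : \nu \in I\rangle \cup \langle Z_\nu : \nu \in \Ical_n - I\rangle \in \Acal_a$, whence $\langle X_{a^\frown\nu} : \nu \in I\rangle \in \Ucal^I_{\zeta_I(e_a, \sigma^i_I \cup \rho, x)}$, that is, $(\sigma^i_I \cup \rho,\ \bigcup_{\nu \in I} X_{a^\frown\nu}) \not\Vdash (\forall y)\Phi_{e_a}(G,x,y)$. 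As $\bigcup_{\nu \in I} X_{a^\frown\nu} \subseteq \bigcup_{\mu \in J} X_\mu$ and the relation $(\sigma,X) \not\Vdash (\forall y)\Phi$ is monotone in $X$, $(\sigma^i_I \cup \rho,\ \bigcup_{\mu \in J} X_\mu) \not\Vdash (\forall y)\Phi_{e_a}(G,x,y)$, i.e.\ $\langle X_\mu : \mu \in J\rangle \in \Ucal^J_{\zeta_J(e_a, \sigma^i_I \cup \rho, x)}$. So $\pi_J(\Ucal^{\Ical_{n+1}}_D) \subseteq \Ucal^J_{\zeta_J(e_a, \sigma^i_I \cup \rho, x)}$; as $x$ and $\rho$ were arbitrary, this is exactly the statement $d^{[J]} \Vdash^i (\forall x)(\exists y)\neg\Phi_{e_a}(G,x,y)$.

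I expect the main difficulty to be the index-set bookkeeping rather than any conceptual obstacle: making the \qt{product over $\Ical_{n+1}$} precise and verifying it is a largeness class, checking that the projections $\pi_J$ respect this product structure so that $d \le c$, and confirming that the relabelled $D$ is genuinely $\Delta^0_2$ (and $\pi_J(\Ucal^{\Ical_{n+1}}_D)$ genuinely $\Pi^0_2$, so each $d^{[J]}$ is a legitimate $\Qb$-condition). This is also where the count $2u_n+1$ matters: an $(n+1)$-index $J \le I$ realizes an \emph{arbitrary} unordered pair $\{a,b\} \subseteq \{0,\dots,2u_n\}$, so a distinct formula must have been placed on each of the $2u_n+1$ copies in order for both $\Phi_{e_a}$ and $\Phi_{e_b}$ to get forced on $d^{[J]}$ — which is precisely what is needed when this lemma is combined with \Cref{lem:every-condition-valid-side} and Liu's valuation combinatorics.
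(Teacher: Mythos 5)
Your proof is correct and follows essentially the same route as the paper's: you define the largeness classes $\Acal_j$ exactly as in the paper, build the extension $d$ by duplicating reservoirs and stems across the $2u_n+1$ copies of $\Ical_n$ inside $\Ical_{n+1}$, take $\Ucal^{\Ical_{n+1}}_D$ to be the product of the $\Acal_j$, and verify the forcing on each $d^{[J]}$ by peeling off the copy-$a$ (resp.\ copy-$b$) coordinates and using $\Acal_a \subseteq \Ical_n \odot \Ucal^I_{\zeta_I(e_a,\sigma^i_I \cup \rho, x)}$ together with monotonicity of Mathias non-forcing. If anything, you are a bit more explicit than the paper in verifying $d \leq c$: the paper states the extension clause as $\Ucal^{\Ical_{n+1}}_D \subseteq \Ical_{n+1} \otimes \Ucal^{\Ical_n}_C$ and leaves implicit that this (together with the product structure) yields $\pi_J(\Ucal^{\Ical_{n+1}}_D) \subseteq J \otimes \pi_I(\Ucal^{\Ical_n}_C)$ for each pair $J \leq I$, whereas you spell out the factorization of $\pi_J$ over the disjoint copies, which is the clean way to see it.
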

\begin{proof}
Say $c = (\sigma^0_I, \sigma^1_I, X_\nu, C  : I \lhd \Ical_n, \nu \in \Ical_n)$.
For every $j \leq 2u_n$, the class
$$
\Acal_j = \Ucal^{\Ical_n}_C \cap \bigcap \{ \Ical_n \odot \Ucal^I_{\zeta_I(e_j, \sigma^i_I \cup \rho, x)} : I \in H, x \in \omega, \rho \subseteq A^i \cap \bigcup_{\nu \in I} X_\nu \}
$$
is a largeness class.
Let $D \subseteq \omega$ be a $\Delta^0_2$ set such that $\Ucal^{\Ical_{n+1}}_D$ is the class of all $\langle Z_{j^\frown \nu} : j \leq 2u_n, \nu \in \Ical_n \rangle$ such that for every $j \leq 2u_n$, $\langle Z_{j^\frown \nu} : \nu \in \Ical_n\} \in \Acal_j$. In particular, $\Ucal^{\Ical_{n+1}}_D$ is a largeness class.
For every $j^\frown \nu \in \Ical_{n+1}$, let $Y_{j^\frown \nu} = X_\nu$. For every $J \lhd \Ical_{n+1}$, let $\tau^0_J = \sigma^0_I$ and $\tau^1_J = \sigma^1_I$, where $I \lhd \Ical_n$ is the unique $n$-index such that $J \leq I$.

\smallskip
\emph{Claim 1}: $\Ucal^{\Ical_{n+1}}_D \subseteq \Lcal_{\langle Y_\mu : \mu \in \Ical_{n+1}\rangle}$. Let $\langle Z_{j^\frown \nu} : j \leq 2u_n, \nu \in \Ical_n \rangle \in \Ucal^{\Ical_{n+1}}_D$. For every $j \leq 2u_n$, $\langle Z_{j^\frown \nu} : \nu \in \Ical_n\} \in \Acal_j$. Since $\Acal_j \subseteq \Lcal_{\langle X_\nu : \nu \in \Ical_n\rangle}$, then $|Z_{j^\frown \nu} \cap X_\nu| = \infty$. Since $X_\nu = Y_{j^\frown \nu}$, then $|Z_{j^\frown \nu} \cap Y_{j^\frown \nu}| = \infty$, so $\langle Z_{j^\frown \nu} : j \leq 2u_n, \nu \in \Ical_n \rangle \in \Lcal_{\langle Y_\mu : \mu \in \Ical_{n+1}\rangle}$. This proves Claim 1.

\smallskip
\emph{Claim 2}: $\Ucal^{\Ical_{n+1}}_D \leq \Ucal^{\Ical_n}_C$.
We need to prove that $\Ucal^{\Ical_{n+1}}_D \subseteq \Ical_{n+1} \otimes \Ucal^{\Ical_n}_C$. Fix $\langle Z_{j^\frown \nu} : j \leq 2u_n, \nu \in \Ical_n \rangle \in \Ucal^{\Ical_{n+1}}_D$. Then for every $j \leq 2u_n$, $\langle Z_{j^\frown \nu} : \nu \in \Ical_n\rangle \in \Acal_j \subseteq \Ucal^{\Ical_n}_C$. Thus $\Ucal^{\Ical_{n+1}}_D \subseteq \Ical_{n+1} \otimes \Ucal^{\Ical_n}_C$. This proves Claim 2.

Let $d = (\tau^0_J, \tau^1_J, Y_\mu, D  : J \lhd \Ical_{n+1}, \mu \in \Ical_{n+1})$. In particular $d$ is a $\Pb_{n+1}$-condition extending $c$.
Fix $I \in H$ and $J \lhd \Ical_{n+1}$ such that $J \leq I$. In particular, there are some $a < b \leq 2u_n$ such that $J = \{a^\frown \nu : \nu \in I\} \cup \{b^\frown \nu : \nu \in I\}$.

\smallskip
\emph{Claim 3}:
$d^{[J]} \Vdash^i (\forall x)(\exists y)\neg \Phi_{e_a}(G, x, y)$
and
$d^{[J]} \Vdash^i (\forall x)(\exists y)\neg \Phi_{e_b}(G, x, y)$.
We prove that $d^{[J]} \Vdash^i (\forall x)(\exists y)\neg \Phi_{e_a}(G, x, y)$. The other case is symmetric. For every $x \in \omega$ and $\rho \subseteq A^i \cap \bigcup_{\mu \in J} Y_\mu$,
in particular $\rho \subseteq A^i \cap \bigcup_{\nu \in I} X_\nu$. Fix $\langle Z_\mu : \mu \in J\rangle \in \pi_J(\Ucal^{\Ical_{n+1}}_D)$. In particular $\langle Z_{a^\frown \nu} : \nu \in I\rangle \in \Acal_a \subseteq \Ucal^I_{\zeta_I(e_a, \sigma^i_I \cup \rho, x)}$. So $(\sigma^i_I \cup \rho, \bigcup_{\nu \in I}  Z_{a^\frown \nu}) \not \Vdash (\forall y)\Phi_{e_a}(G, x, y)$. As $\sigma^i_I = \tau^i_J$ and $\bigcup_{\nu \in I} Z_{a^\frown \nu} \subseteq \bigcup_{\mu \in J} Z_\mu$, then $(\tau^i_J \cup \rho, \bigcup_{\mu \in J}  Z_\mu) \not \Vdash (\forall y)\Phi_{e_a}(G, x, y)$. So $\langle Z_\mu : \mu \in J\rangle \in \Ucal^J_{\zeta_J(e_a, \tau^i_J \cup \rho, x)}$.
Thus for every $x \in \omega$ and $\rho \subseteq A^i \cap \bigcup_{\mu \in J} Y_\mu$, $\pi_J(\Ucal^{\Ical_{n+1}}_D) \subseteq \Ucal^J_{\zeta_J(e_a, \tau^i_J \cup \rho, x)}$. This is the definition of $d^{[J]} \Vdash^i (\forall x)(\exists y)\neg \Phi_{e_a}(G, x, y)$. This proves Claim 3 and \Cref{lem:forcing-question-pi-case}.
\end{proof}

\subsection{Requirements}

We now define the requirements specific to our purpose, namely, obtaining a set whose jump does not compute a $\{0,1\}$-valued completion of the partial function $n \mapsto \Phi^{\emptyset'}_n(n)$.

\begin{definition}Fix a $\Delta_0$ formula $\Phi_e(G, x, y, p)$ with free integer variables $x$ and $y$, and free valuation variable $p$.
\begin{itemize}
	\item[(1)] Let $c \in \Qb_n$ and $i < 2$. We say that \emph{$c$ forces the $e$-th requirement on side $i$} if $c \Vdash^i (\exists x)(\forall y)\Phi_e(G, x, y, p)$ for some $\emptyset'$-correct valuation $p$, or $c \Vdash^i (\forall x)(\exists y)\neg \Phi_e(G, x, y, p_0)$ and $c \Vdash^i (\forall x)(\exists y)\neg \Phi_e(G, x, y, p_1)$ for two incompatible valuations.
	\item[(2)] Let $c \in \Pb_n$ and $i < 2$. We say that  \emph{$c$ forces the $e$-th requirement on side $i$} if $c^{[I]}$ forces the $e$-th requirement on side $i$ for every $I \lhd \Ical_n$.
\end{itemize}
\end{definition}

Given a condition $c \in \Pb_n$, $e \in \omega$ and $i < 2$, let $H(c, e, i)$ be the set of $I \lhd \Ical_n$ such that $c$ does not force the $e$-th requirement on the $i$-th side.

\begin{lemma}\label{lem:satisfy-requirement-one-side}
For every $c \in \Pb_n$, $i < 2$ and $e \in \omega$ such that $H(c, e, i) \neq \emptyset$, there is $\Pb$-condition $d \leq c$ such that $|H(d, e, i)| < |H(c, e, i)|$.
\end{lemma}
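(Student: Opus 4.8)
The plan is to decide the $e$-th requirement on each branch of $c$ by a single application of Liu's combinatorial lemma, splitting into a $\Sigma^0_2$ outcome (which forces the requirement on one more branch) and a $\Pi^0_2$ outcome (which, after passing to $\Pb_{n+1}$, forces it on all branches at once). Write $c = (\sigma^0_I, \sigma^1_I, X_\nu, C : I \lhd \Ical_n, \nu \in \Ical_n)$ and put $H = H(c,e,i)$, which is nonempty by hypothesis. The $e$-th requirement is attached to a $\Delta_0$ formula $\Phi_e(G,x,y,p)$ with valuation parameter $p$; substituting a valuation for $p$ yields, uniformly in a code for $p$, a $\Delta_0$ formula $\Phi_e(G,x,y,p)$, and by the complexity lemma for the forcing question $\qvdash^i_H$, the set
$$
W = \{\, p : c \qvdash^i_H (\exists x)(\forall y)\Phi_e(G,x,y,p) \,\}
$$
is $\emptyset'$-c.e. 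I would then apply \Cref{lem:combi-liu-valuation-simplified} to $W$, and recall throughout that $d \le c$ in $\Pb$ implies $d^{[J]} \le c^{[I]}$ whenever $J \le I$ (in particular $d^{[I]}\le c^{[I]}$ at the same level), directly from the definition of the partial order on $\Pb$.

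\emph{First case: $W$ contains an $\emptyset'$-correct valuation $p$.} Then $c \qvdash^i_H (\exists x)(\forall y)\Phi_e(G,x,y,p)$, so \Cref{lem:forcing-question-sigma-case} yields $d \in \Pb_n$ with $d \le c$ and some $I_0 \in H$ such that $d^{[I_0]} \Vdash^i (\exists x)(\forall y)\Phi_e(G,x,y,p)$; since $p$ is $\emptyset'$-correct, $d^{[I_0]}$ forces the $e$-th requirement on side $i$, so $I_0 \notin H(d,e,i)$. For every $I \lhd \Ical_n$ with $I \notin H$ the condition $c^{[I]}$ already forces the $e$-th requirement on side $i$, and $d^{[I]} \le c^{[I]}$, so by \Cref{lem:forcing-relation-extension} $d^{[I]}$ forces it too. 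Hence $H(d,e,i) \subseteq H \setminus \{I_0\}$, and $|H(d,e,i)| < |H|$.

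\emph{Second case: $W$ contains no $\emptyset'$-correct valuation.} By \Cref{lem:combi-liu-valuation-simplified} applied with $k = 2u_n+1$, there are pairwise incompatible valuations $p_0, \dots, p_{2u_n} \notin W$, i.e.\ $c \nqvdash^i_H (\exists x)(\forall y)\Phi_e(G,x,y,p_j)$ for every $j \le 2u_n$. Feeding the $2u_n+1$ formulas $\Phi_e(G,x,y,p_0), \dots, \Phi_e(G,x,y,p_{2u_n})$ into \Cref{lem:forcing-question-pi-case} produces $d \in \Pb_{n+1}$ with $d \le c$ such that for every $I \in H$ and every $J \lhd \Ical_{n+1}$ with $J \le I$ there are $a < b \le 2u_n$ with $d^{[J]} \Vdash^i (\forall x)(\exists y)\neg \Phi_e(G,x,y,p_a)$ and $d^{[J]} \Vdash^i (\forall x)(\exists y)\neg \Phi_e(G,x,y,p_b)$; since $p_a, p_b$ are incompatible, $d^{[J]}$ forces the $e$-th requirement on side $i$. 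Now every $J \lhd \Ical_{n+1}$ refines a unique $I \lhd \Ical_n$: if $I \in H$ then $J \notin H(d,e,i)$ by the previous sentence, and if $I \notin H$ then $c^{[I]}$ forces the requirement and $d^{[J]} \le c^{[I]}$, so $d^{[J]}$ forces it by \Cref{lem:forcing-relation-extension}, whence again $J \notin H(d,e,i)$. Thus $H(d,e,i) = \emptyset$, and since $H \ne \emptyset$ we get $|H(d,e,i)| < |H|$.

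Everything rests on the previously established lemmas, so there is no single hard step; the points that need care are checking that $W$ is genuinely $\emptyset'$-c.e.\ (so that \Cref{lem:combi-liu-valuation-simplified} applies), that $2u_n+1$ is exactly the number of pairwise incompatible valuations \Cref{lem:forcing-question-pi-case} demands, and that downward closure of the forcing relations (\Cref{lem:forcing-relation-extension}) keeps all branches outside $H$ forced along the extension. I expect no obstacle beyond this bookkeeping.
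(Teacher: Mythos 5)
Your proof is correct and follows essentially the same route as the paper's: define $W$ via the forcing question, apply Liu's combinatorial lemma, and split into the $\Sigma^0_2$ outcome (handled by \Cref{lem:forcing-question-sigma-case}) and the $\Pi^0_2$ outcome with $2u_n+1$ pairwise incompatible valuations (handled by \Cref{lem:forcing-question-pi-case}). Your version is a touch more explicit than the paper about why branches already outside $H$ stay outside $H(d,e,i)$, via \Cref{lem:forcing-relation-extension} and the fact that $d \le c$ forces $d^{[J]} \le c^{[I]}$ for $J \le I$, but this is the same argument the paper leaves implicit.
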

\begin{proof}
Let $H = H(c, e, i)$ and $W$ be the set of all valuations $p$ such that $c \qvdash^i_H (\exists x)(\forall y)\Phi_e(G, x, y, p)$. By \Cref{lem:largeness-class-complexity}, the set $W$ is $\emptyset'$-c.e, so by \Cref{lem:combi-liu-valuation}, we have two cases.

Case 1: $p \in W$ for some $\emptyset'$-correct valuation $p$.
By definition of $W$, $c \qvdash^i_H (\exists x)(\forall y)\Phi_e(G, x, y, p)$.
By \Cref{lem:forcing-question-sigma-case}, there is a $\Pb_n$-condition $d \leq c$ such that $|H(d, e, i)| < |H(c, e, i)|$.

Case 2: $p_0, \dots, p_{2u_n} \not \in W$ for $2u_n+1$ pairwise incompatible valuations. So
$$
c \nqvdash^i_H (\exists x)(\forall y)\Phi_e(G, x, y, p_j)
$$ for every $j \leq 2u_n$. By \Cref{lem:forcing-question-pi-case}, there is a $\Pb_{n+1}$-condition $d \leq c$ such that $d^{[J]}$ forces the $e$-th requirement on side $i$ for every $J \lhd \Ical_n$ such that $J \leq I$ for some $I \in H = H(c, e, i)$. Therefore $|H(d, e, i)| = 0 < |H(c, e, i)|$.
\end{proof}

\begin{lemma}\label{lem:force-requirements-everywhere}
For every $c \in \Pb$ and $e \in \omega$, there is $\Pb$-condition $d \leq c$ forcing the $e$-th requirement on both sides.
\end{lemma}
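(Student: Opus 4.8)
The plan is to obtain $d$ by finitely iterating \Cref{lem:satisfy-requirement-one-side}, first on side $0$ and then on side $1$, using \Cref{lem:forcing-relation-extension} to guarantee that the second phase does not undo the progress made in the first.

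First I would record the key monotonicity fact: \emph{if $c' \in \Pb$ forces the $e$-th requirement on side $i$ and $d' \leq c'$, then $d'$ forces the $e$-th requirement on side $i$}; in particular $H(c', e, i) = \emptyset$ implies $H(d', e, i) = \emptyset$. To see this, fix any $J \lhd \Ical_m$ with $d' \in \Pb_m$, and let $I \lhd \Ical_n$ (with $c' \in \Pb_n$) be an $n$-index with $J \leq I$. By the definition of the partial order on $\Pb$ we have $d'^{[J]} \leq c'^{[I]}$ as $\Qb$-conditions. Since $c'$ forces the $e$-th requirement on side $i$, the condition $c'^{[I]}$ either satisfies $c'^{[I]} \Vdash^i (\exists x)(\forall y)\Phi_e(G, x, y, p)$ for some $\emptyset'$-correct valuation $p$, or satisfies $c'^{[I]} \Vdash^i (\forall x)(\exists y)\neg\Phi_e(G, x, y, p_j)$ for $j < 2$ and two incompatible valuations $p_0, p_1$; by \Cref{lem:forcing-relation-extension} the same statement transfers to $d'^{[J]}$. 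As $J$ was arbitrary, $d'$ forces the $e$-th requirement on side $i$.

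The main argument is then a finite iteration. Starting from $c_0 = c$, as long as $H(c_k, e, 0) \neq \emptyset$ I would apply \Cref{lem:satisfy-requirement-one-side} on side $0$ to obtain $c_{k+1} \leq c_k$ with $|H(c_{k+1}, e, 0)| < |H(c_k, e, 0)|$. Since these are natural numbers, after finitely many steps we reach a condition $c'$ with $H(c', e, 0) = \emptyset$, i.e.\ $c'$ forces the $e$-th requirement on side $0$. Starting now from $c'$, the same iteration on side $1$ yields, again after finitely many steps, a condition $d \leq c'$ with $H(d, e, 1) = \emptyset$. By the monotonicity fact together with $H(c', e, 0) = \emptyset$, we still have $H(d, e, 0) = \emptyset$, so $d \leq c' \leq c$ forces the $e$-th requirement on both sides.

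The only delicate point — and essentially the whole content of the argument — is that the second phase (working on side $1$) cannot resurrect a branch on which the requirement has failed on side $0$; this is precisely what the monotonicity fact, hence \Cref{lem:forcing-relation-extension}, supplies. Everything else is bookkeeping: termination of each phase follows from the strict decrease of a natural-number-valued quantity, and the passage from ``every branch $c'^{[I]}$ forces the requirement on side $i$'' to ``$c'$ forces the requirement on side $i$'' is just the definition.
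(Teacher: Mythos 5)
Your proof is correct and follows the same two-phase iteration as the paper: clear side $0$ via repeated applications of \Cref{lem:satisfy-requirement-one-side}, then clear side $1$. The paper's own proof is terser and leaves the monotonicity fact implicit; you correctly identified it as the load-bearing observation that keeps phase two from undoing phase one, and your derivation of it from \Cref{lem:forcing-relation-extension} (via $d'^{[J]} \leq c'^{[I]}$ whenever $d' \leq c'$ in $\Pb$ and $J \leq I$) is exactly right. That same monotonicity is also what the paper silently relies on in Case 2 of \Cref{lem:satisfy-requirement-one-side} when it concludes $|H(d,e,i)| = 0$, so spelling it out is a genuine improvement in rigor rather than unnecessary bookkeeping.
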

\begin{proof}
Apply iteratively \Cref{lem:satisfy-requirement-one-side} to obtain a condition $d_0$ such that $H(d_0, e, 0) = \emptyset$. Then apply again iteratively \Cref{lem:satisfy-requirement-one-side} below $d_0$ to obtain an extension $d_1$ such that $H(d_1, e, 1) = \emptyset$. The condition $d_1$ is the desired extension.
\end{proof}

\subsection{Construction}

As explained, a $\Pb$-condition represents multiple parallel $\Qb$-conditions.
By \Cref{lem:every-condition-valid-side}, every $\Pb$-condition admits a branch with a valid side. Moreover, by \Cref{lem:valid-upward-closed}, the valid sides of $\Qb$-conditions are upward-closed under the extension relation. This motivates the following definition.

\begin{definition}
A \emph{path} through a $\Pb$-filter $\Fcal$ is a pair $\langle P, i \rangle$
where $i < 2$ and for every $n \in \omega$, $P(n) \lhd \Ical_n$ is such that $P(n+1) \leq P(n)$ and for every $c \in \Fcal \cap \Pb_n$, $c^{[P(n)]}$ is $i$-valid.
\end{definition}

By \Cref{lem:every-condition-valid-side} and \Cref{lem:valid-upward-closed},
every $\Pb$-filter admits a path. We then let
$$
\Fcal(P, i) = \bigcup \{ \sigma^i_{P(n)} : (\sigma^0_I, \sigma^1_I, X_\nu, C  : I \lhd \Ical_n, \nu \in \Ical_n) \in \Fcal \}
$$
We can prove that the forced formulas hold along any path.

\begin{lemma}\label{lem:forced-formula-holds}
Let $\Fcal$ be a sufficiently generic $\Pb$-filter, and let $\langle P, i \rangle$ be a path through $\Fcal$ and let $G^i = \Fcal(P, i)$.
Let $\Phi_e(G, x, y)$ be a $\Delta_0$ formula and $c \in \Fcal$.
\begin{itemize}
	\item[(1)] If $c^{[P(n)]} \Vdash^i (\exists x)(\forall y)\Phi_e(G, x, y)$, then $(\exists x)(\forall y)\Phi_e(G^i, x, y)$ holds.
	\item[(2)] If $c^{[P(n)]} \Vdash^i (\forall x)(\exists y)\neg \Phi_e(G, x, y)$, then $(\forall x)(\exists y)\neg \Phi_e(G^i, x, y)$ holds.
\end{itemize}
\end{lemma}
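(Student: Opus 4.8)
The plan is to prove the two clauses separately: clause~(1) holds for \emph{every} filter containing $c$, and only clause~(2) uses the hypothesis that $\Fcal$ is sufficiently generic. Throughout, write $c = (\sigma^0_I, \sigma^1_I, X_\nu, C : I \lhd \Ical_n, \nu \in \Ical_n) \in \Pb_n$, so that $c^{[P(n)]} = (\sigma^0_{P(n)}, \sigma^1_{P(n)}, X_\nu, \pi_{P(n)}(\Ucal^{\Ical_n}_C) : \nu \in P(n))$, and record the one structural fact used in both cases: for any $d \in \Fcal$ below $c$, say $d \in \Pb_m$, we have $P(m) \le P(n)$ (as $P(m) \le P(m-1) \le \dots \le P(n)$ by the definition of a path), so by the definition of the partial order on $\Pb$, $d^{[P(m)]} \le c^{[P(n)]}$ in $\Qb$.

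For clause~(1), unfold the forcing relation: there is some $x_0$ with $(\sigma^i_{P(n)}, \bigcup_{\nu \in P(n)} X_\nu) \Vdash (\forall y)\Phi_e(G, x_0, y)$. By the structural fact together with \Cref{lem:qb-extension-compatible-Mathias-extension}, the side-$i$ stem of $d^{[P(m)]}$ Mathias-extends $(\sigma^i_{P(n)}, \bigcup_{\nu \in P(n)} X_\nu)$ for every $d \in \Fcal$ below $c$; hence $\sigma^i_{P(n)} \prec G^i$ and $G^i \setminus \sigma^i_{P(n)} \subseteq \bigcup_{\nu \in P(n)} X_\nu$. Since $\Phi_e$ is $\Delta_0$, for each fixed $y$ the value of $\Phi_e(G^i, x_0, y)$ is decided by a finite initial segment $\sigma^i_{P(n)} \cup \rho \prec G^i$ with $\rho \subseteq \bigcup_{\nu \in P(n)} X_\nu$ finite, and the Mathias condition forces $\Phi_e(\sigma^i_{P(n)} \cup \rho, x_0, y)$; thus $(\forall y)\Phi_e(G^i, x_0, y)$, which gives $(\exists x)(\forall y)\Phi_e(G^i, x, y)$.

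For clause~(2), the structural fact and \Cref{lem:forcing-relation-extension}(2) give that $d^{[P(m)]} \Vdash^i (\forall x)(\exists y)\neg\Phi_e(G, x, y)$ for every $d \in \Fcal$ below $c$, and by the definition of a path each such $d^{[P(m)]}$ is $i$-valid. Fix $x_0 \in \omega$. The plan is to show that the set $D_{x_0}$ of $\Pb$-conditions $d \in \Pb_m$ such that \emph{for every $i$-valid branch $J \lhd \Ical_m$ of $d$ with $d^{[J]} \Vdash^i (\forall x)(\exists y)\neg\Phi_e(G, x, y)$ there is some $y$ with $\neg\Phi_e(\sigma^i_J, x_0, y)$} is dense below $c$. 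Given $d_0 \le c$, one processes the finitely many branches $J$ of $d_0$ one at a time: a branch that is not $i$-valid, or does not force the formula, or whose side-$i$ stem already witnesses $x_0$, is left untouched; for a branch $J$ that is $i$-valid and forces the formula, \Cref{lem:valid-side-progress-pi02} yields a $\Qb_m$-extension of $d_0^{[J]}$ whose side-$i$ stem witnesses $\neg\Phi_e(\cdot, x_0, y)$ for some $y$, and \Cref{lem:compatibility-extension-qb-pb} lifts it to a $\Pb_m$-extension of $d_0$ freezing all other branches; by \Cref{lem:forcing-relation-extension} the formula is still forced on the modified branch, and branches already processed or frozen are unaffected. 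The resulting $d \le d_0$ lies in $D_{x_0}$. As $\Fcal$ is sufficiently generic and $c \in \Fcal$, pick $d \in \Fcal \cap D_{x_0}$ with $d \le c$, $d \in \Pb_m$; then $d^{[P(m)]}$ is $i$-valid and forces the formula, so $P(m)$ is an $i$-valid branch of $d$ and $\sigma^i_{P(m)} \prec G^i$ witnesses $\neg\Phi_e(G^i, x_0, y)$ for some $y$ (using that $\Delta_0$ formulas are decided by finite initial segments). As $x_0$ was arbitrary, $(\forall x)(\exists y)\neg\Phi_e(G^i, x, y)$ holds.

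I expect the delicate point to be the density of $D_{x_0}$ in clause~(2). Applying \Cref{lem:valid-side-progress-pi02} to a branch truncates its reservoirs by finitely many elements and may therefore destroy the $i$-validity of that branch; one must check that this does not break the argument. It does not: if the modified branch ceases to be $i$-valid, the membership requirement of $D_{x_0}$ on it becomes vacuous, and, since $i$-validity is upward-closed (\Cref{lem:valid-upward-closed}), no later extension can restore it. This is also precisely why $D_{x_0}$ must be formulated in terms of the valid branches only --- on an invalid branch there is no way to extend the side-$i$ stem using elements from $A^i$, so no progress toward $x_0$ can be forced there. The remaining ingredients --- clause~(1), and the two ``descent along the path'' observations --- reduce directly to the combinatorial and forcing lemmas already established and should be routine.
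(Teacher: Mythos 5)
Your proof is correct and follows the paper's approach: clause~(1) is the direct unfolding of the $\Sigma^0_2$ forcing relation, and clause~(2) assembles \Cref{lem:forcing-relation-extension}, \Cref{lem:valid-side-progress-pi02} and \Cref{lem:compatibility-extension-qb-pb} together with genericity, just as the paper does. The paper's proof of~(2) is essentially a one-liner citing these three lemmas, so your explicit construction of the dense set $D_{x_0}$---processing branches one at a time, lifting each $\Qb$-extension back to $\Pb$ while freezing the other stems, and noting that the subtlety you flag (truncating reservoirs via \Cref{lem:valid-side-progress-pi02} can destroy $i$-validity, but by \Cref{lem:valid-upward-closed} lost validity is irrecoverable and merely makes the membership test vacuous) is genuine and correctly handled---is exactly the argument the paper leaves implicit.
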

\begin{proof}
Say $c^{[P(n)]} = (\sigma^0, \sigma^1, X_\nu, \Acal  : \nu \in P(n)) \in \Qb_n$.

(1) By definition of $c^{[P(n)]} \Vdash^i (\exists x)(\forall y)\Phi_e(G, x, y)$, then there is some $x \in \omega$ such that $(\sigma^i, \bigcup_{\nu \in P(n)} X_\nu) \Vdash (\forall y)(G^i, x, y)$. In particular, $\sigma^i \prec G^i$ and $G^i - \sigma^i \subseteq \bigcup_{\nu \in P(n)} X_\nu$, so for every $y \in \omega$, $\neg \Phi(G^i, x, y)$ holds.

(2) By \Cref{lem:forcing-relation-extension}, \Cref{lem:valid-side-progress-pi02} and \Cref{lem:compatibility-extension-qb-pb}, for every $x \in \omega$, there is some $m \in \omega$ and $d \in \Fcal \cap \Pb_m$ such that $d^{[P(m)]} = (\tau^0, \tau^1, Y_\mu, \Bcal  : \mu \in P(m))$ and $(\tau^i, \bigcup_{\mu \in P(m)} Y_\mu) \Vdash (\exists y)\neg \Phi_e(G, x, y)$. In particular, $\tau^i \prec G^i$, so $(\exists y)\Phi_e(G^i, x, y)$ holds.
\end{proof}

We are now ready to prove \Cref{thm:jump-pa-avpoidance-d22}.

\begin{proof}[Proof of \Cref{thm:jump-pa-avpoidance-d22}]
We prove the theorem for $Z = \emptyset$, as the whole argument relativizes.
Fix a $\Delta^0_2$ set $A$ and let $A^0 = A$ and $A^1 = \overline{A}$.
Let $\Fcal$ be a sufficiently generic $\Pb$-filter. Let $\langle P, i\rangle$ be a path through $\Fcal$. Let $G^i = \Fcal(P, i)$. By definition of a $\Pb$-condition, $G^i \subseteq A^i$. By \Cref{lem:valid-side-infinite} and \Cref{lem:compatibility-extension-qb-pb}, $G^i$ is infinite.

By \Cref{lem:force-requirements-everywhere}, for every $e \in \omega$, there is some $n \in \omega$ and some $c \in \Fcal \cap \Pb_n$ such that $c$ forces the $e$-th requirement both sides. In particular, $c^{[P(n)]}$ forces the $e$-th requirement on side~$i$. By \Cref{lem:forced-formula-holds}, the $e$-th requirement holds on $G^i$. By \Cref{lem:jump-pa-reformulated}, the jump of $G^i$ is not PA over~$\emptyset'$. This completes the proof of \Cref{thm:jump-pa-avpoidance-d22}.
\end{proof}

%
%


\vspace{0.5cm}

\bibliographystyle{plain}
\bibliography{bibliography}

\begin{thebibliography}{10}

\bibitem{Cholak2019Some}
Peter~A. Cholak, Damir~D. Dzhafarov, Denis~R. Hirschfeldt, and Ludovic Patey.
\newblock Some results concerning the $\mathsf{SRT}^2_2$ vs. $\mathsf{COH}$
  problem, 2019.

\bibitem{Cholak2001strength}
Peter~A. Cholak, Carl~G. Jockusch, and Theodore~A. Slaman.
\newblock {On the strength of Ramsey's theorem for pairs}.
\newblock {\em Journal of Symbolic Logic}, 66(01):1--55, 2001.

\bibitem{Chong2010role}
C.~Chong, Steffen Lempp, and Yue Yang.
\newblock {On the role of the collection principle for $\Sigma^0_2$-formulas in
  second-order reverse mathematics}.
\newblock {\em Proceedings of the American Mathematical Society},
  138(3):1093--1100, 2010.

\bibitem{Chong2014metamathematics}
Chitat Chong, Theodore Slaman, and Yue Yang.
\newblock {The metamathematics of stable {R}amsey's theorem for pairs}.
\newblock {\em Journal of the American Mathematical Society}, 27(3):863--892,
  2014.

\bibitem{Dorais2016uniform}
Fran{\c{c}}ois~G. Dorais, Damir~D. Dzhafarov, Jeffry~L. Hirst, Joseph~R.
  Mileti, and Paul Shafer.
\newblock On uniform relationships between combinatorial problems.
\newblock {\em Trans. Amer. Math. Soc.}, 368(2):1321--1359, 2016.

\bibitem{Downey200102}
Rod Downey, Denis~R. Hirschfeldt, Steffen Lempp, and Reed Solomon.
\newblock A {$\Delta^0_2$} set with no infinite low subset in either it or its
  complement.
\newblock {\em Journal of Symbolic Logic}, 66(3):1371--1381, 2001.

\bibitem{Dzhafarov2019COH}
Damir Dzhafarov and Ludovic Patey.
\newblock Coh, srt22, and multiple functionals, 2019.

\bibitem{Dzhafarov2014Cohesive}
Damir~D. Dzhafarov.
\newblock Cohesive avoidance and strong reductions.
\newblock {\em Proceedings of the American Mathematical Society},
  143(2):869--876, 2014.

\bibitem{Dzhafarov2016Strong}
Damir~D. Dzhafarov.
\newblock Strong reductions between combinatorial principles.
\newblock {\em J. Symb. Log.}, 81(4):1405--1431, 2016.

\bibitem{Elwes2013Math}
Richard Elwes.
\newblock {\em Math in 100 key breakthroughs}.
\newblock Quercus, New York, 2013.

\bibitem{Hirschfeldt2015Slicing}
Denis~R. Hirschfeldt.
\newblock {\em Slicing the truth}, volume~28 of {\em Lecture Notes Series.
  Institute for Mathematical Sciences. National University of Singapore}.
\newblock World Scientific Publishing Co. Pte. Ltd., Hackensack, NJ, 2015.
\newblock On the computable and reverse mathematics of combinatorial
  principles, Edited and with a foreword by Chitat Chong, Qi Feng, Theodore A.
  Slaman, W. Hugh Woodin and Yue Yang.

\bibitem{Hirschfeldt2016notions}
Denis~R. Hirschfeldt and Carl~G. Jockusch.
\newblock On notions of computability-theoretic reduction between {$\Pi\sb 2\sp
  1$} principles.
\newblock {\em J. Math. Log.}, 16(1):1650002, 59, 2016.

\bibitem{Hirschfeldt2008strength}
Denis~R. Hirschfeldt, Carl~G. Jockusch, Bj{\o}rn Kjos-Hanssen, Steffen Lempp,
  and Theodore~A. Slaman.
\newblock {The strength of some combinatorial principles related to {R}amsey's
  theorem for pairs}.
\newblock {\em Computational Prospects of Infinity, Part II: Presented Talks,
  World Scientific Press, Singapore}, pages 143--161, 2008.

\bibitem{Jockusch1972Ramseys}
Carl~G. Jockusch.
\newblock Ramsey's theorem and recursion theory.
\newblock {\em Journal of Symbolic Logic}, 37(2):268--280, 1972.

\bibitem{Jockusch197201}
Carl~G. Jockusch and Robert~I. Soare.
\newblock {$\Pi^0_1$} classes and degrees of theories.
\newblock {\em Transactions of the American Mathematical Society}, 173:33--56,
  1972.

\bibitem{Jockusch1993cohesive}
Carl~G. Jockusch and Frank Stephan.
\newblock A cohesive set which is not high.
\newblock {\em Mathematical Logic Quarterly}, 39(1):515--530, 1993.

\bibitem{Liu2012RT22}
Lu~Liu.
\newblock {RT$^2_2$ does not imply WKL$_0$}.
\newblock {\em Journal of Symbolic Logic}, 77(2):609--620, 2012.

\bibitem{Liu2015Cone}
Lu~Liu.
\newblock Cone avoiding closed sets.
\newblock {\em Transactions of the American Mathematical Society},
  367(3):1609--1630, 2015.

\bibitem{Mileti2004Partition}
Joseph~Roy Mileti.
\newblock {\em Partition theorems and computability theory}.
\newblock ProQuest LLC, Ann Arbor, MI, 2004.
\newblock Thesis (Ph.D.)--University of Illinois at Urbana-Champaign.

\bibitem{Monin2018Pigeons}
Benoit Monin and Ludovic Patey.
\newblock Pigeons do not jump high.
\newblock To appear. Available at \url{https://arxiv.org/abs/1803.09771}, 2018.

\bibitem{Patey2016weakness}
Ludovic Patey.
\newblock The weakness of being cohesive, thin or free in reverse mathematics.
\newblock {\em Israel J. Math.}, 216(2):905--955, 2016.

\bibitem{Patey2017Controlling}
Ludovic Patey.
\newblock Controlling iterated jumps of solutions to combinatorial problems.
\newblock {\em Computability}, 6(1):47--78, 2017.

\bibitem{Patey2019Ramsey}
Ludovic Patey.
\newblock Ramsey-like theorems and moduli of computation, 2019.

\bibitem{Seetapun1995strength}
David Seetapun and Theodore~A. Slaman.
\newblock {On the strength of {R}amsey's theorem}.
\newblock {\em Notre Dame Journal of Formal Logic}, 36(4):570--582, 1995.

\bibitem{Simpson1988Partial}
Stephen~G. Simpson.
\newblock Partial realizations of {H}ilbert's {P}rogram.
\newblock {\em J. Symbolic Logic}, 53(2):349--363, 1988.

\bibitem{Simpson2009Subsystems}
Stephen~G. Simpson.
\newblock {\em {Subsystems of Second Order Arithmetic}}.
\newblock Cambridge University Press, 2009.

\bibitem{Wang2014Cohesive}
Wei Wang.
\newblock Cohesive sets and rainbows.
\newblock {\em Annals of Pure and Applied Logic}, 165(2):389--408, 2014.

\end{thebibliography}

\end{document}